\font\tencmmib=cmmib10 \skewchar\tencmmib '60
\def\lessim{\ \lower4pt\hbox{$
		\buildrel{\displaystyle <}\over\sim$}\ }
\def\gessim{\ \lower4pt\hbox{$\buildrel{\displaystyle >}
		\over\sim$}\ }
\def\vX{\mathbf{X}}
\def\la{\langle}
\def\ra{\rangle}
\def\bY{\boldsymbol{Y}}
\newcommand{\bF}{\mathcal{F}}
\newcommand{\bC}{\mathcal{C}}
\newcommand{\bL}{\mathcal{L}}
\newcommand{\bX}{\mathbf{X}}
\newcommand{\bx}{\boldsymbol{x}}
\newcommand{\by}{\boldsymbol{y}}
\newcommand{\e}{\mathbb{E}}
\newcommand{\p}{\mathbb{P}}
\newcommand{\bW}{\boldsymbol{W}}
\newtheorem{lemma}{\bf Lemma}
\newtheorem{definition}{\bf Definition}
\newtheorem{theorem}{\bf Theorem}
\newtheorem{proposition}{\bf Proposition}
\newenvironment{Proof of lemma}{\noindent{\bf Proof of Lemma}}{\hfill$\Box$\newline}
\newenvironment{Proof of theorem}{\noindent{\bf Proof of Theorem}}{\hfill{\footnotesize${\square}$}\newline}
\newenvironment{Proof of theorems}{\noindent{\bf Proof of Theorems}}{\hfill$\Box$\newline}
\newenvironment{Proof of proposition}{\noindent{\bf Proof of Proposition}}{\hfill$\Box$\newline}
\newenvironment{Proof of propositions}{\noindent{\bf Proof of Propositions}}{\hfill$\Box$\newline}
\newenvironment{Proof of exercise}{\noindent{\it Proof of Exercise:}}{\hfill$\Box$}
\begin{document}
	
	
	\title{On the energy landscape of the mixed even $p$-spin model}
	
	\author{
		Wei-Kuo Chen\thanks{School of Mathematics, University of Minnesota. Email: wkchen@umn.edu}
		\and
		Madeline Handschy\thanks{School of Mathematics. University of Minnesota. Email: hands014@umn.edu}
		\and
		Gilad Lerman\thanks{School of Mathematics, University of Minnesota. Email: lerman@math.umn.edu}
	}
	\maketitle
	
	\begin{abstract}		
	We investigate the energy landscape of the mixed even $p$-spin model with Ising spin configurations. We show that for any given energy level between zero and the maximal energy, with overwhelming probability there exist exponentially many distinct spin configurations such that their energies stay near this energy level. Furthermore, their magnetizations and overlaps are concentrated around some fixed constants. In particular, at the level of maximal energy, we prove that the Hamiltonian exhibits exponentially many orthogonal peaks. This improves the results of Chatterjee \cite{chatt} and Ding-Eldan-Zhai \cite{DEZ}, where the former established a logarithmic size of the number of the orthogonal peaks, while the latter proved a polynomial size. Our second main result obtains disorder chaos at zero temperature and at any external field. As a byproduct, this implies that the fluctuation of the maximal energy is superconcentrated when the external field vanishes 
	and obeys a Gaussian limit law when the external field is present.
\end{abstract}

{\bf AMS 2000 subject classifications.} 60K35, 60G15, 82B44

{\bf Keywords and phrases.} Disorder chaos, energy landscape, multiple peaks, Parisi formula, Sherrington-Kirkpatrick model

\tableofcontents
	
\section{Introduction}

Initially invented  by theoretical physicists, spin glass models are disordered spin systems created to explain the strange magnetic behavior of certain alloys. These models are typically formulated as families of highly correlated random variables, called Hamiltonians or energies in physics, indexed by configuration spaces with given metrics. This framework has also been extended to a variety of disordered models in other scientific disciplines including computer science~\cite{Bhamidi+2012, Dembo_max_cut_spin_glass_2015, Gamarnik2016,Montanari_spin_book2012} and neural networks~\cite{Amit+NN85, ChoromanskaHMAL15, LEE1986276, Peretto1986}.

Understanding the energy landscape of spin glass systems is a challenging endeavor. When the Hamiltonian of a spin glass model is defined as a Gaussian field indexed by a differentiable manifold, it is natural to study the distribution of the extrema and critical points of the Hamiltonian. Over the past two decades, physicists have intensively investigated such problems for the spherical pure $p$-spin model; see Charbonneau et al. \cite{CKPUZ}, Fyodorov-William \cite{F0}, Fydorov \cite{F1}, and Kurchan-Parisi-Virasoro \cite{KMV}. Rigorous mathematical results for the spherical pure $p$-spin and mixed $p$-spin models have also appeared in recent years. The description of complexity was presented in Auffinger-Ben Arous \cite{AB}, Auffinger-Ben Arous-Cerny \cite{ABC}, and Subag \cite{S1}, the statistics were studied in Subag \cite{S2} and Subag-Zeitouni \cite{SZ}, and the formula for the maximal energy was obtained in Chen-Sen \cite{ArnabChen15} and Jagannath-Tobasco \cite{JT}. 
 
Studies related to energy statistics of spin glass models with discrete configuration spaces appear in Bovier-Kurkova \cite{BKU041,BKU042,BK05,BK06,BKU06,BK07}, Bovier-Klimovsky \cite{BKL08}, and Ben Arous-Gayrard-Kuptsov~\cite{GGV}. These works consider a wide class of examples including the random energy model, the generalized random energy model, the Sherrington-Kirkpatrick (SK) model, the pure $p$-spin model and the random partition problems. More recently, Chatterjee \cite{chatt} revolutionized the study of energy landscapes of spin glass models by establishing the so-called {\it superconcentration}, {\it disorder chaos}, and {\it multiple peaks} via an interpolation method for general discrete Gaussian fields. In the context of the SK model, his results imply that near the maximal energy the system exhibits multiple peaks in the sense that there exist logarithmically many spin configurations around the maximal energy and they are nearly orthogonal to each other. This picture extends to any energy level. Another paper of Chatterjee \cite{chatt1} establishes relationships between superconcentration, disorder chaos, and multiple peaks for more general Gaussian fields than spin glass models. Chatterjee's general results \cite{chatt1,chatt,chattbook} on multiple peaks were improved by Ding-Eldan-Zhai \cite{DEZ} and as a byproduct, they showed that the number of orthogonal peaks in the SK model is at least of polynomial order. 

Theoretical physicists believe that the multiple peaks discussed above occupy a large portion of the configuration space (see, e.g., M\'{e}zard-Parisi-Virasoro \cite{MPV}). The main goal of this paper is to establish a description of this belief for the SK model as well as the mixed $p$-spin model. We show that for any given energy level between zero and the maximal energy, one can construct exponentially many spin configurations with energies around the given level. Furthermore, they possess the same magnetization and are equidistant from each other. In particular, at the level of maximal energy, our result strengthens the aforementioned results in \cite{chattbook,DEZ} by showing that the number of orthogonal peaks is of exponential order.

\subsection{The mixed even $p$-spin model}
We first review the mixed even $p$-spin model with Ising spin configurations. For any integer $N\geq 1$, let $\Sigma_N:=\{-1,+1\}^N$ be the set of spin configurations. The Hamiltonian of the mixed even $p$-spin model is defined as
$$
H_N(\sigma)=\sum_{p\in 2\mathbb{N}}c_pH_{N,p}(\sigma)
$$
for $\sigma\in\Sigma_N,$ where the sum is over all even $p$ and $H_{N,p}$ is the pure $p$-spin Hamiltonian defined by
$$
H_{N,p}(\sigma)=\frac{1}{N^{(p-1)/2}}\sum_{1\leq i_1,\ldots,i_p\leq N}g_{i_1,\ldots,i_p}\sigma_{i_1}\cdots\sigma_{i_p}.
$$
Here, for all $1\leq i_1,\ldots,i_p\leq N$ and $p\in 2\mathbb{N},$ $g_{i_1,\ldots,i_p}$'s are i.i.d. standard Gaussian random variables  We assume that the sequence $(c_p)_{p\in 2\mathbb{N}}$ satisfies $c_p\neq 0$ for at least one $p$ and it decays fast enough, for instance, $\sum_{p\in 2\mathbb{N}}2^pc_p^2<\infty$, so that the infinite sum $H_N$ converges a.s.
Under these assumptions, one readily computes that
$$
\e H_{N}(\sigma^1)H_N(\sigma^2)=N\xi(R(\sigma^1,\sigma^2)),
$$
where
\begin{align}
\label{xi}
\xi(s):=\sum_{p\in 2\mathbb{N}}c_p^2s^p
\end{align}
and $$R(\sigma^1,\sigma^2):=\frac{1}{N}\sum_{i=1}^N\sigma_i^1\sigma_i^2$$ is the overlap between $\sigma^1$ and $\sigma^2$. The classical SK model corresponds to $\xi(s)=s^2/2$. If one replaces the space $\Sigma_N$ by the sphere $\{\sigma\in\mathbb{R}^N:\sum_{i=1}^N\sigma_i^2=N\}$, then the above Hamiltonian $H_N$ is called the spherical mixed even $p$-spin model. 

The mixed even $p$-spin model with external field is defined by
$$
H_N^h(\sigma)=H_N(\sigma)+hNm_N(\sigma),
$$
where $h\geq 0$ denotes the strength of the external field and $$
m_N(\sigma):=\frac{1}{N}\sum_{i=1}^N\sigma_i
$$
is called the magnetization of $\sigma.$ Denote by $L_N^h$ the maximal energy of $H_N^h$, that is,
$$
L_N^h:=\max_{\sigma\in\Sigma_N}\frac{H_N^h(\sigma)}{N}.
$$
Here, unlike the tradition in physics, we consider the maximum rather than the minimum of the energy. Nonetheless, they differ essentially  only by a negative sign.
It is well-known (see, for instance, \cite{Pan}) that for all $h\in \mathbb{R},$
\begin{align}
\label{add:eq2}
M(h):=\lim_{N\rightarrow\infty}L_N^h
\end{align}
exists and is equal to a nonrandom number a.s., which can be computed through the Parisi formula for the free energy at positive temperature. Recently, Auffinger-Chen \cite{AC15} proved that $M(h)$ can also be expressed as a Parisi-type formula for the limiting maximal energy $L_N^h$ (see \eqref{parisi} below).

\subsection{Energy landscape}

We study the energy landscape of the Hamiltonian $H_N$. In order to investigate the behavior of the spin configurations at different energy levels, we introduce an auxiliary function,
\begin{align}\label{eq-5}
E(h):=M(h)-hM'(h),\,\,\forall h\geq 0,
\end{align}
where the differentiability of $M$ is guaranteed by Proposition \ref{sec5:prop1} in Subsection \ref{sec:prop-0}.
 The following proposition states that by varying $h$ between zero and infinity, the function $E(h)$ continuously scans all energy levels of $H_N$  between zero and its maximal energy.

\begin{proposition}\label{prop-0}
$E(h)$ is  nonincreasing and continuous with $E(0)=M(0)$ and $\lim_{h\rightarrow\infty}E(h)=0$.
\end{proposition}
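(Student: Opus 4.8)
The plan is to extract all the analytic structure of $E(h)=M(h)-hM'(h)$ from the known properties of $M$. First I would record what is available about $M$. Since $M(h)=\lim_N L_N^h$ and $L_N^h=\max_\sigma (H_N(\sigma)+hNm_N(\sigma))/N$ is a maximum of affine functions of $h$, each realization $L_N^h$ is convex in $h$, hence $M$ is convex on $[0,\infty)$; it is also even (replacing $\sigma$ by $-\sigma$ sends $m_N$ to $-m_N$ and leaves $H_N$ equal in law), so $M'(0^+)\ge 0$, and $M$ is nondecreasing on $[0,\infty)$. Convexity gives that $M$ is differentiable except at countably many points, but Proposition \ref{sec5:prop1} (cited in the excerpt) upgrades this to differentiability everywhere on $[0,\infty)$, so $M'$ is well defined, continuous, and nondecreasing. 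This is the input I would use without reproach.

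Next I would deal with the three claims in turn. \emph{Monotonicity:} for $0\le h_1<h_2$, convexity of $M$ gives $M'(h_1)\le M'(h_2)$, and a one-line computation — either differentiating $E$ formally to get $E'(h)=-hM''(h)\le 0$ in the a.e.\ sense, or more cleanly writing, via the mean value / convexity inequality, $E(h_2)-E(h_1) = \bigl(M(h_2)-M(h_1)\bigr) - h_2M'(h_2)+h_1M'(h_1) \le (h_2-h_1)M'(h_2) - h_2M'(h_2)+h_1M'(h_1) = -h_1\bigl(M'(h_2)-M'(h_1)\bigr)\le 0$ — shows $E$ is nonincreasing. \emph{Continuity:} $E$ is continuous because $M$ is continuous (convex functions are continuous on the interior, and continuity at $0$ follows from convexity plus the finiteness of $M(0)$) and $M'$ is continuous by the cited proposition; the product $hM'(h)$ is then continuous. \emph{Value at zero:} $E(0)=M(0)-0\cdot M'(0)=M(0)$, immediate.

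The one genuinely substantive point is $\lim_{h\to\infty}E(h)=0$. Here I would argue as follows. Write $M(h)-M(0)=\int_0^h M'(t)\,dt$. Since $M$ is convex and $M'$ nondecreasing, $M'(t)\to \ell$ for some $\ell\in(0,\infty]$ as $t\to\infty$. The key estimate is that $M(h)=hM'(h)+o(h)$ cannot hold unless one controls the growth rate of $M$; concretely, I claim $M(h)/h\to \ell$ and moreover $M(h)-hM'(h)\to 0$. To see the growth rate, note the crude two-sided bound coming from the definition: $hm_N(\sigma)\le H_N^h(\sigma)/N \le H_N(\sigma)/N + h$, together with the trivial choice $\sigma=(1,\dots,1)$ giving $m_N=1$, so $L_N^h \ge h + H_N(\mathbf 1)/N$ and $L_N^h \le h + L_N^0$; letting $N\to\infty$ yields $h \le M(h) \le h + M(0)$, hence $M(h)/h\to 1$, so in fact $\ell=1$ and $M'(h)\to 1$. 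Then $0\le M(h)-hM'(h) = \bigl(M(h)-h\bigr) + h\bigl(1-M'(h)\bigr) \le M(0) + h\bigl(1-M'(h)\bigr)$, and since $E$ is nonincreasing and bounded below by $0$ (indeed $M(h)-hM'(h)\ge M(h)-h\cdot \frac{M(h)-M(0)}{h} = M(0)\ge 0$ by convexity, using $M'(h)\le \frac{M(h)-M(0)}{h}$... wait, convexity gives $M'(h)\ge\frac{M(h)-M(0)}{h}$, so this direction needs the slope bound $M'(h)\le \ell=1$, giving $E(h)=M(h)-hM'(h)\ge M(h)-h\ge 0$), the limit $L:=\lim_{h\to\infty}E(h)$ exists in $[0,\infty)$. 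To pin down $L=0$, use $h(1-M'(h))=h-hM'(h)=h-\int_0^h M'(t)\,dt - \bigl(hM'(h)-\int_0^hM'(t)\,dt\bigr)$; the bracket $\int_0^h(M'(h)-M'(t))\,dt\ge 0$ is exactly $hM'(h)-(M(h)-M(0)) = M(0)-E(h)$... no — let me just say: $E(h)=M(h)-hM'(h)=M(0)+\int_0^h M'(t)\,dt - hM'(h) = M(0)-\int_0^h\bigl(M'(h)-M'(t)\bigr)dt$, and since $M'(h)-M'(t)\ge 0$ increases to $1-M'(t)\ge 0$ as $h\to\infty$ and $\int_0^\infty(1-M'(t))\,dt$ equals $M(0)$ by the bound $M(h)\le h+M(0)$ with equality in the limit (as $M(h)-h = M(0)-\int_0^h(1-M'(t))dt \to$ its limit), monotone convergence gives $\int_0^h(M'(h)-M'(t))\,dt\to M(0)$, whence $E(h)\to 0$. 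The main obstacle is thus not any single inequality but organizing these slope/integral estimates so that the a.s.-nonrandomness and the everywhere-differentiability from Proposition \ref{sec5:prop1} are invoked cleanly; everything else is convexity bookkeeping.
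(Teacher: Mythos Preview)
Your treatment of monotonicity, continuity, and $E(0)=M(0)$ is correct and matches the paper's argument (the paper writes the monotonicity computation with the same two-term decomposition). The problem is the limit $\lim_{h\to\infty}E(h)=0$: your purely convex-analytic route does not close.

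Unwinding your last paragraph, you correctly obtain
\[
E(h)=M(0)-\int_0^h\bigl(M'(h)-M'(t)\bigr)\,dt,
\]
and monotone convergence gives $\int_0^h(M'(h)-M'(t))\,dt\to\int_0^\infty(1-M'(t))\,dt=M(0)-\lim_{h\to\infty}\bigl(M(h)-h\bigr)$. So your computation only yields $\lim_{h\to\infty}E(h)=\lim_{h\to\infty}\bigl(M(h)-h\bigr)$, and the bounds $h\le M(h)\le h+M(0)$ that you extracted from the model merely trap this limit in $[0,M(0)]$. They do not force it to be $0$: take for instance the $C^1$ convex function $M(h)=1+h^2/2$ on $[0,1]$ and $M(h)=h+1/2$ on $[1,\infty)$, which satisfies $M(0)=1$, $M'(0)=0$, $h\le M(h)\le h+1$, $M'(h)\to 1$, yet $E(h)\equiv 1/2$ for $h\ge 1$. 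In other words, no amount of ``convexity bookkeeping'' can finish this step; one needs a model input beyond $M(h)=h+O(1)$.

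The paper supplies exactly that missing input in its Lemma~\ref{lem2}. The mechanism is not analytic but probabilistic: by Proposition~\ref{thm3} the maximizer has magnetization near $M'(h)$, and the combinatorial Lemma~\ref{lem1} shows that two Ising configurations with magnetization near $1$ must have overlap near $1$. Since $\mathrm{Var}\,H_N(\sigma)/N=\xi(1)-\xi(R)$ on pairs, the Hamiltonian $H_N$ has vanishing fluctuations on the set $\{|m_N-M'(h)|<\varepsilon_h\}$ as $h\to\infty$; an interpolation with inverse temperature $\beta_h=(1-M'(h)+\varepsilon_h)^{-1/2}$ then forces $\e\max_{A_N(h,\varepsilon_h)}H_N(\sigma)/N\to 0$, which is exactly $E(h)\to 0$. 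This variance collapse on high-magnetization configurations is the idea your argument is missing.
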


Our main result on the energy landscape of the Hamiltonian $H_N$ is formulated in Theorem \ref{thm1} below. It uses the notion of the overlap constant, $q_h$.  We postpone its definition to Subsection \ref{sec4.2}, but remark that $q_h$ depends only on $h$ and $\xi$ and satisfies $0<q_h<1$ if $h\neq 0$ and $q_h=0$ if $h=0$. 
\begin{theorem}\label{thm1}
     Let $h\geq 0.$ For any $\varepsilon>0,$ there exists a constant $K>0$ depending on $h,\xi,\varepsilon$ such that for any $N\geq 1,$ with probability at least $1-Ke^{-N/K}$, there exists $S_N(h)\subset\Sigma_N$ such that the following hold:
	\begin{itemize}
		\item[$(i)$] $|S_N(h)|\geq e^{N/K}$,
		\item[$(ii)$] For any $\sigma\in S_N(h)$,
		$
		\Bigl|\frac{H_N(\sigma)}{N}-E(h)\Bigr|< \varepsilon.
		$
		\item[$(iii)$] For any $\sigma\in S_N(h)$, $|m_N(\sigma)-M'(h)|<\varepsilon.$
		\item[$(iv)$] For any distinct $\sigma,\sigma'\in S_N(h)$, $|R(\sigma,\sigma')-q_h |< \varepsilon$.
	\end{itemize}
\end{theorem}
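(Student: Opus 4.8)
The organizing principle is that, by Lagrangian duality, $L_N^h=\max_m\bigl(\ell_N(m)+hm\bigr)$ with $\ell_N(m):=\max\{H_N(\sigma)/N:\,m_N(\sigma)=m\}$; in the limit $M$ is the Legendre transform of $-\ell:=-\lim_N\ell_N$, the optimal magnetization is $M'(h)$, and
\[
E(h)=M(h)-hM'(h)=\ell\bigl(M'(h)\bigr)
\]
is the largest value of $H_N(\sigma)/N$ attainable on configurations of magnetization $\approx M'(h)$. Thus items $(ii)$ and $(iii)$ together say that $S_N(h)$ should be drawn from the set $\mathcal A_\delta:=\{\sigma\in\Sigma_N:\,H_N^h(\sigma)/N\ge L_N^h-\delta\}$ of near-maximizers of $H_N^h$, and the real content of the theorem is that $\mathcal A_\delta$ contains an exponentially large subset whose members are pairwise at overlap $q_h$.

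\smallskip\noindent\emph{Step 1: every near-maximizer already satisfies $(ii)$ and $(iii)$.}
If $H_N^h(\sigma)/N\ge L_N^h-\delta$ then for any $h'\ge0$ one has $H_N(\sigma)/N+h'm_N(\sigma)\le L_N^{h'}$, hence $(h'-h)\,m_N(\sigma)\le L_N^{h'}-L_N^h+\delta$. Gaussian concentration gives $L_N^{h'}\to M(h')$ with error $Ke^{-N/K}$ uniformly for $h'$ near $h$, and $M$ is differentiable at $h$ by Proposition \ref{sec5:prop1}; letting $h'\downarrow h$ and $h'\uparrow h$ and then $\delta\downarrow0$ yields $m_N(\sigma)\to M'(h)$ and therefore $H_N(\sigma)/N=H_N^h(\sigma)/N-h\,m_N(\sigma)\to M(h)-hM'(h)=E(h)$, all on an event of probability $\ge1-Ke^{-N/K}$. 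So it remains to find, inside $\mathcal A_\delta$, a set of size $\ge e^{N/K}$ with all pairwise overlaps within $\varepsilon$ of $q_h$.

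\smallskip\noindent\emph{Step 2: exponentially many near-maximizers and their overlap structure.}
Here I would feed the Parisi-type formula for $L_N^h$ of \cite{AC15} into two estimates. The first is an \emph{entropy lower bound}: with probability $\ge1-Ke^{-N/K}$,
\[
\tfrac1N\log|\mathcal A_\delta|\ \ge\ c(\delta)\ >\ 0 .
\]
This is the step that replaces the logarithmic bound of \cite{chatt} and the polynomial bound of \cite{DEZ} by an exponential one, and it cannot come from the abstract Gaussian-field machinery; I would obtain it from the Parisi formula, e.g.\ by controlling the rate at which $\tfrac1{N\beta}\log\sum_\sigma e^{\beta H_N^h(\sigma)}$ decreases to $M(h)$, equivalently by a truncated/conditional second-moment computation for the number of $\sigma$ with $H_N(\sigma)/N\approx E(h)$ and $m_N(\sigma)\approx M'(h)$, the point being that for the mixed even $p$-spin model the relevant complexity function is strictly positive just below the maximal energy. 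The second estimate concerns the \emph{overlaps inside $\mathcal A_\delta$}: via the Ghirlanda--Guerra identities and ultrametricity of the near-ground-state structure, any two near-maximizers have overlap $\ge q_h-\varepsilon$ (for $h>0$ this is also forced by $m_N(\sigma)\approx M'(h)>0$; for $h=0$ one additionally uses the $\pm$-symmetry of $H_N$ to discard antipodal pairs), and the partition of $\mathcal A_\delta$ into approximate pure states at overlap resolution $q_h+\varepsilon$ has no part larger than $e^{c'(\delta)N}$ with $c'(\delta)<c(\delta)$ — this is precisely where $q_h$, the smallest value in the support of the zero-temperature Parisi measure, enters, and it is why distinct pure states sit at mutual overlap within $\varepsilon$ of $q_h$.

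\smallskip\noindent\emph{Step 3: selection and main obstacle.}
Picking one near-maximizer from each approximate pure state produces $S_N(h)\subset\mathcal A_\delta$ with $|S_N(h)|\ge e^{(c(\delta)-c'(\delta))N}$ and all pairwise overlaps within $\varepsilon$ of $q_h$; by Step 1 its members also satisfy $(ii)$ and $(iii)$, so choosing $\delta$ small in terms of $\varepsilon$ and $K$ large gives the theorem, with $q_0=0$ recovering exponentially many orthogonal peaks at the maximal energy. The main obstacle is the entropy lower bound of Step 2 — the exponential count — together with the identification of $q_h$ with the bottom of the limiting overlap support and the uniform control of the pure-state decomposition; all of these require a genuine analysis of the zero-temperature Parisi functional of \cite{AC15} and of the Ghirlanda--Guerra framework, which is the new ingredient beyond \cite{chatt,DEZ}. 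The envelope argument of Step 1, and the a.s.\ statements with exponential error bars used throughout, are standard applications of Gaussian concentration to $L_N^{h'}$ and to the associated free energies.
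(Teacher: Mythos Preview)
Your Step~1 is fine and close in spirit to Propositions~\ref{thm3} and~\ref{prop2} in the paper. But Step~2 is where the argument breaks down, and it misses the paper's central mechanism entirely.

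The paper does \emph{not} attempt to count $\mathcal A_\delta$ directly, nor does it invoke Ghirlanda--Guerra identities, ultrametricity, or any pure-state decomposition at zero temperature. Instead, the set $S_N(h)$ is \emph{manufactured} using disorder chaos (Theorem~\ref{thm-1}). One fixes $t<1$ close to $1$, takes $n$ i.i.d.\ copies $H_N^1,\ldots,H_N^n$ of $H_N$, forms the perturbed Hamiltonians $H_{N,t}^{\ell,h}=\sqrt t\,H_N+\sqrt{1-t}\,H_N^\ell+h\sum_i\sigma_i$, and lets $\sigma_{t,h}^\ell$ be the maximizer of each. For any pair $\ell\neq\ell'$, Theorem~\ref{thm-1} gives $|R(\sigma_{t,h}^\ell,\sigma_{t,h}^{\ell'})-q_{t,h}|<\varepsilon/2$ with failure probability $K_1e^{-N/K_1}$, \emph{uniformly in $n$}; Propositions~\ref{thm3} and~\ref{prop2} give the energy and magnetization concentration for each $\sigma_{t,h}^\ell$, again with a tail bound independent of $n$. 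A union bound over the $O(n^2)$ constraints therefore succeeds as long as $n^2e^{-N/K}\to0$, which allows $n\sim e^{N/(3K)}$. Choosing $t$ close enough to $1$ makes $q_{t,h}$ close to $q_h$ and the perturbation $\sqrt{1-t}\,H_N^\ell$ negligible relative to $H_N$, so $H_N(\sigma_{t,h}^\ell)/N\approx H_{N,t}^{\ell,0}(\sigma_{t,h}^\ell)/N\approx E(h)$.

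Your proposed route would require (a) a positive zero-temperature complexity/entropy for $\mathcal A_\delta$, (b) a lower bound $R(\sigma,\sigma')\ge q_h-\varepsilon$ for all pairs of near-maximizers, and (c) a quantitative upper bound on the size of each approximate pure state, all with exponential error bars. None of these is available here: (a) is exactly the hard part you defer (``the main obstacle''), and (b)--(c) would need a zero-temperature ultrametric decomposition with uniform control, which is not established for the Ising mixed $p$-spin model in this paper. The disorder-chaos construction sidesteps all of this: the exponential count comes for free from the union bound, and the overlap concentration at $q_h$ is precisely the content of Theorem~\ref{thm-1}, whose proof occupies Section~\ref{sec:disorder_chaos} and rests on the two-dimensional Guerra--Talagrand bound and the analysis of the zero-temperature Parisi functional.
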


Theorem \ref{thm1} means that sufficiently near the energy level $E(h)$ of $H_N/N$, there exist exponentially many spin configurations that are approximately equidistant from each other and their magnetizations are near $M'(h)$. Both $q_h=0$ and $M'(h)=0$ when $h=0.$ Consequently, Theorem~\ref{thm1} assures the existence of exponentially many orthogonal peaks with zero magnetization.

As we mentioned before, in the SK model the energy landscape at the maximal energy was studied previously by Chatterjee \cite{chatt}. He showed that with probability at least $1-K(\log N)^{-1/12}$ for some constant $K$, there exists a set of spin configurations of size at least $(\log N)^{1/8}$ that satisfy $(ii)$ and $(iv)$ with $q_h=0$. In Ding-Eldan-Zhai \cite{DEZ}, the size of this set was improved to be of polynomial order. Both works specified the dependence of $\varepsilon$ on $N$.  Theorem \ref{thm1} here improves these results (without quantitative dependence of $\varepsilon$ on $N$) by extending the number of orthogonal peaks to be of exponential order and  establishing analogous geometric structure at all energy levels. 

\subsection{Chaos in disorder}

We formulate our second main result: disorder chaos at zero temperature for the Hamiltonian $H_N^h$. This is interesting on its own, but is also crucial to the construction of the set $S_N(h)$ of Theorem~\ref{thm1}. Generally, chaos in spin glasses studies the instability of the system subject to a small perturbation of certain external parameters such as temperature, disorder, or external field. It is a very old subject that has received a lot of attention in the physics community; see Rizzo \cite{Rizzo} for an up-to-date survey. In recent years, intensive mathematical progress on chaos in disorder has been made for the mixed even $p$-spin models: At positive temperature, Chatterjee \cite{chatt} and Chen \cite{C12,C14} considered the Ising spin case, while Chen-Hsieh-Hwang-Sheu~\cite{C15} studied the spherical spin case. More recently, disorder chaos at zero temperature was obtained in the spherical model by Chen-Sen \cite{ArnabChen15}.

Our investigation aims to establish chaos in disorder for the Hamiltonian $H_N^h$ at zero temperature. More precisely, we are interested in the behavior of the maximizer of $L_N^h$ when a perturbation is applied to the disorder. Let $t\in[0,1]$ be a coupling parameter. Consider two Hamiltonians
\begin{align*}
H_{N,t}^{1,h}(\sigma^1 )&:=\sqrt{t}H_{N}(\sigma^1 )+\sqrt{1-t}H_{N}^1(\sigma^1 )+h\sum_{i=1}^N\sigma^1 _i,\\
H_{N,t}^{2,h}(\sigma^2  )&:=\sqrt{t}H_N(\sigma^2  )+\sqrt{1-t}H_N^2(\sigma^2  )+h\sum_{i=1}^N\sigma^2  _i,
\end{align*}
where $H_N^1$ and $H_N^2$ are two i.i.d. copies of $H_N$. One immediately sees that $H_{N,t}^1=H_{N,t}^2$ if $t=1$, while for $0<t<1,$ there is a certain amount of decoupling between the two systems controlled by the parameter $t$. 

We measure chaos in disorder by the cross overlap between the maximizers of $H_{N,t}^{1,h}$ and $H_{N,t}^{2,h}.$ By cross overlap, we mean the overlap between spin configurations from different Hamiltonians. Let $\sigma_{t,h}^{1}$ and $\sigma_{t,h}^2$ be any maximizers of $H_{N,t}^{1,h}$ and $H_{N,t}^{2,h}$, respectively. Note that the Hamiltonians $H_{N}, H_N^1,H_N^2$ are summations of even $p$-spin interactions. If $h\neq 0,$ then these maximizers are almost surely unique. Nonetheless,  if $h=0,$ then the two Hamiltonians $H_{N,t}^{1,0}$ and $H_{N,t}^{2,0}$ are symmetric, i.e., $H_{N,t}^{j,0}(\sigma)=H_{N,t}^{j,0}(-\sigma)$ for $j=1,2$ and yield
\begin{align*}
\mbox{Argmax}_{\sigma\in\Sigma_N}H_{N,t}^{1,0}(\sigma)&=\{\sigma_{t,0}^1,-\sigma_{t,0}^1\},\\
\mbox{Argmax}_{\sigma\in\Sigma_N}H_{N,t}^{2,0}(\sigma)&=\{\sigma_{t,0}^2,-\sigma_{t,0}^2\},
\end{align*}
from which one sees that the absolute value of the cross overlap between any maximizers of $H_{N,t}^{1,h}$ and $H_{N,t}^{2,h}$ is always equal to
\begin{align*}
|R(\sigma_{t,0}^1,\sigma_{t,0}^2)|.
\end{align*} 
Based on these, we measure the instability of the system with respect to disorder perturbation by $R(\sigma_{t,h}^1,\sigma_{t,h}^2)$ when $h\neq 0$ and by $|R(\sigma_{t,0}^1,\sigma_{t,0}^2)|$ when $h=0.$ It is clear that these two quantities are equal to one if $t=1.$ However, if $0<t<1$, Theorem \ref{thm-1} below shows that they are concentrated around a constant strictly less than one no matter how close $t$ is to $1$. 

\begin{theorem}\label{thm-1}
	Let $t\in (0,1)$ and $\varepsilon>0.$ The following two statements hold for any $\xi$ defined in \eqref{xi}: 
	\begin{itemize}
		\item[$(i)$] If $h=0,$ there exists a constant $K>0$ depending on $t,\xi,\varepsilon$ such that \begin{align*}
		\p\bigl(|R(\sigma_{t,0}^1  ,\sigma_{t,0}^2 )|\geq \varepsilon\bigr)\leq K\exp\Bigl(-\frac{N}{K}\Bigr),\,\,\forall N\geq 1.
		\end{align*}
		\item[$(ii)$] If $h\neq 0,$ there exists a constant $K>0$ depending on $t,h,\xi,\varepsilon$ such that
		\begin{align*}
		\p\bigl(|R(\sigma_{t,h}^1  ,\sigma_{t,h}^2 )-q_{t,h}|\geq \varepsilon\bigr)\leq K\exp\Bigl(-\frac{N}{K}\Bigr),\,\,\forall N\geq 1,
		\end{align*}
       where the constant $q_{t,h}$ relies only on $\xi,h,t$ and satisfies $q_{t,h}\in(0,q_h)$. Here, $q_h$ is the same constant as the one in Theorem \ref{thm1}.
	\end{itemize}

\end{theorem}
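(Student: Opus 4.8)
The plan is to combine a Guerra-Talagrand type upper bound for the coupled, overlap-constrained system with the Parisi-type formula of Auffinger-Chen \cite{AC15} for the limiting maximal energy $M(h)$, together with Gaussian concentration. Write $L_{N,t}^{j,h}:=\max_{\sigma\in\Sigma_N}H_{N,t}^{j,h}(\sigma)/N$ for $j=1,2$. Since $\sqrt{t}H_N+\sqrt{1-t}H_N^j$ has the same law as $H_N$, each $L_{N,t}^{j,h}$ is distributed as $L_N^h$, so by \eqref{add:eq2} and Borell-TIS concentration, for every $\delta>0$ one has $|L_{N,t}^{j,h}-M(h)|<\delta$ with probability at least $1-Ke^{-N/K}$; evaluating the coupled Hamiltonians at their maximizers gives $\frac1N\bigl(H_{N,t}^{1,h}(\sigma_{t,h}^1)+H_{N,t}^{2,h}(\sigma_{t,h}^2)\bigr)=L_{N,t}^{1,h}+L_{N,t}^{2,h}$, which therefore equals $2M(h)+o(1)$ with the same exponential control. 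The argument rests on showing that this value $2M(h)$ can only be attained when the cross overlap $R(\sigma_{t,h}^1,\sigma_{t,h}^2)$ is near the constant $q_{t,h}$ from Subsection \ref{sec4.2}. To that end, for $u\in[-1,1]$, small $\eta>0$, and $\beta>0$, consider the constrained partition function $Z_{N,\beta}(u):=\sum\exp\bigl(\beta H_{N,t}^{1,h}(\sigma^1)+\beta H_{N,t}^{2,h}(\sigma^2)\bigr)$, the sum being over $\sigma^1,\sigma^2\in\Sigma_N$ with $|R(\sigma^1,\sigma^2)-u|\le\eta$. Running the Guerra-Talagrand interpolation for two replicas that share a $\sqrt{t}$-fraction of the disorder and are pinned at overlap $u$ gives $\limsup_N\frac1N\e\log Z_{N,\beta}(u)\le\beta\,\mathcal P_{t,h,\beta}(u)+o_\eta(1)$ for a two-dimensional Parisi-type functional; dividing by $\beta$ and letting $\beta\to\infty$ (then $\eta\to0$) yields the zero-temperature bound $\limsup_N\frac1N\max_{|R(\sigma^1,\sigma^2)-u|\le\eta}\bigl(H_{N,t}^{1,h}(\sigma^1)+H_{N,t}^{2,h}(\sigma^2)\bigr)\le\mathcal P_{t,h}(u)+o_\eta(1)$, where $\mathcal P_{t,h}$ is the coupled analogue of the Auffinger-Chen functional. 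Since $\frac1N\max_W(\cdot)$ is $O(N^{-1/2})$-Lipschitz in the disorder for any overlap window $W$, Gaussian concentration upgrades this into an exponential tail bound, uniform over finitely many windows.

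The analytic core is to establish that $\mathcal P_{t,h}(u)\le 2M(h)$ for all $u$, with equality precisely at $u=q_{t,h}$ (and, when $h=0$, precisely at $u=0$). The inequality follows by discarding the coupling terms in the functional, which decouples it into twice the single-system zero-temperature Parisi functional whose value is $M(h)$ by \cite{AC15}; the equality case is identified through the first-order optimality conditions of the coupled functional, and $q_{t,h}$ is by definition the resulting optimal overlap. For the quantitative version one shows, using strict concavity of $u\mapsto\mathcal P_{t,h}(u)$ near its maximizer (or, absent a clean concavity statement, a direct perturbation estimate showing the coupling term strictly penalizes any displacement of the overlap), that $\sup_{|u-q_{t,h}|\ge\varepsilon}\mathcal P_{t,h}(u)\le 2M(h)-\delta_0$ for some $\delta_0=\delta_0(\varepsilon,t,h,\xi)>0$. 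When $h\ne 0$, the magnetization constraint built into the single-system formula forces $q_{t,h}>0$, while $q_{t,h}<q_h$ because only a $\sqrt{t}$-fraction of the disorder is common to the two systems; when $h=0$, the spin-flip symmetry $H_{N,t}^{j,0}(-\sigma)=H_{N,t}^{j,0}(\sigma)$ makes $\mathcal P_{t,0}$ even with maximizer $u=0$, which is why that case is phrased through $|R(\sigma_{t,0}^1,\sigma_{t,0}^2)|$.

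To conclude, cover the bad region $\{u:|u-q_{t,h}|\ge\varepsilon\}$ (or $\{u:|u|\ge\varepsilon\}$ when $h=0$) by finitely many windows of radius $\eta\ll\delta_0$. On the event that (a) each $L_{N,t}^{j,h}$ lies within $\delta_0/4$ of $M(h)$ and (b) the constrained maximum over each of these windows is below $2M(h)-\delta_0/2$ --- an event of probability at least $1-Ke^{-N/K}$ by the previous two paragraphs --- the cross overlap of the maximizers cannot lie in the bad region, since otherwise $L_{N,t}^{1,h}+L_{N,t}^{2,h}\le 2M(h)-\delta_0/2$, contradicting (a). In the zero-field case one runs the same argument with $|R|$, using the recorded symmetry. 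Absorbing small values of $N$ into $K$ then gives exactly the bounds of Theorem \ref{thm-1}.

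The main obstacle is the content of the second paragraph: deriving the correct zero-temperature Guerra-Talagrand functional for the coupled, overlap-constrained model --- the $\beta\to\infty$ passage inside a Parisi-type formula is itself delicate and parallels the analysis behind the unconstrained formula of \cite{AC15} --- and then carrying out the optimality and concavity analysis that singles out $q_{t,h}$ as the unique maximizer and produces a strictly positive gap away from it. A secondary, still nontrivial point is making all the concentration estimates quantitative and uniform enough (over the finite overlap cover and over all $N\ge 1$) to deliver the stated $Ke^{-N/K}$ tail rather than merely an asymptotic high-probability statement.
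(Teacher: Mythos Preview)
Your overall architecture matches the paper's: reduce to showing a strict gap between the Guerra--Talagrand bound for the coupled, overlap-constrained system at zero temperature and $2M(h)$, then use Gaussian concentration on finitely many overlap windows to conclude. The concentration wrap-up in your third paragraph is essentially identical to the paper's Section~\ref{sec:disorder_chaos} endgame (the paper works overlap-by-overlap on the lattice $V_N$ rather than with $\eta$-windows, but this is cosmetic).

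The substantive divergence is in how the strict gap is produced. You propose either concavity of $u\mapsto\mathcal P_{t,h}(u)$ or ``discarding the coupling terms'' to decouple into two copies. Neither of these is what the paper does, and neither would work cleanly. The paper's bound $\Lambda(\lambda,\gamma,q)$ carries an auxiliary Lagrange multiplier $\lambda$ and a functional parameter $\gamma\in\mathcal U$, and the argument splits according to whether $|q|\le q_h$ or $|q|>q_h$. For $|q|\le q_h$, taking $\lambda=0$ and $\gamma=\gamma_h$ (the Parisi minimizer) gives exactly $2M(h)$---this is your ``decoupling'' observation, made precise via Lemma~\ref{sec4.3:lem1}---but the strict improvement comes from computing $\partial_\lambda\Lambda(0,\gamma_h,q)=\psi_t(q)-q$, where $\psi_t$ is the fixed-point map whose unique zero is $q_{t,h}$ (Proposition~\ref{prop4.2}); one then perturbs $\lambda$ away from $0$. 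For $|q|>q_h$ this fails because the integral term in $\Lambda$ no longer vanishes; instead the paper chooses $\gamma_t=\gamma_h/(1+t)$ on $[0,|q|)$ and shows $\Psi_{\gamma_t}(0,0,h,h)<2\Phi_{\gamma_h}(0,h)$ via the stochastic optimal control representation (Theorem~\ref{thm2}, Proposition~\ref{sec4.3:prop1}) and a contradiction argument forcing two correlated Brownian motions to be equal. No concavity in $q$ is established or used. Your heuristics for $q_{t,h}\in(0,q_h)$ are also not how the paper proceeds: both bounds come from the analysis of $\psi_t$ in Proposition~\ref{prop4.2}, which in turn rests on the consistency equations \eqref{sec4.2:eq1}--\eqref{sec4.2:eq2} for the Parisi measure. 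So your sketch correctly locates the difficulty but underestimates the machinery needed to resolve it.
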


Theorem \ref{thm-1}$(i)$ means that as long as there is no external field, the maximizers of $H_{N,t}^{1,0}$ are orthogonal to the maximizers of $H_{N,t}^{2,0}$. In the case that $h\neq 0$, Theorem \ref{thm-1}$(ii)$ states that $\sigma_{t,h}^1$ and $\sigma_{t,h}^2$ preserve a strictly positive distance $\sqrt{2(1-q_{t,h})}$  as $N$ tends to infinity. The key point is that even when $t$ approaches $1$, this distance remains at least $\sqrt{2(1-q_h)}>0.$ This indicates instability of the maximizers due to the perturbation of the disorder. Theorem \ref{thm-1} was previously obtained in the context of the spherical mixed even $p$-spin model by Chen-Sen \cite{ArnabChen15}. However, as explained in Subsection \ref{add:sub1}, the proof of Theorem \ref{thm-1} is fundamentally different than that of the spherical case. 

Finally, we derive two consequences of Theorem \ref{thm-1} on the
fluctuation properties of the maximal energy:

\begin{theorem}\label{thm5} For any $\xi$ defined in \eqref{xi}, the following two limits hold.
\begin{itemize}
	\item[$(i)$] If $h=0,$ then
	\begin{align*}
	\lim_{N\rightarrow\infty}N\mbox{Var}(L_N^h)=0.
	\end{align*}
	\item[$(ii)$] If $h>0,$ then
	\begin{align*}
	\lim_{N\rightarrow\infty}d_{TV}\bigl(\sqrt{N}\bigl(L_N^h-\e L_N^h\bigr),g\bigr)=0,
	\end{align*}
	where $d_{TV}$ is the total variation distance and $g$ is a centered Gaussian random variable with variance $
	\int_0^1\xi(q_{t,h})dt.
	$
\end{itemize}

\end{theorem}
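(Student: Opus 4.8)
The plan is to reduce both parts to the disorder-chaos estimates of Theorem~\ref{thm-1} through a single exact formula for the variance of the ground state energy, and then, for part~$(ii)$, to upgrade that variance estimate to a Gaussian limit by a Malliavin--Stein argument in the spirit of Chen--Sen~\cite{ArnabChen15}. For the variance identity, fix $h\ge 0$ and, for $t\in[0,1]$, set $L^{j,h}_{N,t}:=\frac1N\max_\sigma H^{j,h}_{N,t}(\sigma)$ and $f(t):=\e\bigl[L^{1,h}_{N,t}L^{2,h}_{N,t}\bigr]$. Since $\sqrt t\,H_N+\sqrt{1-t}\,H_N^j$ is equal in law to $H_N$, each $L^{j,h}_{N,t}$ has the law of $L_N^h$; hence $f(1)=\e[(L_N^h)^2]$ while $f(0)=(\e L_N^h)^2$ by independence, so $\mathrm{Var}(L_N^h)=\int_0^1 f'(t)\,dt$. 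To compute $f'(t)$ I would first replace $\max_\sigma$ by the smooth free energy $\frac1{\beta N}\log\sum_\sigma e^{\beta H^{j,h}_{N,t}(\sigma)}$, for which the Gaussian interpolation formula applies verbatim and yields $f_\beta'(t)=\frac1N\,\e\bigl[\langle\xi(R(\sigma^1,\sigma^2))\rangle_{\beta,t}\bigr]$, where $\langle\cdot\rangle_{\beta,t}$ is the product Gibbs measure of the two coupled systems; here one uses that the disorder derivative of the free energy produces $c_p N^{-(p+1)/2}\langle\sigma_{i_1}\cdots\sigma_{i_p}\rangle_{\beta}$ and that the ensuing sum over $(i_1,\dots,i_p)$ collapses, via replicas, to $\xi(R)$. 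Letting $\beta\to\infty$, the free energies converge uniformly to the ground state energies (gap $\le\beta^{-1}\log 2$), $\langle\cdot\rangle_{\beta,t}$ concentrates on the (a.s.\ unique, up to a global sign when $h=0$, which is irrelevant since $p$ is even) pair of maximizers, and, using $|\xi|\le\xi(1)<\infty$ on $[-1,1]$ and dominated convergence, one obtains
\[
N\,\mathrm{Var}(L_N^h)=\int_0^1\e\bigl[\xi\bigl(R(\sigma^1_{t,h},\sigma^2_{t,h})\bigr)\bigr]\,dt .
\]

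Given this identity, both the variance statements follow quickly. Since $|\xi(R(\sigma^1_{t,h},\sigma^2_{t,h}))|\le\xi(1)$ for all $N,t$ and $\xi$ is continuous, Theorem~\ref{thm-1} together with dominated convergence --- applied first on the probability space for each fixed $t$, then in $t$ --- gives $N\,\mathrm{Var}(L_N^h)\to\int_0^1\xi(q_{t,h})\,dt$, with the reading $q_{t,0}=0$. If $h=0$ this limit is $\int_0^1\xi(0)\,dt=0$, because the series \eqref{xi} starts at $p=2$; this is part~$(i)$. If $h>0$ the limit is $v_h:=\int_0^1\xi(q_{t,h})\,dt$, and $v_h>0$ since $q_{t,h}\in(0,q_h)$ on $(0,1)$ and $\xi>0$ on $(0,1]$.

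For the distributional statement in part~$(ii)$, note that for $h>0$ the maximizer $\sigma^*(\mathbf g)$ is a.s.\ unique, so $W_N:=\sqrt N\,(L_N^h-\e L_N^h)$ is a Lipschitz, hence $\mathbb{D}^{1,2}$, functional of the disorder, whose Malliavin derivative equals a.e.\ the envelope gradient, with components $c_p N^{-p/2}\prod_{k=1}^p\sigma^*(\mathbf g)_{i_k}$. The Malliavin--Stein inequality then gives
\[
d_{TV}\bigl(W_N,\ \mathcal{N}(0,\mathrm{Var}(W_N))\bigr)\ \le\ \frac{2}{\mathrm{Var}(W_N)}\sqrt{\mathrm{Var}\bigl(\langle DW_N,\,-DL^{-1}W_N\rangle\bigr)} .
\]
Using the Ornstein--Uhlenbeck representation $-DL^{-1}W_N=\int_0^\infty e^{-s}P_s(DW_N)\,ds$ and the identity $\sum_{p}\sum_{\mathbf i}c_p^2 N^{-p}\prod_k\sigma^*(\mathbf g)_{i_k}\prod_k\sigma^*(\mathbf g_s)_{i_k}=\xi\bigl(R(\sigma^*(\mathbf g),\sigma^*(\mathbf g_s))\bigr)$ with $\mathbf g_s=e^{-s}\mathbf g+\sqrt{1-e^{-2s}}\,\mathbf g'$, one finds $\langle DW_N,-DL^{-1}W_N\rangle=\e'\!\bigl[\int_0^\infty e^{-s}\xi(R(\sigma^*(\mathbf g),\sigma^*(\mathbf g_s)))\,ds\,\big|\,\mathbf g\bigr]$, where $\e'$ is the expectation in $\mathbf g'$. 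The crucial point is that $(\mathbf g,\mathbf g_s)$ has the same joint law as the coupled disorders defining $\sigma^1_{e^{-s},h},\sigma^2_{e^{-s},h}$, so Theorem~\ref{thm-1}$(ii)$ --- with its exponential rate --- forces $\xi(R(\sigma^*(\mathbf g),\sigma^*(\mathbf g_s)))\to\xi(q_{e^{-s},h})$ in probability for each $s>0$; since everything is bounded by $\xi(1)$, dominated convergence (after the substitution $t=e^{-s}$, applied to the first two moments) yields $\int_0^\infty e^{-s}\xi(R(\sigma^*(\mathbf g),\sigma^*(\mathbf g_s)))\,ds\to v_h$ in $L^2$, and hence, by contractivity of conditional expectation, $\langle DW_N,-DL^{-1}W_N\rangle\to v_h$ in $L^2$. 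Thus the right-hand side above tends to $0$, and combining with $\mathrm{Var}(W_N)\to v_h>0$ and the elementary estimate for $d_{TV}$ between centered Gaussians with nearby variances gives $d_{TV}(W_N,g)\to0$ for $g\sim\mathcal{N}(0,v_h)$, $v_h=\int_0^1\xi(q_{t,h})\,dt$, as claimed.

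\emph{Main obstacle.} The probabilistic content is already carried by Theorem~\ref{thm-1}; what remains is the analytic bookkeeping forced by the non-smoothness of $L_N^h$: making the interpolation/envelope computation rigorous via the finite-$\beta$ free energy together with a limiting argument that is uniform in $\beta$, and verifying that $W_N\in\mathbb{D}^{1,2}$ with exactly the stated Malliavin derivative so that the Malliavin--Stein bound is legitimately applicable. These two points are where I expect the proof to require the most care.
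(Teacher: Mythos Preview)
Your proposal is correct and follows exactly the route the paper has in mind: the paper omits the proof entirely, stating that it is ``practically identical'' to Theorems~4 and~5 of Chen--Sen~\cite{ArnabChen15}, which is precisely the variance-via-interpolation identity $N\,\mathrm{Var}(L_N^h)=\int_0^1\e\bigl[\xi(R(\sigma^1_{t,h},\sigma^2_{t,h}))\bigr]\,dt$ combined with Theorem~\ref{thm-1} for part~$(i)$, and the Malliavin--Stein bound with the Ornstein--Uhlenbeck representation of $-DL^{-1}$ for part~$(ii)$. Your identification $t=e^{-s}$ between the OU coupling and the paper's $(\sqrt{t},\sqrt{1-t})$ coupling, and the contractivity step reducing $L^2$-convergence of $\langle DW_N,-DL^{-1}W_N\rangle$ to that of the unconditioned integral, are exactly the mechanisms used there; the two technical points you flag (legitimizing the envelope derivative via the $\beta\to\infty$ free-energy approximation, and checking $W_N\in\mathbb{D}^{1,2}$ with the stated derivative) are the only places needing care and are handled as you indicate.
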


	The Poincar\'{e} inequality implies that for all $h\geq 0$, there exists some $C>0$ such that $\mbox{Var}(L_N^h)\leq C/N$ for any $N\geq 1$. Theorem~\ref{thm5}$(i)$ states that if the external field vanishes, $L_N^h$ is superconcentrated in the sense that it has a faster self-averaging rate than the Poincar\'{e} bound. We emphasize that this result applies to the SK model and it thus solves Open Problem~3.2 in Chatterjee \cite{chattbook}. We refer the readers to Chatterjee~\cite{chatt1} for some earlier results on superconcentration, where quantitative bounds were obtained for certain choices of $\xi$, which exclude the SK model. While $L_N^h$ exhibits superconcentration in the absence of external field, Theorem \ref{thm5}$(ii)$ shows that $L_N^h$ has a Gaussian fluctuation if the external field is present. In other words, the Poincar\'{e} inequality is sharp whenever $h>0$. An analogue of Theorem \ref{thm5} was established by Chen-Sen \cite{ArnabChen15} for the spherical mixed $p$-spin model. Its derivation relies on an analogue of Theorem \ref{thm-1} for the spherical case and an application of Stein's method for normal approximation. The  proof of Theorem \ref{thm5} is practically identical to that of the spherical case in  Chen-Sen~\cite{ArnabChen15}.

\subsection{Main ideas of proofs}\label{add:sub1}

Our proof of Theorem \ref{thm1} is motivated by Chatterjee \cite{chatt}. To construct multiple peaks, he first established chaos in disorder for the SK model at positive temperature and zero external field by deriving upper bounds for the moments of the cross overlap. These bounds depend on the temperature and the coupling parameter $t$ and they asymptotically vanish as $N$ tends to infinity  when $t<1$. He then used these bounds with an adjusted temperature to select logarithmically many orthogonal peaks.  The same bounds were later employed in Ding-Eldan-Zhai \cite{DEZ} to obtain a polynomially many peaks. The construction of peaks in these works requires a careful adjustment of both the coupling parameter and the inverse temperature. Such a simultaneous adjustment is restrictive and makes it very difficult to derive an exponential number of orthogonal peaks.  

To overcome this difficulty, we verify disorder chaos at zero temperature with exponential tail control in Theorem \ref{thm-1}, instead of considering the moments of the cross overlap at positive temperature. We explain the basic idea of establishing this in the case where $h=0$ (the case of $h\neq 0$ is similar). We show that for any $0<t,\varepsilon<1$, there exist constants $\eta,K>0$ depending only on $\xi, t,\varepsilon$ such that with probability at least $1-Ke^{-N/K}$,
\begin{align}
\begin{split}\label{add:eq9}
&\frac{1}{N}\max_{(\sigma^1,\sigma^2)\in\Sigma_N^2:|R(\sigma^1,\sigma^2)|\geq\varepsilon}\bigl(H_{N,t}^{1,0}(\sigma^1)+H_{N,t}^{2,0}(\sigma^2)\bigr)\\
&\leq \frac{1}{N}\max_{\sigma^1\in\Sigma_N}H_{N,t}^{1,0}(\sigma^1)+\frac{1}{N}\max_{\sigma^2\in\Sigma_N}H_{N,t}^{2,0}(\sigma^2)-\eta
\end{split}
\end{align}
for all $N\geq 1$. If this is valid, the probability of $|R(\sigma_{t,0}^{1},\sigma_{t,0}^2)|\geq \varepsilon$ must be bounded by $Ke^{-N/K}$ due to the positivity of $\eta$ and the optimality of $\sigma_{t,0}^1,\sigma_{t,0}^2$, which yields Theorem~\ref{thm-1}$(i)$. Generally, proving \eqref{add:eq9} is a very challenging task for arbitrary Gaussian fields. To accomplish this in the current setting we rely on the fact that the two sides of \eqref{add:eq9} can be controlled by the Parisi formula and the Guerra-Talagrand bound at zero temperature. The analysis of these formula and bound is based on an analogous study of the mixed $p$-spin model at positive temperature in the framework of Chen \cite{C14} and the recent development of the Parisi formula at zero temperature by Auffinger-Chen~\cite{AC16}.  Once chaos in disorder at zero temperature is established, the construction of nearly orthogonal peaks in Theorem \ref{thm1} is similar to Chatterjee~\cite{chatt}. Taking into account the external field allows us to select exponentially many equidistant spin configurations at all energy levels of $H_N.$ 

As mentioned before, a version of Theorem \ref{thm-1} for the spherical mixed $p$-spin model appeared in Chen-Sen \cite{ArnabChen15}. The approaches in both the Ising and spherical cases require a series of sophisticated techniques and results about the properties of Parisi's formulas at positive and zero temperatures as well as Guerra-Talagrand bounds. In the spherical case, these formulas and bounds admit explicit and simple expressions. They yield great simplifications in  controlling the coupled maximal energy of $H_{N,t}^{1,h}$ and $H_{N,t}^{2,h}$. However, in the Ising case, the Parisi formulas and Guerra-Talagrand bounds are formulated in a more delicate way through a semi-linear parabolic differential equation and its two-dimensional extension, called the Parisi PDEs. The analysis in the present paper is generally more subtle than that in the spherical case.  

\subsection{Open questions}

In view of Theorem \ref{thm1}, there are two closely related open questions of great interest. The first is to establish a version of Theorem \ref{thm1} with an error term $\varepsilon$ depending on $N.$ 

The second problem is to further explore the structure of the energy landscape of $H_N$. From the methodology presented in the rest of the paper, it seems possible that one can construct another set $S_N'(h)\subset\Sigma_N$ at the same energy level as that of $S_N(h)$ such that the spin configurations within $S_N'(h)$ are again equidistant to each other and also preserve a constant distance to all elements in $S_N(h).$ Besides, it also seems possible that for any $0\leq h<h',$ one can construct $S_N(h)$ and $S_N(h')$ at different energy levels such that Theorem \ref{thm1} holds for both energy levels and the distance between $S_N(h)$ and $S_N(h')$ is a fixed constant. These can be verified if one can establish results for chaos in external field $h$ and chaos in mixture parameters $(c_p)_{p\geq 2},$ similar to chaos in disorder in Theorem \ref{thm-1}.

\subsection{Organization of the paper}
The paper is organized as follows. Section~\ref{sec:Parisi_formula} focuses on the Parisi formula for the maximal energy $L_N^h$ and establishes the uniqueness of the Parisi measure. Furthermore, Section~\ref{sec:Parisi_formula} proves the non-triviality of the Parisi measure and also derives some consistency equations for this measure.  Section~\ref{sec:disorder_chaos} presents the proof for Theorem~\ref{thm-1} based on the results in Section~\ref{sec:Parisi_formula}. Section \ref{Sec5} verifies Proposition \ref{prop-0} and Theorem~\ref{thm1}. The proof of Theorem \ref{thm5} is omitted as it is identical to those of Theorems~$4$ and $5$ in \cite{ArnabChen15}.
Some technical results regarding the regularity of the Parisi PDEs  at zero temperature are left to the appendix. The paper frequently uses ideas of \cite{C14} and skips some proofs of results whenever they clearly follow an identical argument of \cite{C14}.

\vskip 0.3cm

	{\noindent \bf Acknowledgements.} 
	The authors are indebted to N. Krylov and M. Safonov for illuminating discussions on the regularity properties of the Parisi PDE. They thank S. Chatterjee, D. Panchenko, and the anonymous referees for a number of suggestions regarding the presentation of the paper. The research of W.-K. C. is partially supported by NSF grant DMS-16-42207 and Hong Kong research grants council GRF-14-302515. The research of M. H. and G. L. is partially supported by NSF grant DMS-14-18386.

\section{Parisi formula}
\label{sec:Parisi_formula}

The Parisi formula for the free energy in the mixed even $p$-spin model was first verified in the celebrated work of Talagrand \cite{Tal03}. Later Panchenko \cite{Pan00} validated it for more general mixtures including odd $p$-spin interactions. More recently, Auffinger-Chen \cite{AC16} extended Parisi's formula to the maximal energy of $H_N^h.$ Their formulation is described below.
Let $\mathcal{U}$ be the collection of all functions $\gamma$ on $[0,1)$ induced by some measure $\mu$, i.e., $\gamma(s)=\mu([0,s])$ and satisfying
$$
\int_0^1\gamma(s)ds<\infty.
$$
We equip $\mathcal{U}$ with the $L^1$ distance with respect to the Lebesgue measure on $[0,1)$.
For each $\gamma\in \mathcal{U}$, let $\Phi_\gamma$ be the weak solution of
\begin{align}
\label{pde}
\partial_s\Phi_\gamma(s,x)&=-\frac{\xi''(s)}{2}\bigl(\partial_{xx}\Phi_\gamma(s,x)+\gamma(s)\bigl(\partial_x\Phi_\gamma(s,x)\bigr)^2\bigr)
\end{align}
for $(s,x)\in [0,1)\times\mathbb{R}$ with boundary condition $\Phi_\gamma(1,x)=|x|.$ Here the existence of $\Phi_\gamma$ is assured by Proposition \ref{pde:prop1} below. We call $\Phi_\gamma$ the Parisi PDE solution. Define
$$
\mathcal{P}_h(\gamma)=\Phi_\gamma(0,h)-\frac{1}{2}\int_0^1\xi''(s)s\gamma(s)ds,\,\,\gamma\in\mathcal{U}.
$$
The Parisi formula \cite[Theorem 1]{AC16} asserts that the limiting maximal energy of $H_N^h$ can be written as a variational problem,
\begin{align}\label{parisi}
M(h)=\lim_{N\rightarrow\infty}L_N^h=\inf_{\gamma\in \mathcal{U}}\mathcal{P}_h(\gamma), a.s.
\end{align}
Note that from \cite{AC16}, $\mathcal{P}_h$ is a continuous functional and the minimizer of the variational problem exists. As one will see in Subsection \ref{unique}, this minimizer is indeed unique. It will be called the Parisi measure and denoted by $\gamma_{h} $ throughout the remainder of the paper.


\subsection{Uniqueness}\label{unique}

Uniqueness of the minimizer of the Parisi formula for the free energy of the mixed $p$-spin model was proved in Auffinger-Chen \cite{AC14}. Our main result here is an extension of \cite{AC14} at zero temperature.

\begin{theorem}\label{thm4}
	The Parisi formula \eqref{parisi} has a unique minimizer.
\end{theorem}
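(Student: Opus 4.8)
The plan is to adapt the strict-convexity argument of Auffinger--Chen \cite{AC14} from the positive-temperature setting to the zero-temperature Parisi functional $\mathcal{P}_h$. The overall strategy is: (1) show that the map $\gamma\mapsto \Phi_\gamma(0,h)$ is convex along line segments in $\mathcal{U}$, so that $\mathcal{P}_h$ is convex (the linear term $-\frac12\int_0^1\xi''(s)s\gamma(s)\,ds$ does not affect convexity); (2) prove that this convexity is in fact \emph{strict} unless the two endpoints agree Lebesgue-a.e.; (3) conclude that the minimizer, which exists by \cite{AC16}, is unique. Since $\mathcal{U}$ is a convex subset of $L^1([0,1))$ and $\mathcal{P}_h$ is continuous on it by \cite{AC16}, steps (1)--(3) suffice.

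For step (1), I would fix $\gamma_0,\gamma_1\in\mathcal{U}$, write $\gamma_\theta=(1-\theta)\gamma_0+\theta\gamma_1$, and study $\theta\mapsto \Phi_{\gamma_\theta}(0,h)$. The standard approach is to differentiate the Parisi PDE \eqref{pde} in the parameter $\theta$: setting $\Psi=\partial_\theta\Phi_{\gamma_\theta}$ and $\Upsilon=\partial_{\theta\theta}\Phi_{\gamma_\theta}$, one derives linear parabolic equations for $\Psi$ and $\Upsilon$ whose source terms, after using the Cole--Hopf-type representation and the fact that $\partial_{xx}\Phi_{\gamma_\theta}\ge 0$ (a property of the solution with convex boundary data $|x|$, which I would record via the regularity results in the appendix, Proposition \ref{pde:prop1}), are nonnegative; a maximum-principle / Feynman--Kac argument then gives $\Upsilon(0,h)\ge 0$, i.e. convexity. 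Here one must be careful that $\Phi_\gamma$ is only a weak solution, so the differentiation in $\theta$ and the maximum principle should be justified first for smooth approximations $\gamma\in\mathcal{U}$ with bounded, smooth density (as in \cite{AC14}) and then passed to the limit using continuity of $\mathcal{P}_h$ and the stability estimates for the Parisi PDE.

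For step (2), strict convexity, the key is to track the representation of $\Upsilon(0,h)$ as an expectation, along an auxiliary diffusion $X_s$ driven by $\xi''$ with drift $\gamma_\theta(s)\,\partial_x\Phi_{\gamma_\theta}$, of a nonnegative integrand involving $(\gamma_1(s)-\gamma_0(s))\,\partial_x\Phi_{\gamma_\theta}(s,X_s)$ and $\partial_{xx}\Phi_{\gamma_\theta}$. One shows this expectation vanishes only if $\gamma_1(s)=\gamma_0(s)$ for Lebesgue-a.e.\ $s$, using that $\partial_{xx}\Phi_{\gamma_\theta}(s,\cdot)>0$ strictly on the support of the law of $X_s$ for $s<1$ — this positivity is exactly the place where the argument is most delicate at zero temperature, since the terminal condition $|x|$ is not smooth, unlike the smooth $\log\cosh$ terminal data at positive temperature, so the required strict positivity and regularity of $\partial_{xx}\Phi_\gamma$ for $s<1$ must be extracted from the appendix's PDE regularity results rather than quoted from \cite{AC14} directly. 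I expect this strict-positivity/regularity input to be the main obstacle. Given steps (1)--(2), step (3) is immediate: if $\gamma,\gamma'$ were two minimizers, then $\mathcal{P}_h$ restricted to the segment between them would be constant (both endpoints attaining the minimum) and convex, hence affine; strict convexity then forces $\gamma=\gamma'$ a.e., which is equality in $\mathcal{U}$.
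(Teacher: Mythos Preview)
Your overall strategy---reduce uniqueness to strict convexity of $\gamma\mapsto\Phi_\gamma(0,h)$, then use existence of a minimizer---is exactly the paper's, and step (3) is fine. The problem is in the mechanism you propose for steps (1)--(2).

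The concrete gap is your claim that the source term in the equation for $\Upsilon=\partial_{\theta\theta}\Phi_{\gamma_\theta}$ is nonnegative. Write $\dot\gamma=\gamma_1-\gamma_0$ and $\Psi=\partial_\theta\Phi_{\gamma_\theta}$; differentiating \eqref{pde} twice in $\theta$ gives
\[
\partial_s\Upsilon=-\frac{\xi''}{2}\Bigl(\partial_{xx}\Upsilon+2\gamma_\theta\,\partial_x\Phi_{\gamma_\theta}\,\partial_x\Upsilon
+2\gamma_\theta(\partial_x\Psi)^2+4\dot\gamma\,\partial_x\Phi_{\gamma_\theta}\,\partial_x\Psi\Bigr)
\]
with zero terminal data. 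The forcing $2\gamma_\theta(\partial_x\Psi)^2+4\dot\gamma\,\partial_x\Phi_{\gamma_\theta}\,\partial_x\Psi$ contains the cross term $4\dot\gamma\,\partial_x\Phi_{\gamma_\theta}\,\partial_x\Psi$, which has no definite sign: $\dot\gamma$ changes sign in general, and nothing pins down the sign of $\partial_x\Psi$. Neither $\partial_{xx}\Phi_{\gamma_\theta}\ge 0$ nor any Cole--Hopf substitution removes this term---indeed $\partial_{xx}\Phi$ does not appear in the source at all, so the integrand you describe in step~(2) does not match what the computation actually produces. The maximum-principle/Feynman--Kac argument as written therefore does not go through.

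The paper bypasses this entirely by working with the stochastic optimal control representation (Theorem~\ref{thm0}): $\Phi_\gamma(r,x)=\max_{u\in D[r,1]}F^{r,1}(u,x)$, and for each fixed control $u$ the cost $F^{r,1}(u,x)$ is \emph{affine} in $\gamma$ and convex in $x$. A supremum of affine functions is convex, giving Lemma~\ref{lem4} in one line. For strictness (Lemma~\ref{lem9}) the paper argues by contradiction: if equality held on an interval where $\gamma_0>\gamma_1$, the optimizer $u_{\gamma_\theta}$ would also maximize the problem for $\gamma_0$, hence $u_{\gamma_\theta}=u_{\gamma_0}$ by uniqueness of the optimal control (Lemma~\ref{prop1}); comparing the It\^o decompositions of $\partial_x\Phi$ and $\partial_{xx}\Phi$ along the optimal diffusion (Lemma~\ref{lem}) then forces an equality of integrals that is incompatible with $\gamma_0>\gamma_\theta$ once one knows $\partial_{xx}\Phi(w,\cdot)\not\equiv 0$. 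That last nonvanishing---precisely the delicate zero-temperature point you flag---is handled not by PDE regularity of $\partial_{xx}\Phi$ but by proving in Lemma~\ref{lem3} that $\Phi_\gamma(s,\cdot)$ is strictly convex, via the variational formula and a Girsanov argument showing the terminal variable is supported on all of $\mathbb{R}$. The appendix regularity (Proposition~\ref{pde:prop1}) is used only to justify It\^o's formula and the uniqueness lemma, not to produce a sign for a second $\theta$-derivative.
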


The proof of Theorem \ref{thm4} will be based on \cite{AC14}. The aim is to show that the functional $\gamma\mapsto \Phi_\gamma$ is strictly convex on the space $\mathcal{U}$ via the stochastic optimal control representation for $\Phi_\gamma$ (see Theorem \ref{thm0} below). While the formula considered in \cite{AC14} restricts to those $\gamma$ with $\gamma(1-)\leq 1$ and the boundary condition of the Parisi PDE has nice regularities, the added difficulties in the current situation are that it is possible that $\gamma(1-)=\infty$ and the boundary condition has a nondifferentiable point at $0$. The following proposition shows that we still have good regularity properties on the spatial derivatives of $\Phi_\gamma$ as long as the time variable is away from $1$. This is enough for us to prove Theorem \ref{thm4}. Denote by $\mathcal{U}_d\subset\mathcal{U}$ the set of all $\gamma$'s induced by atomic measures and by $\mathcal{U}_c$ the set of all $\gamma$'s that are induced by measures without atoms. Note that $\gamma\in \mathcal{U}_c$ means that $\gamma$ is continuous on $[0,1).$ 

\begin{proposition}\label{pde:prop1}
	Let $\gamma\in\mathcal{U}$. The following statements hold:
	\begin{itemize}
		\item[$(i)$] The weak solution $\Phi_\gamma$ exists and is unique. Moreover, it satisfies
		\begin{align}\label{lip}
		|\Phi_\gamma(s,x)-\Phi_{\gamma'}(s,x)|\leq \frac{3}{2}\int_0^1\xi''(s)|\gamma(s)-\gamma'(s)|ds
		\end{align}
		for any $(s,x)\in[0,1]\times\mathbb{R}$ and $\gamma,\gamma'\in\mathcal{U}.$
		\item[$(ii)$] For any $k\geq 1$, $\partial_x^k\Phi_\gamma$ exists and is continuous on $[0,1)\times\mathbb{R}$. Furthermore, for any $s_1\in(0,1)$,
		\begin{align*}
		\sup_{(s,x)\in[0,s_1]\times\mathbb{R}}|\partial_x^k\Phi_{\gamma}(s,x)|<F_k(\gamma(s_1)),
		\end{align*}
		where $F_k$ is a continuous function defined on $[0,\infty)$ independent of $\gamma.$
		\item[$(iii)$] In either $\mathcal{U}_d$ or $\mathcal{U}_c$, there exists $(\gamma_n)_{n\geq 1}$ with weak limit $\gamma$ such that for any $k\geq 0,$
		\begin{align*}
		\lim_{n\rightarrow\infty}\sup_{(s,x)\in[0,s_1)\times[-M,M]}\bigl|\partial_x^k\Phi_{\gamma_n}(s,x)-\partial_x^k\Phi_\gamma(s,x)\bigr|=0
		\end{align*}	
		for any $s_1\in(0,1)$ and $M>0.$
	\end{itemize}
\end{proposition}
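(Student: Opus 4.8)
The plan is to build the solution $\Phi_\gamma$ first for $\gamma$ with a smooth, bounded density and then pass to the general case by approximation, using the Cole--Hopf-type transformation to linearise \eqref{pde} and thereby get uniform control on spatial derivatives.

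\textbf{Step 1: the regular case via Cole--Hopf.} Suppose first $\gamma$ is $C^\infty$ and bounded on $[0,1)$ and the terminal data is smooth. Setting $\Psi_\gamma=\exp(\gamma\Phi_\gamma)$ one checks that (after absorbing the $s$-dependence of $\gamma$, or equivalently working on the subintervals where $\gamma$ is differentiated) the quadratic term disappears and $\Psi_\gamma$ solves a linear backward heat equation with variable coefficient $\xi''(s)/2$. Hence $\Psi_\gamma$, and then $\Phi_\gamma$, has a Feynman--Kac / Gaussian-convolution representation, from which smoothness on $[0,1)\times\Reals$ and the comparison principle follow in the standard way. In particular $\partial_x\Phi_\gamma$ inherits the Lipschitz bound $\|\partial_x\Phi_\gamma(s,\cdot)\|_\infty\le\|\partial_x\Phi_\gamma(1,\cdot)\|_\infty$ from the maximum principle applied to the equation satisfied by $\partial_x\Phi_\gamma$ (which is the viscous Burgers equation, for which $L^\infty$ of the solution is nonincreasing in $-s$). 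This already gives $(i)$ for smooth data with the constant $3/2$ coming from the crude bounds $|\Phi_\gamma(1,x)|=|x|$ has derivative bounded by $1$, together with the $L^1$-in-$s$ dependence on $\gamma$ obtained by differentiating the PDE in $\gamma$ and applying Gronwall.

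\textbf{Step 2: higher derivatives, away from $s=1$.} For $(ii)$ I differentiate \eqref{pde} $k$ times in $x$; writing $u=\partial_x\Phi_\gamma$ (which solves Burgers) and $u_k=\partial_x^k\Phi_\gamma$, one gets a linear parabolic equation for $u_k$ with coefficients that are polynomial in $u,u_2,\dots,u_{k-1}$ and in $\gamma(s)$. Starting from $\|u(s,\cdot)\|_\infty\le 1$, an inductive scheme of interior parabolic estimates (Schauder, or the probabilistic representation from Step 1 with the nonlinearity treated as a known forcing term) yields a bound on $\sup_{[0,s_1]\times\Reals}|u_k|$ depending only on $\xi$, on $k$, and on $\gamma$ through $\int_0^{1}\xi''\gamma$ or, after optimising, through $\gamma(s_1)$ — here one uses that $\gamma$ is nondecreasing so $\gamma(s)\le\gamma(s_1)$ on $[0,s_1]$, and that the "time to go'' $1-s_1>0$ gives a fixed smoothing window. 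This produces the continuous envelope $F_k$. The dependence being only on $\gamma(s_1)$ rather than on $\gamma(1-)$ is exactly what allows $\gamma(1-)=\infty$; the possibly nondifferentiable terminal datum $|x|$ is harmless because one only ever evaluates derivatives at times $s\le s_1<1$, after the heat semigroup over a window of length $\ge 1-s_1$ has smoothed it.

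\textbf{Step 3: approximation, and the general case.} For general $\gamma\in\mathcal{U}$ pick $\gamma_n\to\gamma$ in $L^1$ with $\gamma_n$ smooth and bounded (mollification for $\mathcal{U}_c$; for $\mathcal{U}_d$ one approximates each atom by a narrow bump, keeping $\gamma_n$ atomic-in-the-limit but smooth for finite $n$ — the statement only asks for \emph{some} approximating sequence in each class). The Lipschitz bound \eqref{lip} of Step 1 shows $\Phi_{\gamma_n}$ is Cauchy in $C([0,1]\times\Reals)$, defining $\Phi_\gamma$ as the uniform limit and giving existence; \eqref{lip} for all $\gamma,\gamma'$ passes to the limit. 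Uniqueness of the weak solution follows from the same $L^1$-contraction estimate applied to two putative solutions, or from uniqueness of the viscosity solution of the Burgers equation for $\partial_x\Phi$. For $(ii)$ in the limit, the uniform bounds $|\partial_x^k\Phi_{\gamma_n}|<F_k(\gamma_n(s_1))\to F_k(\gamma(s_1))$ together with interior parabolic compactness give local uniform convergence of all derivatives on $[0,s_1]\times[-M,M]$, hence $\partial_x^k\Phi_\gamma$ exists, is continuous on $[0,1)\times\Reals$, and obeys the bound; this is simultaneously statement $(iii)$ (upgrading $k\ge1$ there to $k\ge0$ is immediate since $\Phi_{\gamma_n}\to\Phi_\gamma$ uniformly by \eqref{lip}).

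\textbf{Main obstacle.} The delicate point is Step 2: making the higher-derivative bounds depend on $\gamma$ \emph{only through $\gamma(s_1)$} while $\gamma$ may blow up as $s\uparrow1$. This requires carefully quantifying how the $\gamma(s)(\partial_x\Phi)^2$ term feeds into the estimates for $\partial_x^k\Phi$ on $[0,s_1]$ and exploiting that on this interval $\gamma\le\gamma(s_1)$ and that the relevant parabolic estimates only see the fixed window $[0,s_1]$ — one must be sure the constants do not secretly involve $\gamma(1-)$ through, say, the terminal time. Handling the atomic case $\mathcal{U}_d$, where $\gamma$ has jumps and the PDE is understood piecewise with matching conditions at the atoms, is the other technical nuisance, but it is routine once the smooth case is in hand.
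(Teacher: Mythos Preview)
Your overall strategy---solve for a regular subclass, derive uniform derivative bounds depending only on $\gamma(s_1)$, then approximate and pass to the limit via Arzel\`a--Ascoli---matches the paper's. The implementation differs in several places, and one of them is a genuine gap.

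\textbf{The gap in Step 1.} Your Cole--Hopf substitution $\Psi_\gamma=\exp(\gamma\Phi_\gamma)$ does \emph{not} linearise \eqref{pde} when $\gamma$ depends on $s$: the $s$-derivative of $\gamma$ produces an extra term $\gamma'(s)\Phi_\gamma\,\Psi_\gamma=(\gamma'/\gamma)\Psi_\gamma\log\Psi_\gamma$, so $\Psi_\gamma$ still solves a nonlinear equation, and the parenthetical about ``absorbing the $s$-dependence'' does not repair this. The paper sidesteps the issue by taking the base class to be $\mathcal{U}_d$ (piecewise constant $\gamma$), where Cole--Hopf works cleanly on each interval of constancy and the solution is built by explicit iteration; this is what makes the bound $|\partial_x\Phi_\gamma|\le 1$ and the continuity of $s\mapsto\|\partial_x^k\Phi_\gamma(s,\cdot)\|_\infty$ immediate. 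Your route is salvageable---for smooth bounded $\gamma$ one can appeal to standard semilinear parabolic theory instead of Cole--Hopf---but not as written.

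\textbf{Other differences worth noting.} For the Lipschitz bound \eqref{lip}, the paper does not differentiate in $\gamma$ and use Gronwall; it reads the bound off the stochastic optimal control representation (Theorem~\ref{thm0} with terminal time $1$), where the constant $3/2$ appears transparently: the Lipschitz constant $1$ of $|x|$ bounds the change in the drift term by $\int\xi''|\gamma-\gamma'|$, and $|u|\le 1$ bounds the change in the cost $\frac12\int\xi''\gamma u^2$ by $\frac12\int\xi''|\gamma-\gamma'|$. For the higher-derivative envelopes $F_k$, where you invoke Schauder estimates in the abstract, the paper proves a self-contained Duhamel-plus-Gronwall a~priori estimate (their Lemma~\ref{pde:lem}) for linear equations $\partial_r u=\partial_{xx}u+\kappa_1\partial_x u+\kappa_2$, and applies it inductively to the equation for $\partial_x^k\Phi_\gamma$ after the time-change $r=(\xi'(1)-\xi'(s))/2$; this makes explicit that the constants depend only on $\|\kappa_1\|_\infty$, hence on $\gamma(s_1)$, which is exactly the ``main obstacle'' you flag. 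Finally, $(iii)$ asks for approximating sequences \emph{drawn from} $\mathcal{U}_d$ (respectively $\mathcal{U}_c$) for an arbitrary $\gamma\in\mathcal{U}$, not for approximating a given $\gamma\in\mathcal{U}_d$ by smooth objects; your Step~3 has this slightly backwards.
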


We defer the proof of Proposition \ref{pde:prop1} to the appendix. In what follows, we recall the stochastic optimal control for $\Phi_\gamma$ from \cite{AC16}. Let $W=(W(w))_{0\leq w\leq 1}$ be a standard Brownian motion. For $0\leq r<s\leq 1$, denote by $D[r,s]$ the collection of all progressively measurable processes $u$ on $[r,s]$ with respect to the filtration generated by $W$ and satisfying $\sup_{0\leq s\leq 1}|u(s)|\leq 1.$ We equip the space $D[r,s]$ with the metric
\begin{align}
\label{metric}
d_0(u,u'):=\Bigl(\e \int_{r}^s\e |u(w)-u'(w)|^2dw\Bigr)^{1/2}.
\end{align}
 Let $\gamma\in\mathcal{U}.$ For any $x\in\mathbb{R}$ and $u\in D[r,s],$ define
\begin{align*}
F^{r,s}(u,x)=\e\left[C^{r,s}(u,x)-L^{r,s}(u)\right],
\end{align*}
where
\begin{align*}
\label{thm0:eq2}
C^{r,s}(u,x)&=\Phi_\gamma\left(t,x+\int_r^s\gamma(w)\xi''(w)u(w)dw+\int_r^s\xi''(w)^{1/2}dW(w)\right),\\
L^{r,s}(u)&=\frac{1}{2}\int_r^s\gamma(w)\xi''(w)u(w)^2dw.
\end{align*}

\begin{theorem}\label{thm0} Let $0\leq r\leq s\leq 1.$ For any $\gamma\in\mathcal{U}$,
	\begin{equation}\label{MaxFormula}
	\Phi_\gamma(r,x)=\max\left\{F^{r,s}(u,x)|u\in D[r,s]\right\},
	\end{equation}
	where the maximum in \eqref{MaxFormula} is attained by
	\begin{align}\label{max}
	u_\gamma(w)&=\partial_x\Phi_\gamma(w,X_\gamma(w)).
	\end{align}
	Here $(X_\gamma(w))_{r\leq w\leq s}$ is the strong solution to
	\begin{align*}
	dX_\gamma(w)&=\gamma(w)\xi''(w)\partial_x\Phi_\gamma(w,X_\gamma(w))dw+\xi''(w)^{1/2}dW(w),\\
	X_\gamma(r)&=x.
	\end{align*}	
\end{theorem}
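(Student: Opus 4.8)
\textbf{Proof proposal for Theorem \ref{thm0}.}

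The plan is to establish the stochastic optimal control representation \eqref{MaxFormula} by the standard Hamilton--Jacobi--Bellman route: define $V(r,x)$ to be the right-hand side of \eqref{MaxFormula}, verify that the claimed value function $\Phi_\gamma(r,x)$ satisfies the dynamic programming principle together with the HJB equation associated with the control problem, and then use a verification argument to identify $V=\Phi_\gamma$ with the maximizer given by \eqref{max}. First I would assume, via Proposition \ref{pde:prop1}, that $\partial_x\Phi_\gamma$ and $\partial_{xx}\Phi_\gamma$ are continuous and locally bounded on $[0,s_1]\times\mathbb{R}$ for every $s_1<1$; this is exactly the regularity that legitimizes the It\^o computations below on any time interval bounded away from $1$, and the boundary case $s=1$ is handled separately since $\Phi_\gamma(1,\cdot)=|\cdot|$ is Lipschitz and one can pass to the limit $s_1\uparrow 1$ using the Lipschitz bound \eqref{lip}-type controls.

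The core computation is an application of It\^o's formula. For an arbitrary admissible control $u\in D[r,s]$, let $Y^u(w):=x+\int_r^w\gamma\xi'' u\,dw'+\int_r^w (\xi'')^{1/2}dW$, so that $C^{r,s}(u,x)=\Phi_\gamma(s,Y^u(s))$. Applying It\^o to $w\mapsto\Phi_\gamma(w,Y^u(w))$ and using the PDE \eqref{pde} to substitute for $\partial_s\Phi_\gamma$, the drift terms collect into
\[
\partial_s\Phi_\gamma+\gamma\xi'' u\,\partial_x\Phi_\gamma+\tfrac{\xi''}{2}\partial_{xx}\Phi_\gamma
=-\tfrac{\xi''}{2}\gamma(\partial_x\Phi_\gamma)^2+\gamma\xi'' u\,\partial_x\Phi_\gamma
=-\tfrac{\gamma\xi''}{2}\bigl(u-\partial_x\Phi_\gamma\bigr)^2+\tfrac{\gamma\xi''}{2}u^2,
\]
completing the square. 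Taking expectations (the stochastic integral is a martingale thanks to the local boundedness of $\partial_x\Phi_\gamma$) yields
\[
\Phi_\gamma(r,x)=\e\Bigl[\Phi_\gamma(s,Y^u(s))\Bigr]-\tfrac12\e\int_r^s\gamma\xi'' u^2\,dw+\tfrac12\e\int_r^s\gamma\xi''\bigl(u-\partial_x\Phi_\gamma(w,Y^u(w))\bigr)^2dw
= F^{r,s}(u,x)+\tfrac12\e\int_r^s\gamma\xi''(u-\partial_x\Phi_\gamma)^2dw.
\]
Since $\gamma\xi''\geq 0$, this shows $\Phi_\gamma(r,x)\geq F^{r,s}(u,x)$ for every $u$, hence $\Phi_\gamma(r,x)\geq V(r,x)$. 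For the reverse inequality I would choose $u=u_\gamma$ as in \eqref{max}; then the square term vanishes identically and equality holds, provided the SDE for $X_\gamma$ has a well-defined strong solution and the resulting $u_\gamma$ is indeed admissible, i.e.\ progressively measurable and bounded by $1$. Existence and uniqueness of the strong solution follows from the local Lipschitz continuity of $x\mapsto\gamma(w)\xi''(w)\partial_x\Phi_\gamma(w,x)$ (again Proposition \ref{pde:prop1}) together with a linear-growth-type a priori bound; the bound $|u_\gamma|\leq 1$ is precisely the statement $\|\partial_x\Phi_\gamma\|_\infty\leq 1$, which should follow from the boundary condition $\partial_x\Phi_\gamma(1,\cdot)=\mathrm{sgn}(\cdot)$ having modulus $\leq 1$ and a maximum-principle/monotonicity argument for the PDE \eqref{pde} — on this point I would cite the corresponding facts from \cite{AC16,AC14}.

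The main obstacle is the behavior at the terminal time $s=1$ combined with the possibility $\gamma(1-)=\infty$: the coefficient $\gamma(w)\xi''(w)$ need not be integrable up to $1$, the derivatives $\partial_x^k\Phi_\gamma$ may blow up as $s\uparrow 1$, and the boundary datum $|x|$ is not differentiable at $0$, so the It\^o computation above is only valid on $[r,s_1]$ with $s_1<1$. To reach $s=1$ I would run the argument on $[r,s_1]$, obtaining $\Phi_\gamma(r,x)=\max\{F^{r,s_1}(u,x):u\in D[r,s_1]\}$ with the stated optimizer, and then let $s_1\uparrow 1$: the convergence of $F^{r,s_1}$ to $F^{r,1}$ follows from dominated convergence using $|\partial_x\Phi_\gamma|\le 1$ to control $\int_{s_1}^1\gamma\xi'' u^2\,dw$ against $\int_{s_1}^1\gamma\xi''\,dw$, which is finite because $\int_0^1\gamma<\infty$ and $\xi''$ is bounded on $[0,1]$ (indeed $\int\gamma\xi''<\infty$ by the summability assumption on $(c_p)$); and the process $Y^u$ converges in $L^2$ so $C^{r,s_1}(u,x)\to C^{r,1}(u,x)$ by continuity and the Lipschitz property of $|\cdot|$. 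A parallel approximation, using Proposition \ref{pde:prop1}$(iii)$ to replace $\gamma$ by $\gamma_n$ with uniformly convergent spatial derivatives, lets one reduce to the regular case treated in \cite{AC16} and then pass to the limit in both sides of \eqref{MaxFormula}. This limiting procedure, rather than the It\^o calculation itself, is where the real work lies.
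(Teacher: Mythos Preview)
Your proposal is correct and follows essentially the same route as the paper: the paper remarks that the result is ``essentially taken from \cite{AC16}, where it was shown to be valid for $\gamma\in\mathcal{U}_c$ by a direct application of It\^o's formula,'' and that for general $\gamma\in\mathcal{U}$ one uses Proposition~\ref{pde:prop1} together with an approximation argument as in \cite[Theorem~3]{AC14}. Your verification-via-It\^o computation (completing the square to isolate the nonnegative defect $\tfrac12\e\int\gamma\xi''(u-\partial_x\Phi_\gamma)^2$) and your handling of the terminal time and general $\gamma$ by approximation through $s_1\uparrow 1$ and Proposition~\ref{pde:prop1}$(iii)$ are exactly this strategy spelled out.
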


Theorem \ref{thm0} is essentially taken from \cite{AC16}, where it was shown to be valid for $\gamma\in \mathcal{U}_c$ by a direct application of It\^{o}'s formula. For arbitrary $\gamma\in \mathcal{U}$, it remains true by using Proposition \ref{pde:prop1} combined with an approximation argument similar to the proof of \cite[Theorem 3]{AC14}. We omit the details here. Next, we show that $\Phi_\gamma$ is convex in $\gamma,$ which will play an essential role in proving the strict convexity of $\gamma\mapsto\Phi_\gamma.$

\begin{lemma}
	\label{lem4}
	For $\gamma  _0,\gamma  _1\in\mathcal{U}$, $x_0,x_1\in\mathbb{R}$ and $\theta\in[0,1],$ denote
	\begin{align}
	\begin{split}\label{eq-2}
	\gamma  _\theta&=(1-\theta)\gamma  _0+\theta\gamma  _1,\\
	x_\theta&=(1-\theta)x_0+\theta x_1.
	\end{split}
	\end{align}
	Then
	\begin{align}
	\label{thm1:eq1}
	\Phi_{\gamma_\theta}(r,x_\theta)&\leq (1-\theta)\Phi_{\gamma_0}(r,x_0)+\theta\Phi_{\gamma_1}(r,x_1)
	\end{align}
	for any $r\in[0,1]$.
\end{lemma}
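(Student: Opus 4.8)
The plan is to deduce the convexity inequality \eqref{thm1:eq1} directly from the stochastic optimal control representation in Theorem \ref{thm0}, exploiting the fact that the value function $\Phi_{\gamma_\theta}$ is a supremum over controls and that the running cost and terminal cost behave well under the convex combination \eqref{eq-2}. First I would fix $r\in[0,1]$, fix $s=1$ so that the terminal condition is simply $\Phi_\gamma(1,y)=|y|$, and take $u\in D[r,1]$ to be the \emph{optimal} control for the $\gamma_\theta$ problem started at $x_\theta$, i.e. the process $u_{\gamma_\theta}$ of \eqref{max} for which $\Phi_{\gamma_\theta}(r,x_\theta)=F^{r,1}(u,x_\theta)$. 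The point is that a \emph{single} admissible control $u$ can be used simultaneously in the $\gamma_0$ and $\gamma_1$ variational problems, because $D[r,1]$ does not depend on $\gamma$; by \eqref{MaxFormula} this yields the two lower bounds $\Phi_{\gamma_0}(r,x_0)\ge F_0^{r,1}(u,x_0)$ and $\Phi_{\gamma_1}(r,x_1)\ge F_1^{r,1}(u,x_1)$, where $F_i^{r,1}$ is built from $\gamma_i$ and $\Phi_{\gamma_i}$.

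The heart of the argument is then to show
\begin{align*}
F^{r,1}_{\gamma_\theta}(u,x_\theta)\;\le\;(1-\theta)F^{r,1}_{\gamma_0}(u,x_0)+\theta F^{r,1}_{\gamma_1}(u,x_1),
\end{align*}
after which combining with the two lower bounds above finishes the proof. To see this I would compare the three controlled diffusions driven by the \emph{same} Brownian motion $W$: for $i\in\{0,1,\theta\}$ set $Y_i=x_i+\int_r^1\gamma_i(w)\xi''(w)u(w)\,dw+\int_r^1\xi''(w)^{1/2}dW(w)$. Because $\gamma_\theta=(1-\theta)\gamma_0+\theta\gamma_1$ and $x_\theta=(1-\theta)x_0+\theta x_1$ and the stochastic integral term is identical in all three, we get the pathwise identity $Y_\theta=(1-\theta)Y_0+\theta Y_1$. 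Hence by convexity of the terminal map $y\mapsto|y|$ (this is exactly where the nondifferentiable terminal condition is harmless — convexity is all that is needed), $|Y_\theta|\le(1-\theta)|Y_0|+\theta|Y_1|$, so $\e\,C^{r,1}_{\gamma_\theta}(u,x_\theta)\le(1-\theta)\e\,C^{r,1}_{\gamma_0}(u,x_0)+\theta\,\e\,C^{r,1}_{\gamma_1}(u,x_1)$. The running-cost term is handled by linearity: $L^{r,1}_{\gamma_\theta}(u)=\tfrac12\int_r^1\gamma_\theta(w)\xi''(w)u(w)^2dw=(1-\theta)L^{r,1}_{\gamma_0}(u)+\theta L^{r,1}_{\gamma_1}(u)$ since $\gamma\mapsto L^{r,1}_\gamma(u)$ is linear in $\gamma$ for fixed $u$. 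Subtracting gives the displayed inequality for $F$.

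I should be a little careful about one structural point: in Theorem \ref{thm0} the term $C^{r,s}(u,x)$ is $\Phi_\gamma$ evaluated at the terminal time of the interval, so I really do want to take $s=1$ and then use $\Phi_\gamma(1,\cdot)=|\cdot|$; alternatively one can run a dynamic-programming / induction argument over $s\uparrow 1$, but the clean choice is simply $s=1$. I also need the integrals defining $Y_i$ and $L^{r,1}_{\gamma_i}(u)$ to be finite and the expectations to make sense; this follows from $|u|\le 1$, from $\int_0^1\gamma_i(w)dw<\infty$ together with the local boundedness of $\xi''$ on $[0,1)$ near the bad endpoint being absorbed because $\gamma_i\in\mathcal{U}$ already encodes $L^1$ integrability, and from the linear growth of $|\cdot|$ so that $\e|Y_i|<\infty$. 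The main (mild) obstacle is thus purely bookkeeping: making sure the optimal control $u_{\gamma_\theta}$ from \eqref{max} is genuinely an element of $D[r,1]$ — which is asserted by Theorem \ref{thm0} — and that all three instances $F^{r,1}_{\gamma_i}(u,x_i)$ are well defined for this common $u$; the substantive content of the lemma, namely $Y_\theta=(1-\theta)Y_0+\theta Y_1$ pathwise and convexity of $|\cdot|$, is immediate once the representation is in place. No strict convexity is claimed here, so convexity of the absolute value (rather than strict convexity) suffices; the strict convexity of $\gamma\mapsto\Phi_\gamma$ needed for Theorem \ref{thm4} will come later from a separate argument identifying when equality can hold.
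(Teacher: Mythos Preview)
Your proposal is correct and follows essentially the same route as the paper: use the variational representation of Theorem~\ref{thm0} with $s=1$, exploit the linearity of $L^{r,1}$ in $\gamma$ and the convexity of the terminal condition $|\cdot|$ to get $F^{r,1}_{\gamma_\theta}(u,x_\theta)\le(1-\theta)F^{r,1}_{\gamma_0}(u,x_0)+\theta F^{r,1}_{\gamma_1}(u,x_1)$, and conclude. The only cosmetic difference is that the paper establishes this inequality for \emph{every} $u\in D[r,1]$ and then takes the supremum, whereas you fix the optimal control $u_{\gamma_\theta}$ up front; working with a generic $u$ lets you sidestep the bookkeeping about existence of the maximizer entirely, but both versions are sound.
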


\begin{proof}
	Let $\gamma  _0,\gamma  _1\in\mathcal{U}$, $x_0,x_1\in\mathbb{R}$, $\theta\in[0,1]$, $0\leq r\leq s\leq 1$ and $u\in D[r,s].$ Let $a = 0, \theta,$ or $1$. Denote by
	\begin{align*}
	F_a^{r,s},C_a^{r,s},L_a^{r,s},
	\end{align*}
	the functionals defined in the variational formulas corresponding respectively to $\gamma  _a$:
	\begin{align}
	\begin{split}
	\label{eq16}
	\Phi_{\gamma_a}(s,x_a)&=\max\{F_\theta^{r,s}(u,x_a)|D[r,s]\}.
	\end{split}
	\end{align}
	Let $0\leq r\leq 1$ and take $s=1$. Suppose that $u\in D[r,1].$ Observe that
	\begin{align*}
	L_\theta^{r,1}(u)&=(1-\theta) L_0^{r,1}(u)+\theta L_1^{r,1}(u)
	\end{align*}
	and from the convexity of $|x|$,
	\begin{align*}
	C_\theta^{r,1}(u,x_\theta)&\leq (1-\theta) C_0^{r,1}(u,x_0)+\theta C_1^{r,1}(u,x_1).
	\end{align*}
	These together imply
	\begin{align*}
	F_\theta^{r,1}(u,x_\theta)&\leq (1-\theta) F_0^{r,1}(u,x_0)+\theta F_1^{r,1}(u,x_1).
	\end{align*}
	Since this is true for any $u\in D[r,1],$ the representation formula \eqref{eq16} gives
	\eqref{thm1:eq1}.
\end{proof}

In order to prove strict convexity, we need two results regarding the uniqueness as well as some properties of the optimizer in \eqref{MaxFormula}. The first result gives the uniqueness of the maximizer $u_\gamma$ if $\gamma(r)>0.$

\begin{lemma}[Uniqueness]
	\label{prop1}
	Let $\gamma\in\mathcal{U}$ and $0\leq r<s\leq 1$ with $\gamma(r)>0.$ If $u$ attains the maximal value of the variational representation \eqref{MaxFormula}, then $u=u_\gamma$.
\end{lemma}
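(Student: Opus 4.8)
\textbf{Proof strategy for Lemma \ref{prop1}.}

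The plan is to show that the map $u\mapsto F^{r,s}(u,x)$ is strictly concave on $D[r,s]$, which immediately forces uniqueness of the maximizer. To see strict concavity, fix two distinct processes $u_0,u_1\in D[r,s]$ and $\theta\in(0,1)$, and set $u_\theta=(1-\theta)u_0+\theta u_1$. As in the proof of Lemma \ref{lem4}, the Lagrangian term $L^{r,s}$ is \emph{linear} in the trajectory after accounting for the quadratic cost: more precisely, the cost term $u\mapsto \frac{1}{2}\int_r^s\gamma(w)\xi''(w)u(w)^2\,dw$ is convex, and it is \emph{strictly} convex in the directions where $\gamma(w)\xi''(w)>0$. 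Since $\gamma$ is nondecreasing, $\gamma(r)>0$ implies $\gamma(w)>0$ for all $w\in[r,s]$, and $\xi''(w)>0$ for $w\in(0,1)$ (because $\xi$ has a nonzero coefficient $c_p$); hence $\gamma(w)\xi''(w)>0$ on a set of full Lebesgue measure in $[r,s]$. Consequently, if $u_0\neq u_1$ in $D[r,s]$ (i.e. $d_0(u_0,u_1)>0$, so they differ on a positive-measure set), then
\[
L^{r,s}(u_\theta) < (1-\theta)L^{r,s}(u_0)+\theta L^{r,s}(u_1).
\]
On the other hand, the terminal term $C^{r,s}(u,x)$ is convex in $u$: the argument of $\Phi_\gamma$ inside $C^{r,s}$ is affine in $u$, and $\Phi_\gamma(s,\cdot)$ is convex in its spatial variable (it equals $|x|$ when $s=1$, and for $s<1$ convexity follows from the maximum representation in Theorem \ref{thm0}, or from $\partial_{xx}\Phi_\gamma\geq 0$, which is a consequence of Proposition \ref{pde:prop1}(ii) together with the PDE and the convex boundary data). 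Therefore $C^{r,s}(u_\theta,x)\leq(1-\theta)C^{r,s}(u_0,x)+\theta C^{r,s}(u_1,x)$. Subtracting, and using that $F^{r,s}(u,x)=\e[C^{r,s}(u,x)-L^{r,s}(u)]$, we get
\[
F^{r,s}(u_\theta,x) > (1-\theta)F^{r,s}(u_0,x)+\theta F^{r,s}(u_1,x),
\]
i.e.\ $F^{r,s}(\cdot,x)$ is strictly concave. A strictly concave functional on the convex set $D[r,s]$ has at most one maximizer, so any maximizing $u$ must coincide with the known maximizer $u_\gamma$ from Theorem \ref{thm0}. (The strict inequality requires checking that the contribution of the strictly convex cost term is not canceled by the terminal term, but since one term is strict and the other is merely $\leq$, the net inequality is strict; one should also confirm $C^{r,s}$ stays finite, which follows from the Lipschitz/growth control in Proposition \ref{pde:prop1}(i).)

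The main obstacle I anticipate is the regularity and convexity of $\Phi_\gamma(s,\cdot)$ for $s<1$ and for general $\gamma\in\mathcal{U}$, including the case $\gamma(1-)=\infty$; this is exactly what Proposition \ref{pde:prop1} is designed to supply, so one should invoke it carefully rather than reprove it. A secondary subtlety is the reduction from ``$u$ attains the max'' to ``$u=u_\gamma$ as elements of $D[r,s]$'': strict concavity gives uniqueness in the $d_0$-metric, i.e.\ $u(w)=u_\gamma(w)$ for a.e.\ $w$ almost surely, which is the intended meaning of equality in this space. One also needs to note that the hypothesis $\gamma(r)>0$ is used precisely to guarantee $\gamma(w)\xi''(w)>0$ a.e.\ on $[r,s]$; without it the cost term could degenerate and concavity would only be non-strict, consistent with the way the lemma is stated.
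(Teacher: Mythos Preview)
Your argument contains a sign error that breaks the proof. You correctly observe that $L^{r,s}(u)=\frac{1}{2}\int_r^s\gamma\xi''u^2\,dw$ is strictly convex in $u$ (using $\gamma(r)>0$), and that $C^{r,s}(u,x)$ is \emph{convex} in $u$ because the spatial argument is affine in $u$ and $\Phi_\gamma(s,\cdot)$ is convex. But then $F^{r,s}=C^{r,s}-L^{r,s}$ is a convex functional minus a strictly convex functional, which is in general neither concave nor convex; your claimed strict concavity does not follow. Concretely, the one-dimensional toy $u\mapsto |u|-\tfrac12 u^2$ on $[-1,1]$ (mimicking the $s=1$ boundary) is not concave: it has a strict interior minimum at $0$ and is maximized at both endpoints. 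So strict concavity of $F^{r,s}$ simply fails, and with it your uniqueness argument.

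The approach the paper has in mind (it cites \cite[Lemma~5]{C14}) is a verification-type computation rather than a global concavity argument. Using the regularity from Proposition~\ref{pde:prop1}$(ii)$, one applies It\^o's formula to $\Phi_\gamma(w,X^u(w))$ along the controlled state $dX^u=\gamma\xi''u\,dw+\sqrt{\xi''}\,dW$, $X^u(r)=x$. Because $\Phi_\gamma$ solves the PDE \eqref{pde}, the drift terms combine to give the exact identity
\[
\Phi_\gamma(r,x)-F^{r,s}(u,x)=\frac{1}{2}\,\e\int_r^s\gamma(w)\xi''(w)\bigl(u(w)-\partial_x\Phi_\gamma(w,X^u(w))\bigr)^2\,dw.
\]
This is nonnegative, recovers Theorem~\ref{thm0}, and vanishes iff $u(w)=\partial_x\Phi_\gamma(w,X^u(w))$ for a.e.\ $w$ a.s.\ (here $\gamma(r)>0$ ensures $\gamma\xi''>0$ a.e.\ on $[r,s]$, exactly as you noted). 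The last condition forces $X^u$ to solve the same SDE as $X_\gamma$ with the same initial data, hence $X^u=X_\gamma$ and $u=u_\gamma$ in $D[r,s]$. The hypothesis $\gamma(r)>0$ is used in the same place you identified, but the mechanism is a pointwise completion-of-squares in the Hamiltonian, not global concavity of $F^{r,s}$.
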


Lemma \ref{prop1} is proved using Proposition~\ref{pde:prop1}$(ii)$ in an argument identical to that of \cite[Lemma 5]{C14}.  We will not reproduce the details here.
Lemma \ref{lem} below gives the usual results that $\partial_x\Phi_\gamma(w,X_\gamma(w))$ is a martingale and $\partial_{xx}\Phi_{\gamma}(w,X_\gamma(w))$ is a semi-martingale.

\begin{lemma}\label{lem}
	Let $\gamma  \in\mathcal{U}$, $0\leq r\leq s<1$, and $x\in\mathbb{R}$. For any $r\leq a\leq b\leq s$, we have
	\begin{align}\label{eq10}
	\partial_x\Phi_\gamma (b,X_\gamma(b))-\partial_x\Phi_\gamma (a,X_\gamma(a))&=\int_a^b\xi''  (w)^{1/2}\partial_{xx}\Phi_\gamma (w,X_\gamma(w))dW(w)
	\end{align}
	and
	\begin{align}
	\begin{split}\label{eq11}
	&\partial_{xx}\Phi_\gamma (b,X_\gamma(b))-\partial_{xx}\Phi_\gamma (a,X_\gamma(a))\\
	&=-\int_a^b\gamma  (w)\xi''  (w)(\partial_{xx}\Phi_\gamma (w,X_\gamma(w)))^2dr+\int_a^b\xi''  (w)^{1/2}\partial_{x}^3\Phi_\gamma (w,X_\gamma(w))dW(w).
	\end{split}
	\end{align}
\end{lemma}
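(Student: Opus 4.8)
\textbf{Proof proposal for Lemma \ref{lem}.}

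The plan is to obtain both identities \eqref{eq10} and \eqref{eq11} by applying It\^o's formula to the processes $w\mapsto \partial_x\Phi_\gamma(w,X_\gamma(w))$ and $w\mapsto\partial_{xx}\Phi_\gamma(w,X_\gamma(w))$ along the diffusion $X_\gamma$, and then simplifying the drift terms using the Parisi PDE \eqref{pde}. Since we only work on a time interval $[r,s]$ with $s<1$, Proposition \ref{pde:prop1}$(ii)$ guarantees that all spatial derivatives $\partial_x^k\Phi_\gamma$ up to any finite order exist, are continuous on $[0,s]\times\mathbb{R}$, and are uniformly bounded there by $F_k(\gamma(s))$. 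The only missing regularity ingredient is control of the time derivatives $\partial_s\partial_x^k\Phi_\gamma$; but the PDE \eqref{pde} expresses $\partial_s\Phi_\gamma$ in terms of $\partial_{xx}\Phi_\gamma$ and $(\partial_x\Phi_\gamma)^2$, and differentiating \eqref{pde} in $x$ repeatedly expresses $\partial_s\partial_x^k\Phi_\gamma$ as a polynomial in the spatial derivatives $\partial_x^j\Phi_\gamma$, $j\le k+2$, with coefficients involving $\xi''(s)$ and $\gamma(s)$. Hence these mixed derivatives are also locally bounded on $[0,s]\times\mathbb{R}$, so $\partial_x\Phi_\gamma$ and $\partial_{xx}\Phi_\gamma$ are $C^{1,2}$ on $[0,s]\times\mathbb{R}$ in the sense needed to apply It\^o's formula.

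For \eqref{eq10}: apply It\^o's formula to $\partial_x\Phi_\gamma(w,X_\gamma(w))$ using $dX_\gamma(w)=\gamma(w)\xi''(w)\partial_x\Phi_\gamma(w,X_\gamma(w))dw+\xi''(w)^{1/2}dW(w)$. The result is that the $dw$-drift equals
$$
\partial_s\partial_x\Phi_\gamma+\gamma\xi''(\partial_x\Phi_\gamma)(\partial_{xx}\Phi_\gamma)+\frac{\xi''}{2}\partial_x^3\Phi_\gamma,
$$
all evaluated at $(w,X_\gamma(w))$, while the $dW$-term is $\xi''(w)^{1/2}\partial_{xx}\Phi_\gamma(w,X_\gamma(w))\,dW(w)$. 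Differentiating the PDE \eqref{pde} once in $x$ gives $\partial_s\partial_x\Phi_\gamma=-\frac{\xi''}{2}\bigl(\partial_x^3\Phi_\gamma+2\gamma(\partial_x\Phi_\gamma)(\partial_{xx}\Phi_\gamma)\bigr)$, which exactly cancels the drift, leaving only the stochastic integral; this is \eqref{eq10}. (The local martingale is a true martingale on $[r,s]$ because $\partial_{xx}\Phi_\gamma$ is bounded there.) For \eqref{eq11}: apply It\^o's formula to $\partial_{xx}\Phi_\gamma(w,X_\gamma(w))$ in the same way; the drift is $\partial_s\partial_{xx}\Phi_\gamma+\gamma\xi''(\partial_x\Phi_\gamma)(\partial_x^3\Phi_\gamma)+\frac{\xi''}{2}\partial_x^4\Phi_\gamma$ and the martingale part is $\xi''(w)^{1/2}\partial_x^3\Phi_\gamma(w,X_\gamma(w))dW(w)$. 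Differentiating \eqref{pde} twice in $x$ yields
$$
\partial_s\partial_{xx}\Phi_\gamma=-\frac{\xi''}{2}\Bigl(\partial_x^4\Phi_\gamma+2\gamma(\partial_{xx}\Phi_\gamma)^2+2\gamma(\partial_x\Phi_\gamma)(\partial_x^3\Phi_\gamma)\Bigr),
$$
so after cancellation the drift collapses to $-\gamma\xi''(\partial_{xx}\Phi_\gamma)^2$, giving \eqref{eq11} (note the $dr$ in the statement is the integration variable, i.e.\ $dw$).

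The main obstacle is not the It\^o computation itself, which is routine once the derivatives are known to exist, but rather justifying that It\^o's formula applies — i.e.\ that $\partial_x\Phi_\gamma$ and $\partial_{xx}\Phi_\gamma$ have enough joint regularity in $(s,x)$ and that the relevant stochastic integrals are genuine (square-integrable) martingales. For general $\gamma\in\mathcal{U}$, including the case $\gamma(1-)=\infty$, one cannot work up to $s=1$, but since the statement restricts to $s<1$ and $\gamma$ is nondecreasing, $\gamma(s)<\infty$ and all the bounds $F_k(\gamma(s))$ from Proposition \ref{pde:prop1}$(ii)$ are finite and uniform on $[0,s]\times\mathbb{R}$. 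If a cleaner route is desired, one can instead first prove the identities for $\gamma\in\mathcal{U}_c$ (where \eqref{pde} holds classically and the argument above is immediate) and then pass to general $\gamma$ via the approximation in Proposition \ref{pde:prop1}$(iii)$, using the uniform convergence of $\partial_x^k\Phi_{\gamma_n}\to\partial_x^k\Phi_\gamma$ on $[0,s_1)\times[-M,M]$ together with standard stability estimates for the SDE defining $X_{\gamma_n}$ to pass to the limit in both the drift and the stochastic integral. This mirrors the approximation strategy already used for Theorem \ref{thm0}, so I would follow whichever of the two is shorter; the approximation route is safest since it avoids any delicate claim about joint time-space regularity of the weak solution.
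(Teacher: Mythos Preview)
Your proposal is correct and your ``safest'' route is exactly the one the paper takes: it states that for $\gamma\in\mathcal{U}_c$ the lemma follows from a direct application of It\^o's formula (since $\partial_t\partial_{xx}\Phi_\gamma$ is continuous on $[0,1)\times\mathbb{R}$ in that case), and that the general case is handled by the approximation in Proposition~\ref{pde:prop1}$(iii)$, referring to \cite[Proposition~3]{AC14} for the details. Your instinct to avoid the direct argument for arbitrary $\gamma\in\mathcal{U}$ is well founded: when $\gamma$ has jumps the PDE \eqref{pde} holds only weakly, so the step ``differentiate \eqref{pde} in $x$ to recover $\partial_s\partial_x^k\Phi_\gamma$'' is not justified pointwise in $s$, and It\^o's formula in the classical $C^{1,2}$ form does not apply without further work.
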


If $\gamma\in \mathcal{U}_c$, Lemma \ref{lem} is easily verified by a standard application of It\^{o}'s formula as $\partial_t\partial_{xx}\Phi_\gamma$ is continuous on $[0,1)\times\mathbb{R}$ in this case. For the general case, one can argue by an approximation procedure via  Proposition \ref{pde:prop1}$(iii)$, similar to that of \cite[Proposition 3]{AC14} with some minor modifications. Again we will omit these details. Next, we state a crucial property of $\partial_x\Phi_\gamma.$

\begin{lemma}\label{lem3}
For any $\gamma\in\mathcal{U}$ and $s\in[0,1)$, $\partial_x\Phi_\gamma(s,\cdot)$ is odd and strictly increasing.
\end{lemma}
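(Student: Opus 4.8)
The plan is to establish the two properties—oddness and strict monotonicity of $\partial_x\Phi_\gamma(s,\cdot)$—separately, exploiting the symmetry of the terminal condition $\Phi_\gamma(1,x)=|x|$ for the first and the stochastic control representation of Theorem \ref{thm0} for the second. For oddness, I would first show that $\Phi_\gamma(s,\cdot)$ is even for every $s\in[0,1]$. This follows from uniqueness of the weak solution (Proposition \ref{pde:prop1}$(i)$): the function $(s,x)\mapsto\Phi_\gamma(s,-x)$ solves the same PDE \eqref{pde} with the same terminal condition $|-x|=|x|$, since the equation involves only $\partial_{xx}\Phi_\gamma$ and $(\partial_x\Phi_\gamma)^2$, both invariant under $x\mapsto -x$; hence $\Phi_\gamma(s,-x)=\Phi_\gamma(s,x)$. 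Differentiating in $x$ — legitimate for $s<1$ by Proposition \ref{pde:prop1}$(ii)$ — gives $\partial_x\Phi_\gamma(s,-x)=-\partial_x\Phi_\gamma(s,x)$, i.e.\ $\partial_x\Phi_\gamma(s,\cdot)$ is odd.

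For strict monotonicity, the natural route is via the control representation \eqref{MaxFormula}. Fix $s\in[0,1)$ and pick any $s<t<1$; apply Theorem \ref{thm0} on $[s,t]$ so that, writing the diffusion $X_\gamma^x$ started from $X_\gamma^x(s)=x$, one has $\partial_x\Phi_\gamma(s,x)=\e\,\partial_x\Phi_\gamma(t,X_\gamma^x(t))$ by Lemma \ref{lem} (the martingale property \eqref{eq10}). Monotonicity in the initial condition of the SDE then reduces the claim at time $s$ to the claim at time $t$, pushing the problem toward the terminal time where $\partial_x\Phi_\gamma(t,\cdot)\to\operatorname{sgn}$. Concretely, I would argue by a comparison/coupling estimate: for $x<y$, couple $X_\gamma^x$ and $X_\gamma^y$ with the same Brownian motion $W$; the difference $\Delta(w):=X_\gamma^y(w)-X_\gamma^x(w)$ satisfies a linear ODE $d\Delta=\gamma(w)\xi''(w)\bigl(\partial_x\Phi_\gamma(w,X_\gamma^y)-\partial_x\Phi_\gamma(w,X_\gamma^x)\bigr)\,dw$ with $\Delta(s)=y-x>0$, so by Grönwall $\Delta(w)\ge(y-x)\exp(-\int_s^w\|\partial_{xx}\Phi_\gamma\|_\infty\gamma\xi''\,dr)>0$ for all $w\in[s,t]$, using the bound from Proposition \ref{pde:prop1}$(ii)$. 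Thus $X_\gamma^y(t)>X_\gamma^x(t)$ almost surely. Now $\partial_x\Phi_\gamma(s,y)-\partial_x\Phi_\gamma(s,x)=\e\bigl[\partial_x\Phi_\gamma(t,X_\gamma^y(t))-\partial_x\Phi_\gamma(t,X_\gamma^x(t))\bigr]$, which is $\ge 0$ once we know $\partial_x\Phi_\gamma(t,\cdot)$ is nondecreasing — and this weaker fact (convexity of $\Phi_\gamma(t,\cdot)$) holds because $|x|$ is convex and convexity propagates backward through \eqref{pde} (or can be read off the representation \eqref{MaxFormula}, each $F^{r,s}(u,\cdot)$ being convex as a composition of affine maps with the convex $\Phi_\gamma(t,\cdot)=|\cdot|$ at $t=1$, inductively). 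To upgrade $\ge$ to $>$, I would use that $\partial_{xx}\Phi_\gamma(t,\cdot)$ is not identically zero on any interval: if it were, $\partial_x\Phi_\gamma(t,\cdot)$ would be constant there, and tracing \eqref{pde} forward to time $1$ would force $|x|$ to be affine on a nondegenerate interval hit by the diffusion — a contradiction. More cleanly, one can take $t\uparrow 1$ and use that $\partial_x\Phi_\gamma(t,X_\gamma^y(t))-\partial_x\Phi_\gamma(t,X_\gamma^x(t))$ converges (after choosing $x,y$ of the same sign, say $0<x<y$) to $\operatorname{sgn}(x)-\operatorname{sgn}(x)=0$ pointwise but the strict separation $X_\gamma^y(t)>X_\gamma^x(t)$ combined with a positive-density argument for the diffusion shows a strictly positive probability that $X_\gamma^x(w)$ and $X_\gamma^y(w)$ straddle any fixed point; a direct way is: since $\partial_x\Phi_\gamma(s,\cdot)$ is odd, nondecreasing, and real-analytic-like smooth in $x$ (Proposition \ref{pde:prop1}$(ii)$), if it failed to be strictly increasing it would be constant on some interval $I$, hence (being odd) $I$ cannot contain $0$ unless the constant is $0$, and a strong-maximum-principle argument for the linear parabolic equation satisfied by $v:=\partial_x\Phi_\gamma$ forces $v$ to be constant everywhere in $x$ for that $s$, contradicting the terminal behavior.

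The main obstacle I anticipate is making the strict inequality airtight in the degenerate regime where $\gamma$ may blow up at $1$ (the case $\gamma(1-)=\infty$ flagged in the text) and where the terminal condition $|x|$ has a corner at $0$; near $s$ close to $1$ one loses uniform control of $\partial_{xx}\Phi_\gamma$. The way around this is precisely Proposition \ref{pde:prop1}$(ii)$: one never works at time $1$, only at some $t=s_1<1$ where $\partial_{xx}\Phi_\gamma$ is bounded and the diffusion $X_\gamma$ has a smooth strictly positive transition density (nondegenerate since $\xi''>0$ on $(0,1)$ as $\xi$ is a nonzero even polynomial-type series with nonnegative coefficients, giving $\xi''>0$ away from $0$; at $s=0$ one treats it as a limiting case or restricts to $s>0$ and extends by continuity from Proposition \ref{pde:prop1}$(ii)$). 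A secondary technical point is justifying differentiation under the expectation and the Grönwall step; these are routine given the smoothness and boundedness from Proposition \ref{pde:prop1}$(ii)$ and the Lipschitz-in-$x$ control of $\partial_x\Phi_\gamma$, so I would state them briefly and refer to the analogous arguments in \cite{C14, AC14}.
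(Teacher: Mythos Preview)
Your oddness argument agrees with the paper's. For strict monotonicity, however, the paper takes a shorter and more direct route than yours: it proves \emph{strict convexity} of $\Phi_\gamma(s,\cdot)$ in one stroke using the representation \eqref{MaxFormula} on the full interval $[s,1]$. With $u_\gamma^\theta$ the optimizer for $\Phi_\gamma(s,x_\theta)$ and $\Delta:=\int_s^1\xi''\gamma u_\gamma^\theta\,dw+\int_s^1\sqrt{\xi''}\,dW$, Girsanov's theorem shows that $\Delta$ is Gaussian under an equivalent measure and hence has full support on $\mathbb{R}$, so $\p\bigl((x_0+\Delta)(x_1+\Delta)<0\bigr)>0$; on that event $|x_\theta+\Delta|<(1-\theta)|x_0+\Delta|+\theta|x_1+\Delta|$, and feeding $u_\gamma^\theta$ as a (suboptimal) control into the representations for $\Phi_\gamma(s,x_0)$ and $\Phi_\gamma(s,x_1)$ yields $\Phi_\gamma(s,x_\theta)<(1-\theta)\Phi_\gamma(s,x_0)+\theta\Phi_\gamma(s,x_1)$. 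Strict convexity immediately rules out $\partial_x\Phi_\gamma(s,\cdot)$ being constant on any segment.

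By contrast, you stop the representation at an intermediate time $t<1$ (to dodge the corner of $|x|$ and the possible blow-up of $\gamma$), couple the diffusions, and are then left to show that $\partial_x\Phi_\gamma(t,\cdot)$ is somewhere strictly increasing on the random interval $[X_\gamma^x(t),X_\gamma^y(t)]$. Your three candidate closures---sending $t\uparrow 1$, a strong maximum principle, or ruling out $\partial_{xx}\Phi_\gamma(t,\cdot)\equiv 0$ on intervals---are only sketched, and each needs real work (the equation satisfied by $\partial_x\Phi_\gamma$ is semilinear, not linear; the limit $t\uparrow 1$ is precisely where your regularity bounds degenerate). The scheme can probably be completed (for instance: your coupling plus full support of $X_\gamma^x(t)$ via Girsanov would force $\partial_x\Phi_\gamma(t,\cdot)$ to be globally constant, hence zero by oddness, hence $\Phi_\gamma(t,\cdot)$ constant for all $t<1$, contradicting $\Phi_\gamma(1,x)=|x|$ by continuity), but you have not written this down. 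The paper's point is that there is no need to retreat from the terminal time: the representation \eqref{MaxFormula} is valid with endpoint $1$, the terminal datum $|x|$ is exactly where the strict-convexity defect is explicit, and Girsanov supplies the positive-probability event on which the triangle inequality for $|\cdot|$ is strict.
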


\begin{proof}
	Since the boundary condition $|x|$ is even, it is easy to see that $\Phi_\gamma(s,\cdot)$ is also even and thus $\partial_x\Phi_\gamma(s,\cdot)$ is odd. From Lemma \ref{lem4}, we know that $\Phi_\gamma(s,\cdot)$ is convex. This implies that $\partial_x\Phi_\gamma(s,\cdot)$ is nondecreasing. To see that $\partial_x\Phi_{\gamma}(s,\cdot)$ is strictly increasing, for any two distinct $x_0,x_1\in\mathbb{R}$ and $\theta\in(0,1)$, let $u_{\gamma}^{\theta}$ be the maximizer and $X_{\gamma}^{\theta}$ be the SDE solution in the representation \eqref{MaxFormula} for $\Phi_\gamma(s,x_\theta)$. Note that from Girsanov's theorem \cite[Theorem 5.1]{KS}, the distribution of
	$$
	\Delta:=\int_s^1\xi''\gamma u_{\gamma}^\theta dw+\int_s^1{\xi''}^{1/2}dW
	$$
	is Gaussian under some change of measure. Therefore, $\Delta$ is supported on $\mathbb{R}$, so
	\begin{align*}
	\p\bigl((x_0+\Delta)(x_1+\Delta)<0\bigr)>0.
	\end{align*}
	Thus with positive probability,
	\begin{align*}
	|X_{\gamma}^\theta|&=|(1-\theta)\bigl(x_0+\Delta\bigr)+\theta\bigl(x_1+\Delta\bigr)|\\
	&<(1-\theta)|x_0+\Delta|+\theta|x_1+\Delta|.
	\end{align*}
	Using this inequality, \eqref{MaxFormula} implies
	$$
	\Phi_{\gamma}(s,x_\theta)<(1-\theta)\Phi_\gamma(s,x_0)+\theta\Phi_\gamma(s,x_1).
	$$
	This gives the strict convexity of $\Phi_\gamma(s,\cdot)$. If $\partial_x\Phi_\gamma(s,x_0)=\partial_x\Phi_{\gamma}(s,x_1)$ for two distinct $x_0$ and $x_1,$ then $\Phi_\gamma(s,x)=cx+d$ for any $x$ between $x_0$ and $x_1,$ where $c,d$ are constants. However, this contradicts the strict convexity of $\Phi_\gamma(s,\cdot).$
\end{proof}

Lemma \ref{lem9} establishes the strict convexity of the PDE solution $\Phi_{\gamma}$ in $\gamma.$

\begin{lemma}
	\label{lem9}
	Let $\gamma_0,\gamma_1\in\mathcal{U}$, $x_0,x_1\in\mathbb{R}$, and $\theta\in(0,1).$ Recall the notations $x_\theta $ and $\gamma_\theta$ from \eqref{eq-2}. If $\gamma_0\neq \gamma  _1$, we have
	\begin{align*}
	\Phi_{\gamma_\theta}(0,x_\theta )&< (1-\theta)\Phi_{\gamma_0}(0,x_0)+\theta\Phi_{\gamma_1}(0,x_1).
	\end{align*}
\end{lemma}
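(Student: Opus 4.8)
The plan is to upgrade the (weak) convexity from Lemma \ref{lem4} to strict convexity by exploiting the stochastic control representation of Theorem \ref{thm0} together with the uniqueness of the optimizer (Lemma \ref{prop1}) and the strict monotonicity of $\partial_x\Phi_\gamma$ (Lemma \ref{lem3}). First I would reduce to the non-degenerate case. If $\gamma_0(0)=0$ and $\gamma_1(0)=0$ then both measures put no mass at $0$; but since $\gamma_0\neq\gamma_1$ as elements of $\mathcal{U}$, there is a first time $r_0\in[0,1)$ at which they genuinely differ, and by the semigroup property $\Phi_{\gamma_a}(0,x_a)=\Phi_{\gamma_a}(r_0,\cdot)$ composed with the Gaussian evolution on $[0,r_0]$ (which is the same for $a=0,\theta,1$ up to $r_0$ since $\gamma_\theta=\gamma_0=\gamma_1$ there), so it suffices to prove strict convexity started at time $r_0$; and at $r_0$ we may arrange $\gamma_\theta(r_0)>0$. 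So without loss of generality assume $\gamma_\theta(r)>0$ with $r$ the relevant starting time (I will write $r$ below, with $r=0$ in the generic situation).

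The core argument: fix $s=1$ and let $u^\star:=u_{\gamma_\theta}$ be the optimal control for $\Phi_{\gamma_\theta}(r,x_\theta)$ from Theorem \ref{thm0}, with associated diffusion $X_{\gamma_\theta}$ solving the SDE started at $x_\theta$. Plugging $u^\star$ into the variational formulas for $\gamma_0$ and $\gamma_1$ and using that $L_\theta^{r,1}(u^\star)=(1-\theta)L_0^{r,1}(u^\star)+\theta L_1^{r,1}(u^\star)$, I get
\begin{align*}
\Phi_{\gamma_\theta}(r,x_\theta)&=\e\bigl[C_\theta^{r,1}(u^\star,x_\theta)\bigr]-L_\theta^{r,1}(u^\star)\\
&\leq (1-\theta)\e\bigl[C_0^{r,1}(u^\star,x_0)\bigr]+\theta\e\bigl[C_1^{r,1}(u^\star,x_1)\bigr]-(1-\theta)L_0^{r,1}(u^\star)-\theta L_1^{r,1}(u^\star)\\
&\leq (1-\theta)\Phi_{\gamma_0}(r,x_0)+\theta\Phi_{\gamma_1}(r,x_1),
\end{align*}
where the first inequality uses convexity of $|x|$ at the terminal time after noting that the drift-plus-noise term $\int_r^1\xi''\gamma_\theta u^\star\,dw+\int_r^1(\xi'')^{1/2}dW$ is itself the $\theta$-convex combination of the corresponding terms built from $\gamma_0,\gamma_1$ with the \emph{same} $u^\star$, and the second inequality is just suboptimality of $u^\star$ for $\gamma_0,\gamma_1$. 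To get strict inequality I argue by contradiction: if equality holds throughout, then in particular $u^\star$ is \emph{also} optimal for both $\Phi_{\gamma_0}(r,x_0)$ and $\Phi_{\gamma_1}(r,x_1)$; by Lemma \ref{prop1} (applicable once we know $\gamma_0(r),\gamma_1(r)>0$, which a further reduction as above secures on the relevant subinterval) this forces $u^\star(w)=\partial_x\Phi_{\gamma_0}(w,X_{\gamma_0}(w))=\partial_x\Phi_{\gamma_1}(w,X_{\gamma_1}(w))$ almost everywhere, where $X_{\gamma_0},X_{\gamma_1}$ are the optimal diffusions for $\gamma_0,\gamma_1$; but $u^\star=\partial_x\Phi_{\gamma_\theta}(\cdot,X_{\gamma_\theta})$ and the three diffusions $X_{\gamma_0},X_{\gamma_\theta},X_{\gamma_1}$ are driven by the \emph{same} Brownian motion with different drifts, so matching their $\partial_x\Phi$-values at all times together with Lemma \ref{lem3} (strict monotonicity, hence injectivity of $\partial_x\Phi_\gamma(w,\cdot)$) should pin down a rigid relation among the SDEs. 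Comparing the SDEs then yields $\gamma_0(w)\xi''(w)\partial_x\Phi_{\gamma_0}=\gamma_1(w)\xi''(w)\partial_x\Phi_{\gamma_1}$ at the common value, forcing $\gamma_0=\gamma_1$ Lebesgue-a.e. on the relevant interval, contradicting $\gamma_0\neq\gamma_1$.

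The main obstacle, and the step I expect to require the most care, is making the contradiction argument rigorous despite the terminal-time singularity: the control representation and Lemma \ref{prop1} live on $[r,s]$ with $s<1$, but the equality case must be propagated up to $s=1$ where $\partial_x\Phi_\gamma$ may fail to be continuous. The fix is to run the entire argument on $[r,s]$ for $s<1$ arbitrary, using the semigroup identity $\Phi_\gamma(r,x)=\Phi_\gamma(r,x)$ expressed via $F^{r,s}$ with terminal data $\Phi_\gamma(s,\cdot)$ (not $|x|$), which is $C^\infty$ in $x$ by Proposition \ref{pde:prop1}$(ii)$; then the convexity inequality for $\Phi_\gamma(s,\cdot)$ in $(\gamma,x)$ jointly — itself already known to be strict in $x$ by the proof of Lemma \ref{lem3} — replaces the crude use of convexity of $|x|$, and one gets strict inequality from any $s<1$ at which $\gamma_0,\gamma_1$ still differ on $[0,s]$ with positive mass near the start. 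The second delicate point is the reduction ensuring $\gamma_a(r)>0$: one must locate an interval $[r,s]\subset[0,1)$ on which $\gamma_\theta$ (equivalently at least one of $\gamma_0,\gamma_1$) is strictly positive \emph{and} on which $\gamma_0\neq\gamma_1$ — since $\gamma$'s are nondecreasing (being $\mu([0,s])$), once positive they stay positive, so it is enough to take $r$ slightly past the first point of disagreement if that disagreement occurs where the relevant $\gamma$ is already positive, and otherwise handle separately the case where $\gamma_0,\gamma_1$ differ only on an initial interval where both vanish (there they are both $\equiv 0$, hence equal — so this case is vacuous). I would then assemble these pieces and conclude.
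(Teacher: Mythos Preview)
Your overall strategy---upgrade Lemma~\ref{lem4} to strict inequality via the control representation, assume equality, deduce that the optimizer $u_{\gamma_\theta}$ is also optimal for $\gamma_0,\gamma_1$, and derive a contradiction---matches the paper's. The paper also works on a compact subinterval $[r,s]\subset(0,1)$ to avoid the terminal singularity, as you anticipate. Two points, however, separate your sketch from a proof.

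First, your reduction is more complicated than necessary and partly circular. The paper simply uses right-continuity to find $0<r<s<1$ with (say) $\gamma_0>\gamma_1$ on $[r,s]$; then $\gamma_0(r)>0$ automatically, and crucially the paper only invokes the uniqueness Lemma~\ref{prop1} for $\gamma_0$ to get $u_{\gamma_\theta}=u_{\gamma_0}$, never needing $\gamma_1(r)>0$. Your ``further reduction'' to secure $\gamma_0(r),\gamma_1(r)>0$ simultaneously is neither needed nor clearly achievable.

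Second---and this is the real gap---your contradiction mechanism does not work as written. Knowing $u^\star(w)=\partial_x\Phi_{\gamma_0}(w,X_{\gamma_0}(w))=\partial_x\Phi_{\gamma_1}(w,X_{\gamma_1}(w))$ tells you only that
\[
X_{\gamma_0}(w)-X_{\gamma_1}(w)=(x_0-x_1)+\int_r^w(\gamma_0-\gamma_1)\xi'' u^\star\,dw',
\]
a bounded-variation difference; it does \emph{not} force the drifts to coincide, so ``comparing the SDEs then yields $\gamma_0\xi''\partial_x\Phi_{\gamma_0}=\gamma_1\xi''\partial_x\Phi_{\gamma_1}$'' is unjustified. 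Lemma~\ref{lem3} gives injectivity of $\partial_x\Phi_\gamma(w,\cdot)$ for each fixed $\gamma$, but the three functions $\partial_x\Phi_{\gamma_a}(w,\cdot)$ are different, so equal values at different arguments is no constraint. The paper supplies the missing idea via Lemma~\ref{lem}: from $u_{\gamma_0}=u_{\gamma_\theta}$ and \eqref{eq10} one matches quadratic variations to obtain $\partial_{xx}\Phi_{\gamma_0}(w,X_{\gamma_0}(w))=\partial_{xx}\Phi_{\gamma_\theta}(w,X_{\gamma_\theta}(w))$; then taking expectations in \eqref{eq11} yields
\[
\int_r^s\gamma_0\,\xi''\,\e\bigl(\partial_{xx}\Phi_{\gamma_0}(w,X_{\gamma_0}(w))\bigr)^2dw
=\int_r^s\gamma_\theta\,\xi''\,\e\bigl(\partial_{xx}\Phi_{\gamma_\theta}(w,X_{\gamma_\theta}(w))\bigr)^2dw,
\]
which contradicts $\gamma_0>\gamma_\theta$ once one checks (via Girsanov plus Lemma~\ref{lem3}) that these second-moment integrands are strictly positive. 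In short, the argument hinges on the martingale/semimartingale identities of Lemma~\ref{lem}, which your proposal never invokes.
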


\begin{proof} We adapt a slightly simplified version of the argument of \cite[Theorem 4]{AC14}.
	Suppose that $x_0,x_1\in\mathbb{R}$, $\gamma  _0\neq \gamma  _1$ and $\theta\in(0,1)$. Using the right-continuity of $\gamma_0,\gamma_1,$ without loss of generality, we may assume that there exist some $0<r<s<1$ such that
	\begin{align}
	\label{proof:thm1:eq2}
	\mbox{$\gamma_0>\gamma_1$ on $[r,s]$}.
	\end{align}
	First we claim that
	\begin{align}
	\label{lem9:proof:eq1}
	\Phi _{\gamma_\theta}(r,x_\theta )&<(1-\theta)\Phi_{\gamma_0}(r,x_0)+\theta\Phi_{\gamma_1}(r,x_1).
	\end{align}
	Suppose, on the contrary, that equality holds. Recall the notations used in the proof of Lemma \ref{lem4}. Let $u_{\gamma_0},u_{\gamma_\theta},u_{\gamma_1}$ be the corresponding maximizers of \eqref{eq16} generated by \eqref{max}. From Lemma \ref{lem4}, 
	\begin{align}
	\begin{split}
	\label{eq12}
	F_\theta^{r,s}(u_{\gamma_\theta},x_\theta )&\leq (1-\theta) F_0^{r,s}(u_{\gamma_\theta},x_0)+ \theta F_1^{r,s}(u_{\gamma_\theta},x_1).
	\end{split}
	\end{align}
	Note that
	\begin{align*}
	F_0^{r,s}(u_{\gamma_\theta},x_0)\leq \Phi_{\gamma_0}(r,x_0),\,\,F_1^{r,s}(u_{\gamma_\theta},x_1)\leq \Phi_{\gamma_1}(r,x_1),\,\,
	F_\theta^{r,s}(u_{\gamma_\theta},x_\theta )=\Phi_{\gamma_\theta}(r,x_\theta ).
	\end{align*}
	Consequently, from \eqref{eq12} and the assumption that
	$$
	\Phi_{\gamma_\theta}(r,x_\theta )=(1-\theta)\Phi_{\gamma_0}(r,x_0)+\theta\Phi_{\gamma_1}(r,x_1),
	$$
	we obtain
	\begin{align*}
	F_0^{r,s}(u_{\gamma_\theta},x_0)=\Phi_{\gamma_0}(r,x_0),\\
	F_1^{r,s}(u_{\gamma_\theta},x_1)=\Phi_{\gamma_1}(r,x_1).
	\end{align*}
	In other words, $u_{\gamma_\theta}$ realizes the maximum of the representations for $\Phi_{\gamma_0}(r,x_0)$ and $\Phi_{\gamma_1}(r,x_1)$. Now from \eqref{proof:thm1:eq2} and Lemma \ref{prop1}, we conclude $u_{\gamma_0}=u_{\gamma_\theta}$ with respect to the metric $d_0$ defined in \eqref{metric}. Since $u_{\gamma_0}$ and $u_{\gamma_\theta}$ are continuous on $[r,s],$ we have
	\begin{align}
	\label{eq13}
	\partial_x\Phi_{\gamma_0}(w,X_{\gamma_0}(w))=u_{\gamma_0}(w)=u_{\gamma_\theta}(w)=\partial_x\Phi_{\gamma_\theta}(w,X_{\gamma_\theta}(w))
	\end{align}
	for all $r\leq w\leq s,$ where $(X_{\gamma_0}(w))_{r\leq w\leq s}$ and $(X_{\gamma_\theta}(w))_{r\leq w\leq s}$ satisfy respectively,
	\begin{align*}
	dX_{\gamma_0}(w)&=\gamma  _0(w)\xi''  (w)\partial_x\Phi_{\gamma_0}(w,X_{\gamma_0}(w))dr+\xi''  (w)^{1/2}dB(w),\\
	X_{\gamma_0}(w)&=x_0,\\
	dX_{\gamma_\theta}(w)&=\gamma  _\theta(w)\xi''  (w)\partial_x\Phi_{\gamma_\theta}(w,X_{\gamma_\theta}(w))dr+\xi''  (w)^{1/2}dB(w),\\
	X_{\gamma_\theta}(w)&=x_\theta .
	\end{align*}
	From \eqref{eq10}, \eqref{eq13} and It\^{o}'s isometry, we obtain
	\begin{align}\label{eq14}
	\partial_{xx}\Phi_{\gamma_0}(w,X_{\gamma_0}(w))=\partial_{xx}\Phi_{\gamma_\theta}(w,X_\theta(w)),\,\,\forall w\in[r,s].
	\end{align}
	Combined with \eqref{eq11}, this gives
	\begin{align}
	\begin{split}\label{eq9}
	\int_r^s\gamma  _0(w)\xi''  (w)\e(\partial_{xx}\Phi_{\gamma_0}(w,X_0(w)))^2dw
	&=\int_r^s\gamma  _\theta(w)\xi''  (w)\e(\partial_{xx}\Phi_{\gamma_\theta}(w,X_\theta(w)))^2dw.
	\end{split}
	\end{align}
	For any $r< w<s,$ neither $\e(\partial_{xx}\Phi_{\gamma_0}(w,X_{\gamma_0}(w)))^2$ nor $\e(\partial_{xx}\Phi_{\gamma_\theta}(w,X_{\gamma_\theta}(w)))^2$ is equal to zero. In fact, from Girsanov's theorem \cite[Theorem 5.1]{KS}, the distribution of $X_{\gamma_0}(w)$ is Gaussian with some change of measure and therefore is supported on $\mathbb{R}$. If, for instance,  $$\e(\partial_{xx}\Phi_{\gamma_0}(w,X_{\gamma_0}(w)))^2=0,$$ then $\partial_{xx}\Phi_{\gamma_0}(w,\cdot)=0,$ which contradicts Lemma \ref{lem3}. Now, from \eqref{proof:thm1:eq2} and \eqref{eq14},
	$$
	\int_r^{s}\gamma  _0(w)\xi''  (w)\e(\partial_{xx}\Phi_{\gamma_0}(w,X_{\gamma_0}(w)))^2dr>\int_r^s\gamma  _\theta(w)\xi''  (w)\e(\partial_{xx}\Phi_{\gamma_\theta}(w,X_{\gamma_\theta}(w)))^2dr.
	$$
	This contradicts \eqref{eq9}, so the inequality \eqref{lem9:proof:eq1} holds. This completes the claim \eqref{lem9:proof:eq1}.
	\smallskip
	
	Next, from the above claim, for any $y_0,y_1\in\mathbb{R}$,
	\begin{align*}
	\Phi_{\gamma_\theta}(w,y_\theta)<(1-\theta) \Phi_{\gamma_0}(w,y_0)+\theta\Phi_{\gamma_1}(w,y_1).
	\end{align*}
	Therefore,
	\begin{align*}
	C_\theta^{0,s}(u_{\gamma_\theta},x_\theta )&
	<(1-\theta)C_0^{0,s}(u_{\gamma_\theta},x_0)+\theta C_1^{0,s}(u_{\gamma_\theta},x_1).
	\end{align*}
	This together with
	$$
	L_\theta^{0,s}(u_{\gamma_\theta})=(1-\theta) L_0^{0,s}(u_{\gamma_\theta})+\theta L_1^{0,s}(u_{\gamma_\theta})
	$$
	gives
	\begin{align*}
	F_\theta^{0,s}(u_{\gamma_\theta},x_\theta )<(1-\theta)F_0^{0,s}(u_{\gamma_\theta},x_0)+\theta F_1^{0,s}(u_{\gamma_\theta},x_1).
	\end{align*}
	Since $u_{\gamma_\theta}$ is the maximizer for $\Phi_{\gamma_\theta}(0,x_\theta ),$ clearly
	\begin{align*}
	\Phi_{\gamma_\theta}(0,x_\theta )&=F_\theta^{0,t}(u_{\gamma_\theta},x_\theta )<(1-\theta) \Phi_{\gamma_0}(0,x_0)+\theta\Phi_{\gamma_1}(0,x_1).
	\end{align*}
	This completes our proof.
\end{proof}

\begin{proof}[\bf Proof of Theorem \ref{thm4}] Note that $\int_0^1s\xi''(s)\gamma(s)ds$ is linear in $\gamma.$ From the strict convexity of $\Phi_\gamma$ in Lemma \ref{lem9}, Theorem \ref{unique} follows.
\end{proof}

\subsection{Optimality}

In this subsection, we establish some consequences of the optimality of the Parisi measure. These will be of great use when we turn to the proof of disorder chaos. Our first main results are Theorem \ref{thm3.1} and Proposition \ref{prop3.2} below.

\begin{theorem}\label{thm3.1}
	$\gamma_{h} $ is not identically equal to zero.
\end{theorem}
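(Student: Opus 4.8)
The plan is to argue by contradiction: suppose the Parisi measure $\gamma_h$ is identically zero, i.e. $\gamma_h \equiv 0$ on $[0,1)$. Then the Parisi PDE \eqref{pde} degenerates to $\partial_s \Phi_{\gamma_h}(s,x) = -\tfrac{\xi''(s)}{2}\partial_{xx}\Phi_{\gamma_h}(s,x)$ with boundary data $|x|$ at $s=1$, so $\Phi_0(0,h) = \mathbb{E}\,|h + \sqrt{\xi'(1)-\xi'(0)}\,Z|$ for a standard Gaussian $Z$, and $\mathcal{P}_h(0) = \mathbb{E}\,|h + \sqrt{\xi'(1)}\,Z|$ since the penalty term $\tfrac12\int_0^1 \xi''(s)s\gamma(s)\,ds$ vanishes. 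The strategy is then to exhibit a competitor $\gamma \in \mathcal{U}$ — a small atom of mass $a > 0$ placed at a point $q$ near $1$, say $\gamma = a\,\mathbf{1}_{[q,1)}$ — and show $\mathcal{P}_h(\gamma) < \mathcal{P}_h(0)$ for $a$ small enough, contradicting the optimality of $\gamma_h = 0$.

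To carry this out I would compute the derivative $\frac{d}{da}\mathcal{P}_h(a\,\mathbf{1}_{[q,1)})\big|_{a=0^+}$. Using the stochastic optimal control representation of Theorem~\ref{thm0} (or equivalently differentiating the PDE in the coupling-constant direction, as in the standard first-variation computation for the Parisi functional), one obtains an expression of the form
\begin{align*}
\frac{d}{da}\mathcal{P}_h\bigl(a\,\mathbf{1}_{[q,1)}\bigr)\Big|_{a=0^+} = \frac12\int_q^1 \xi''(s)\Bigl(\mathbb{E}\bigl(\partial_x\Phi_0(s,X_0(s))\bigr)^2 - s\Bigr)\,ds,
\end{align*}
where $X_0$ is the (Gaussian) process driven by the $\gamma\equiv 0$ dynamics started at $h$. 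Here $\partial_x\Phi_0(s,\cdot)$ is odd and strictly increasing (Lemma~\ref{lem3}) with $|\partial_x\Phi_0| \le 1$, and as $s \uparrow 1$ one has $\partial_x\Phi_0(s,x) \to \mathrm{sign}(x)$, so $\mathbb{E}(\partial_x\Phi_0(s,X_0(s)))^2 \to \mathbb{P}(X_0(1)\neq 0) = 1$ while $s \to 1$ as well; the point is to show the bracket is \emph{negative} for $s$ close to $1$, which would make the derivative negative and finish the argument. In the case $h=0$ the process $X_0(s)$ is centered Gaussian with variance $\xi'(s)-\xi'(0)$, and $\mathbb{E}(\partial_x\Phi_0(s,X_0(s)))^2 < \mathbb{E}\,\mathbf{1}_{\{X_0(s)\neq 0\}} = 1$; one needs the sharper quantitative statement that $\mathbb{E}(\partial_x\Phi_0(s,X_0(s)))^2 < s$ near $s=1$, which should follow because $\partial_x \Phi_0(s, \cdot)$ is a genuine contraction of the sign function away from $s=1$ and the variance $\xi'(s) - \xi'(0)$ stays bounded away from making it degenerate. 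For $h \neq 0$ a similar but slightly more delicate comparison is needed since $X_0$ has a drift toward $\mathrm{sign}(h)$.

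The main obstacle I anticipate is making the first-variation formula rigorous and controlling the bracket $\mathbb{E}(\partial_x\Phi_0(s,X_0(s)))^2 - s$ with a genuine sign near $s=1$: differentiability of $a \mapsto \mathcal{P}_h(a\mathbf{1}_{[q,1)})$ at $a=0$ requires the regularity of $\partial_x \Phi_\gamma$ from Proposition~\ref{pde:prop1}$(ii)$ uniformly for small $a$, and the interchange of derivative and expectation needs the uniform bound $|\partial_x\Phi| \le 1$ plus the Lipschitz estimate \eqref{lip}. Once the formula is in hand, the sign of the integrand is really an elementary fact about how much the heat semigroup (with the time-change $\xi''$) smooths $|x|$ — a clean way to phrase it is that $\partial_x \Phi_0(s,\cdot)$ is the convolution of $\mathrm{sign}(\cdot)$ with a Gaussian kernel of positive width, hence strictly sub-unit in $L^2$ against any nondegenerate Gaussian, and then a short computation comparing against $s$ near $1$ closes it. An alternative route, if the variational first-variation is awkward, is to compare directly the PDE solutions $\Phi_{a\mathbf{1}_{[q,1)}}$ and $\Phi_0$ using the representation and the convexity estimates already proved (Lemmas~\ref{lem4} and \ref{lem9}), extracting a strict gain of order $a$ from the strict increase of $\partial_x\Phi_0$; I would fall back on that if needed. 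I expect the paper follows essentially the positive-temperature argument of \cite{C14} for the non-triviality of the Parisi measure, adapted to the zero-temperature PDE with boundary condition $|x|$.
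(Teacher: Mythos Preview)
Your overall strategy coincides with the paper's: argue by contradiction, invoke the directional-derivative criterion (this is exactly Proposition~\ref{prop3.1}, so the first-variation formula you worry about justifying is already available), and show that $\mathbb{E}\bigl(\partial_x\Phi_0(s,X_0(s))\bigr)^2 - s < 0$ for $s$ near $1$, which forces the derivative of $\mathcal{P}_h$ in the direction $\mathbf{1}_{[q,1)}$ to be negative at $\gamma_h=0$.

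The gap is in the step you yourself flag: your heuristic that $\partial_x\Phi_0(s,\cdot)$ is a strict contraction of $\mathrm{sign}(\cdot)$ only yields $\mathbb{E}(\partial_x\Phi_0(s,X_0(s)))^2 < 1$, hence bracket $< 1-s$, which is the wrong sign. What is actually needed is a \emph{rate} comparison as $s\uparrow 1$, and the paper obtains it by an explicit Gaussian computation. Writing $\partial_x\Phi_0(s,x)=\mathbb{E}\,\mathrm{sign}\bigl(x+\sqrt{\xi'(1)-\xi'(s)}\,Z\bigr)$ and differentiating, one finds
\[
\frac{d}{ds}\,\mathbb{E}\bigl(\partial_x\Phi_0(s,X_0(s))\bigr)^2 \;=\; \frac{2\,\xi''(s)}{\pi\sqrt{\xi'(1)^2-\xi'(s)^2}}\,\exp\Bigl(-\frac{h^2}{\xi'(1)+\xi'(s)}\Bigr),
\]
which diverges like $(1-s)^{-1/2}$ as $s\uparrow 1$. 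Since both $\mathbb{E}(\partial_x\Phi_0(s,X_0(s)))^2$ and $s$ equal $1$ at $s=1$, writing the bracket as $\int_s^1\bigl(1-\tfrac{d}{dw}\mathbb{E}(\partial_x\Phi_0(w,X_0(w)))^2\bigr)\,dw$ makes it negative once $s$ is close enough to $1$ that the derivative exceeds $1$ on $[s,1)$. This single calculation handles all $h\ge 0$ at once; your proposed split into $h=0$ and $h\ne 0$ is unnecessary.
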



 Let $\mu_{h} $ be the measure induced by $\gamma_{h} .$ From Theorem~\ref{thm3.1}, it is clear that $\Omega_h:=\mbox{supp}\mu_{h} $ is not empty. This fact will play an essential role throughout the rest of the paper. Next, we formulate some consistency conditions for $\gamma_{h} .$

\begin{proposition}\label{prop3.2}
	For any $s\in \Omega_h$, we have that
	\begin{align}
	\begin{split}
	\label{prop3.2:eq1}
	\e\bigl(\partial_x\Phi_{\gamma_{h} }(s,X_{\gamma_{h} }(s))\bigr)^2&=s,
	\end{split}\\
	\begin{split}
	\label{prop3.2:eq2}
	\xi''(s)\e\bigl(\partial_{xx}\Phi_{\gamma_{h} }(s,X_{\gamma_{h} }(s))\bigr)^2&\leq 1,
	\end{split}
	\end{align}
	where $X_{\gamma_{h} }=(X_{\gamma_{h} }(w))_{0\leq w<1}$ satisfies the following stochastic differential equation with initial condition $X_{\gamma_h}(0)=h.$
	\begin{align*}
	dX_{\gamma_{h} }(w)&=\xi''(w)\gamma_{h} (w)\partial_x\Phi_{\gamma_{h} }(w,X_{\gamma_{h} }(w))dw+\xi''(w)^{1/2}dW(w).
	\end{align*}
\end{proposition}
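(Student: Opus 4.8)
The plan is to derive both conditions from first-order optimality of the functional $\mathcal{P}_h$ at its minimizer $\gamma_h$, exactly as one does for the Parisi measure of the free energy at positive temperature (cf. the framework of \cite{C14}). First I would recall that $\mathcal{P}_h(\gamma)=\Phi_\gamma(0,h)-\tfrac12\int_0^1 s\xi''(s)\gamma(s)\,ds$ and compute the directional (Gateaux) derivative of $\gamma\mapsto\Phi_\gamma(0,h)$ along an admissible perturbation. Using the stochastic control representation of Theorem~\ref{thm0} together with the uniqueness of the optimizer (Lemma~\ref{prop1}) and an envelope-type argument, one finds that for a perturbation direction $\gamma\leadsto(1-\lambda)\gamma_h+\lambda\nu$ the derivative at $\lambda=0^+$ equals
\[
\frac12\int_0^1 \xi''(w)\,\e\bigl(\partial_x\Phi_{\gamma_h}(w,X_{\gamma_h}(w))\bigr)^2\bigl(\nu(w)-\gamma_h(w)\bigr)\,dw,
\]
where $X_{\gamma_h}$ solves the stated SDE with $X_{\gamma_h}(0)=h$. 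This is the standard computation: only the explicit dependence of the running cost and the endpoint on $\gamma$ contributes, and the term coming from perturbing the optimal control $u_{\gamma_h}$ vanishes by optimality. Combining with the linear term $-\tfrac12\int_0^1 s\xi''(s)(\nu(s)-\gamma_h(s))\,ds$, minimality of $\gamma_h$ gives, for every $\nu\in\mathcal{U}$,
\[
\int_0^1 \xi''(w)\Bigl(\e\bigl(\partial_x\Phi_{\gamma_h}(w,X_{\gamma_h}(w))\bigr)^2-w\Bigr)\bigl(\nu(w)-\gamma_h(w)\bigr)\,dw\ \geq\ 0.
\]

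Next I would convert this variational inequality into the pointwise statement \eqref{prop3.2:eq1}. Writing $\psi(w):=\xi''(w)\bigl(\e(\partial_x\Phi_{\gamma_h}(w,X_{\gamma_h}(w)))^2-w\bigr)$, the inequality $\int_0^1\psi\,d(\nu-\mu_h)\geq0$ for all probability-type measures $\nu$ forces $\psi\geq 0$ Lebesgue-a.e.\ (test against $\nu=\gamma_h$ shifted by a small bump where $\psi<0$) and $\int_0^1\psi\,d\mu_h=0$ (test against $\nu$ obtained by perturbing $\mu_h$ toward and away from itself, using that $\gamma_h$ is an interior-in-the-right-sense minimizer); hence $\psi=0$ $\mu_h$-a.e., and since $\xi''>0$ on $(0,1)$ this yields $\e(\partial_x\Phi_{\gamma_h}(s,X_{\gamma_h}(s)))^2=s$ for $\mu_h$-a.e.\ $s$. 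Continuity of $s\mapsto \e(\partial_x\Phi_{\gamma_h}(s,X_{\gamma_h}(s)))^2$ on $[0,1)$ — which follows from Proposition~\ref{pde:prop1}$(ii)$ (uniform bounds and continuity of $\partial_x\Phi_{\gamma_h}$ away from $1$) plus continuity in $w$ of the diffusion $X_{\gamma_h}(w)$ — upgrades "$\mu_h$-a.e." to "for all $s\in\Omega_h=\mathrm{supp}\,\mu_h$", giving \eqref{prop3.2:eq1}.

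For \eqref{prop3.2:eq2} I would differentiate the identity \eqref{prop3.2:eq1} (or rather the function $\psi$, known nonnegative everywhere and zero on $\Omega_h$) in the variable $s$: at any point $s\in\Omega_h$ that is not isolated from the right, $\psi$ attains a local minimum value $0$, so $\psi'(s)\leq 0$ in an appropriate one-sided sense, i.e.
\[
\frac{d}{ds}\Bigl(\xi''(s)\,\e\bigl(\partial_x\Phi_{\gamma_h}(s,X_{\gamma_h}(s))\bigr)^2\Bigr)\ \leq\ \frac{d}{ds}\bigl(s\,\xi''(s)\bigr).
\]
Now I compute the left-hand derivative using Lemma~\ref{lem}: since $\partial_x\Phi_{\gamma_h}(w,X_{\gamma_h}(w))$ is a martingale with $d$-bracket $\xi''(w)(\partial_{xx}\Phi_{\gamma_h}(w,X_{\gamma_h}(w)))^2\,dw$, we have $\frac{d}{ds}\e(\partial_x\Phi_{\gamma_h}(s,X_{\gamma_h}(s)))^2=\xi''(s)\,\e(\partial_{xx}\Phi_{\gamma_h}(s,X_{\gamma_h}(s)))^2$. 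Substituting and using \eqref{prop3.2:eq1} to cancel the terms involving $\xi'''(s)$ and $\e(\partial_x\Phi)^2=s$, the inequality collapses to $\xi''(s)^2\,\e(\partial_{xx}\Phi_{\gamma_h}(s,X_{\gamma_h}(s)))^2\leq \xi''(s)$, i.e.\ \eqref{prop3.2:eq2} after dividing by $\xi''(s)>0$. For points of $\Omega_h$ isolated from the right one argues similarly using a left-sided difference, or by continuity from nearby non-isolated points.

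I expect the main obstacle to be the rigorous justification of the Gateaux derivative formula for $\gamma\mapsto\Phi_\gamma(0,h)$ and the differentiability in $s$ used for \eqref{prop3.2:eq2}. The former requires care because $\gamma_h(1-)$ may be infinite and the boundary condition $|x|$ is nonsmooth at $0$, so one must work with the representation of Theorem~\ref{thm0} on intervals $[0,s_1]$ with $s_1<1$ and invoke the uniform regularity bounds of Proposition~\ref{pde:prop1}$(ii)$ together with the approximation in Proposition~\ref{pde:prop1}$(iii)$ to pass limits; this is the analogue of the delicate steps in \cite[Section on optimality]{C14} and \cite{AC14}, and I would follow that route closely, noting that all the needed derivative bounds are supplied by Proposition~\ref{pde:prop1}. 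The differentiation step for \eqref{prop3.2:eq2} is then routine given Lemma~\ref{lem}, modulo keeping track of one-sided derivatives at the (possibly) isolated points of $\Omega_h$.
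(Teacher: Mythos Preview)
Your overall strategy --- first-order optimality (essentially Proposition~\ref{prop3.1}) for \eqref{prop3.2:eq1}, then the martingale identity \eqref{eq10} for \eqref{prop3.2:eq2} --- is exactly the paper's route, which defers details to \cite[Proposition~1]{C14}. But your conversion of the variational inequality into pointwise statements has a genuine gap.

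The inequality coming from Proposition~\ref{prop3.1} reads
\[
\int_0^1 \psi(w)\bigl(\nu(w)-\gamma_h(w)\bigr)\,dw\ \geq\ 0\qquad\text{for all }\nu\in\mathcal{U},
\]
where $\nu,\gamma_h$ are \emph{distribution functions} of measures on $[0,1)$; this is not $\int_0^1\psi\,d(\nu-\mu_h)\geq0$. An integration by parts (Fubini) turns it into $\int_{[0,1)}\Gamma\,d(\nu-\mu_h)\geq0$ where $\Gamma(s):=\int_s^1\psi(w)\,dw$. Because admissible $\nu$ must be nondecreasing, you cannot ``add a small bump'' to $\gamma_h$ where $\psi<0$: a localized bump makes $\nu$ non-monotone. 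In fact $\psi\geq0$ a.e.\ is generally \emph{false}: if $a<b$ both lie in $\Omega_h$ then $\Gamma(a)=\Gamma(b)=0$ together with $\Gamma\geq0$ on $(a,b)$ force $\psi=-\Gamma'$ to change sign there. What the variational inequality does give is $\Gamma(s_0)\geq0$ for every $s_0$ (take $\nu=\gamma_h+\varepsilon$ times the indicator of $[s_0,1)$) and $\int\Gamma\,d\mu_h=0$ (take $\nu=c\gamma_h$, $c>0$ arbitrary), whence $\Gamma\equiv0$ on $\Omega_h$ by continuity.

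From here the argument runs as you intend, but with $\Gamma$ in place of $\psi$: at any $s\in\Omega_h$ the function $\Gamma$ attains its global minimum $0$, so $\Gamma'(s)=-\psi(s)=0$, which is \eqref{prop3.2:eq1}, and the second-order condition gives $\Gamma''(s)=-\psi'(s)\geq0$. Your computation of $\psi'(s)$ via \eqref{eq10} is then correct and yields \eqref{prop3.2:eq2}. So the inequality $\psi'(s)\leq0$ you use is right, but it comes from $\Gamma$ having a minimum at $s$, not from $\psi$ having one --- and with this fix no special handling of isolated points of $\Omega_h$ is needed.
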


Recall from Subsection \ref{unique} that $\mathcal{P}_h$ defines a strictly convex functional on $\mathcal{U}$. Our approach to proving Theorem \ref{thm3.1} and Proposition \ref{prop3.2} relies on the directional derivative of the Parisi functional derived in Proposition \ref{prop3.1}.

\begin{proposition}\label{prop3.1}
	Let $\gamma_0,\gamma\in\mathcal{U}$. For any $\theta\in[0,1]$, define
	$
	\gamma_\theta=(1-\theta)\gamma_0+\theta\gamma.
	$
    Then
	\begin{align*}
	\frac{d\mathcal{P}_h(\gamma_\theta)}{d\theta}\Big|_{\theta=0}&=\frac{1}{2}\int_0^1\xi''(s)\bigl(\gamma(s)-\gamma_0(s)\bigr)\bigl(\e u_{\gamma_0}(s)^2-s\bigr)ds,
	\end{align*}
	where $d\mathcal{P}_h(\gamma_\theta)/d\lambda|_{\theta=0}$ is the right-derivative at $0.$ In addition, the following statements are equivalent:
	\begin{itemize}
		\item[$(i)$] $\gamma_0$ is the Parisi measure,
		\item[$(ii)$] $\frac{d\mathcal{P}_h(\gamma_\theta)}{d\theta}\Big|_{\theta=0}\geq 0$ for any $\gamma\in\mathcal{U}.$
	\end{itemize}
\end{proposition}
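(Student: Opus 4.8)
The plan is to compute the directional derivative by combining the stochastic optimal control representation from Theorem \ref{thm0} with the regularity estimates of Proposition \ref{pde:prop1}. First I would write $\mathcal{P}_h(\gamma_\theta)=\Phi_{\gamma_\theta}(0,h)-\tfrac12\int_0^1\xi''(s)s\gamma_\theta(s)\,ds$; the second term is linear in $\theta$ with derivative $-\tfrac12\int_0^1\xi''(s)s(\gamma(s)-\gamma_0(s))\,ds$, so the whole task reduces to showing
\begin{align*}
\frac{d}{d\theta}\Phi_{\gamma_\theta}(0,h)\Big|_{\theta=0}=\frac12\int_0^1\xi''(s)\bigl(\gamma(s)-\gamma_0(s)\bigr)\,\e\,u_{\gamma_0}(s)^2\,ds.
\end{align*}
For the upper bound on the derivative, I would use $\Phi_{\gamma_\theta}(0,h)=\max_u F^{0,1}_{\gamma_\theta}(u,h)$ and plug in the suboptimal choice $u=u_{\gamma_0}$, the optimizer for $\gamma_0$. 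Since $L^{0,1}_{\gamma_\theta}(u_{\gamma_0})$ is linear in $\theta$ (it has a $\gamma_\theta$ factor out front), and $C^{0,1}_{\gamma_\theta}(u_{\gamma_0},h)$ depends on $\theta$ both through the explicit $\gamma_\theta$ appearing in the drift term $\int_0^1\gamma_\theta(w)\xi''(w)u_{\gamma_0}(w)\,dw$ and through the subscript on $\Phi_{\gamma_\theta}$, I would expand $C^{0,1}_{\gamma_\theta}$ to first order: the Lipschitz bound \eqref{lip} controls the change from $\Phi_{\gamma_0}\to\Phi_{\gamma_\theta}$ at rate $O(\theta)$, and a Taylor expansion in the spatial argument using $\partial_x\Phi_{\gamma_0}(1,\cdot)$ — or rather its bounded derivatives on $[0,s_1]$ from Proposition \ref{pde:prop1}$(ii)$ together with a limiting/truncation argument near $s=1$ — gives the leading term $\e\bigl[\partial_x\Phi_{\gamma_0}(1,X_{\gamma_0}(1))\int_0^1(\gamma(w)-\gamma_0(w))\xi''(w)u_{\gamma_0}(w)\,dw\bigr]$ times $\theta$. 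Recognizing $u_{\gamma_0}(w)=\partial_x\Phi_{\gamma_0}(w,X_{\gamma_0}(w))$ and that this is a martingale (Lemma \ref{lem}, \eqref{eq10}), I would pull $\partial_x\Phi_{\gamma_0}(1,X_{\gamma_0}(1))$ through the time integral by conditioning and replace it with $u_{\gamma_0}(w)$, producing $\int_0^1(\gamma(w)-\gamma_0(w))\xi''(w)\,\e\,u_{\gamma_0}(w)^2\,dw$; combined with the linear Lagrangian term, the $L^{0,1}$ contribution halves this and yields the stated formula as an upper bound for $\limsup_{\theta\downarrow0}(\Phi_{\gamma_\theta}(0,h)-\Phi_{\gamma_0}(0,h))/\theta$. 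For the matching lower bound I would run the same computation with $u=u_{\gamma_\theta}$ plugged into the representation for $\Phi_{\gamma_0}$ (using $\Phi_{\gamma_0}(0,h)\ge F^{0,1}_{\gamma_0}(u_{\gamma_\theta},h)$), and use continuity of the optimizers — Lemma \ref{prop1} plus the approximation in Proposition \ref{pde:prop1}$(iii)$ — to see that $u_{\gamma_\theta}\to u_{\gamma_0}$ in $d_0$ as $\theta\downarrow0$, so the two one-sided estimates coincide in the limit.

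For the equivalence of $(i)$ and $(ii)$: the implication $(i)\Rightarrow(ii)$ is immediate, since if $\gamma_0$ minimizes $\mathcal{P}_h$ then $\theta\mapsto\mathcal{P}_h(\gamma_\theta)$ is minimized at $\theta=0$ on $[0,1]$, forcing the right-derivative to be nonnegative. For $(ii)\Rightarrow(i)$, I would invoke convexity: by Lemma \ref{lem9} (or rather its consequence that $\mathcal{P}_h$ is convex along the segment $\gamma_\theta$) the function $\theta\mapsto\mathcal{P}_h(\gamma_\theta)$ is convex on $[0,1]$, so $\mathcal{P}_h(\gamma)-\mathcal{P}_h(\gamma_0)=\mathcal{P}_h(\gamma_1)-\mathcal{P}_h(\gamma_0)\ge \frac{d}{d\theta}\mathcal{P}_h(\gamma_\theta)|_{\theta=0}\ge0$ for every $\gamma\in\mathcal{U}$; hence $\gamma_0$ is a global minimizer, i.e. the Parisi measure.

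The main obstacle I anticipate is the differentiation under the expectation in $C^{0,1}_{\gamma_\theta}$ near the terminal time $s=1$, where the boundary datum $|x|$ is not differentiable at the origin and, when $\gamma_0(1-)=\infty$, the spatial derivatives of $\Phi_{\gamma_0}$ can blow up as $s\uparrow1$. Proposition \ref{pde:prop1}$(ii)$ only gives control on $[0,s_1]$ for $s_1<1$, so the argument must be set up to isolate the contribution of $[s_1,1]$ and show it is negligible — e.g. by splitting the drift integral at $s_1$, bounding the tail using $|u_{\gamma_0}|\le1$ and the integrability $\int_0^1\xi''(s)\gamma_0(s)\,ds<\infty$, and letting $s_1\uparrow1$ after the limit in $\theta$. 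Handling this interchange of limits carefully (and verifying the martingale/conditioning step survives the truncation) is where the real work lies; the rest follows the template of \cite[Lemma 5 and Section 4]{C14}.
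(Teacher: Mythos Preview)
Your proposal is correct and follows essentially the same route the paper takes, namely the variational/optimal-control argument of \cite[Theorem 2]{C14} (the paper in fact omits the proof entirely and just points to that reference). Two small slips worth fixing before you write it out: first, your ``upper'' and ``lower'' labels are swapped --- plugging $u_{\gamma_0}$ into $F^{0,1}_{\gamma_\theta}$ gives $\Phi_{\gamma_\theta}(0,h)\ge F^{0,1}_{\gamma_\theta}(u_{\gamma_0},h)$, hence a \emph{lower} bound on the difference quotient, while plugging $u_{\gamma_\theta}$ into $F^{0,1}_{\gamma_0}$ gives the upper bound; second, in $C^{0,1}_{\gamma_\theta}$ the terminal value $\Phi_{\gamma_\theta}(1,\cdot)=|\cdot|$ does not depend on $\gamma_\theta$ at all, so there is no ``subscript'' contribution to track --- the only $\theta$-dependence in $C^{0,1}_{\gamma_\theta}(u_{\gamma_0},h)$ is through the drift $\int_0^1\gamma_\theta\xi'' u_{\gamma_0}\,dw$, which simplifies the expansion you sketch.
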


An analogue of Proposition \ref{prop3.1} was established in \cite[Theorem 2]{C14} for the Parisi functional in the formulation of the Parisi formula for the limiting free energy. One may find that exactly the same argument also applies to prove Proposition \ref{prop3.1}, so we once again do not reproduce the proof here.

\begin{proof}[\bf Proof of Theorem \ref{thm3.1}]
	We argue by contradiction. Assume that $\gamma_{h} \equiv 0$. Then the PDE \eqref{pde} reduces to a heat equation, in which case, one can solve
	\begin{align*}
	\Phi_{\gamma_{h} }(s,x)&=\e |x+\sqrt{\xi'(1)-\xi'(s)}Z|,
	\end{align*}
	where $Z$ is a standard normal random variable. Thus, $$
	\partial_x\Phi_{\gamma_h}(s,x)=\e\mbox{sign}\bigl(x+\sqrt{\xi'(1)-\xi'(s)}Z\bigr)
	$$
	and
	$$
	\e u_{\gamma_{h} }(s)^2=\e_{Z'} \bigl(\e_Z \mbox{sign}\bigl(h+\sqrt{\xi'(s)}Z'+\sqrt{\xi'(1)-\xi'(s)}Z\bigr)\bigr)^2,
	$$
	where $Z'$ is standard normal independent of $Z$ and $\e_Z$ and $\e_{Z'}$ are the expectations with respect to $Z$ and $Z'$, respectively.
	We now compute the derivative of $\e u_{\gamma_{h} }(s)^2$. To this end, define
	\begin{align*}
	f(s)&=\e_{Z'} g(s,h+\sqrt{s}Z')^2,\\
	g(s,x)&=\e_Z\mbox{sign}(x+\sqrt{1-s}Z).
	\end{align*}
	Observe that \begin{align*}
	g(s,x)&=1-2\p\bigl(x+\sqrt{1-s}Z<0\bigr)\\
	&=1-2\int_{-\infty}^0\frac{1}{\sqrt{2\pi(1-s)}}\exp\Bigl(-\frac{(z-x)^2}{2(1-s)}\Bigr)dz\\
	&=1-2\int_0^\infty\frac{1}{\sqrt{2\pi(1-s)}}\exp\Bigl(-\frac{(z+x)^2}{2(1-s)}\Bigr)dz,
	\end{align*}
	from which one can directly check that $$
	\partial_sg=-\frac{1}{2}\partial_{xx}g,\,\,\forall(s,x)\in[0,1)\times\mathbb{R}.
	$$
	Set $Z_{s}=h+\sqrt{s}Z'.$ Then
	\begin{align*}
	f'(s)&=2\e\partial_sg(s,Z_s)g(s,Z_s)+\frac{1}{\sqrt{s}}\e Z'\partial_xg(s,Z_s)g(s,Z_s)\\
	&=2\e\partial_sg(s,Z_s)g(s,Z_s)+\e \partial_{xx}g(s,Z_s)g(s,Z_s)+\e \bigl(\partial_xg(s,Z_s)\bigr)^2\\
	&=\e \bigl(\partial_xg(s,Z_s)\bigr)^2.
	\end{align*}
	To compute the last term, note that
	\begin{align*}
	\partial_xg(s,x)&=2\int_0^\infty \frac{(z+x)}{\sqrt{2\pi}(1-s)^{3/2}}\exp\Bigl(-\frac{(z+x)^2}{2(1-s)}\Bigr)dz\\
	&=-\frac{2}{\sqrt{2\pi(1-s)}}\exp\Bigl(-\frac{(z+x)^2}{2(1-s)}\Bigr)\Big|_{0}^\infty\\
	&=\frac{2}{\sqrt{2\pi(1-s)}}\exp\Bigl(-\frac{x^2}{2(1-s)}\Bigr).
	\end{align*}
	This gives
	\begin{align*}
	\e \bigl(\partial_xg(s,Z_s)\bigr)^2&=\frac{2}{\pi (1-s)}\frac{1}{\sqrt{2\pi s}}\int_{-\infty}^{\infty} \exp\Bigl(-\frac{z^2}{(1-s)}-\frac{(z-h)^2}{2s}\Bigr)dz.
	\end{align*}
	Here,
	\begin{align*}
	2sz^2+(1-s)(z-h)^2&=2sz^2+(1-s)z^2-2h(1-s)z+(1-s)h^2\\
	&=(1+s)z^2-2h(1-s)z+(1-s)h^2\\
	&=(1+s)\Bigl(z-\frac{(1-s)}{1+s}h\Bigr)^2-\frac{(1-s)^2}{1+s}h^2+(1-s)h^2\\
	&=(1+s)\Bigl(z-\frac{(1-s)}{1+s}h\Bigr)^2+\frac{2s(1-s)}{1+s}h^2.
	\end{align*}
	Therefore,
	\begin{align}
	\begin{split}
	\label{thm3.1:proof:eq1}
	f'(s)&=\e \bigl(\partial_xg(s,Z_s)\bigr)^2\\
	&=\frac{2}{\pi (1-s)}\frac{1}{\sqrt{2\pi s}}\int_{-\infty}^{\infty} \exp\Bigl(-\frac{(1+s)}{2s(1-s)}\Bigl(z-\frac{(1-s)}{1+s}h\Bigr)^2\Bigr)dz\exp\Bigl(-\frac{h^2}{1+s}\Bigr)\\
	&=\frac{2}{\pi\sqrt{1-s^2}}\exp\Bigl(-\frac{h^2}{1+s}\Bigr).
	\end{split}
	\end{align}
	To compute the derivative of $\e u_{\gamma_{h} }(s)^2$, we write
	\begin{align*}
	\e u_{\gamma_{h} }(s)^2&=\e_{Z'} \bigl(\e_Z \mbox{sign}\bigl(h/\sqrt{\xi'(1)}+\sqrt{\xi'(s)/\xi'(1)}Z'+\sqrt{1-\xi'(s)/\xi'(1)}Z\bigr)\bigr)^2.
	\end{align*}
	Using \eqref{thm3.1:proof:eq1} and the chain rule, we obtain
	\begin{align*}
	\bigl(\e u_{\gamma_{h} }(s)^2\bigr)'&=\frac{2\xi''(s)}{\pi\sqrt{\xi'(1)^2-\xi'(s)^2}}\exp\Bigl(-\frac{h^2}{\xi'(1)+\xi'(s)}\Bigr).
	\end{align*}
	Finally, from this formula,
	\begin{align*}
	\e u_{\gamma_{h} }(s)^2-s&=(1-\e u_{\gamma_{h} }(1)^2)-(s-\e u_{\gamma_{h} }(s)^2)\\
	&=\int_s^1\bigl(w-\e u_{\gamma_{h} }(w)^2\bigr)'dw\\
	&=\int_s^11-\bigr(\e u_{\gamma_{h} }(w)^2\bigr)'dw\\
	&<0,
	\end{align*}
	whenever $s$ is sufficiently close to $1.$ However, for any $\gamma\in\mathcal{U}$ with $\gamma=0$ on $[0,s_1]$ for $s_1$ being arbitrarily close to $1$, we have
	\begin{align*}
	\frac{d}{d\lambda}\mathcal{P}_h((1-\lambda)\gamma_{h} +\lambda \gamma)\Big|_{\lambda=0}=\frac{1}{2}\int_{s_1}^1\xi''(s)\gamma(s)\bigl(\e u_{\gamma_{h} }(s)^2-s\bigr)ds<0,
	\end{align*}
	which contradicts Proposition \ref{prop3.1}$(ii)$.
\end{proof}

\begin{proof}[\bf Proof of Proposition \ref{prop3.2}] Using the minimality of $\gamma_{h} $, one can vary $\gamma$ in Proposition \ref{prop3.1} to deduce \eqref{prop3.2:eq1}, while the validity of \eqref{prop3.2:eq2} is assured by \eqref{prop3.2:eq1} and the equation \eqref{eq10}. For detail, we refer the readers to \cite[Proposition 1]{C14} for an identical proof.
\end{proof}

\section{Establishing disorder chaos}
\label{sec:disorder_chaos}
Chaos in disorder at positive temperature has been intensively studied in recent years, see \cite{chatt,C12,C15,C14}. One of the major approaches is to control the coupled free energy with overlap constraint via a two-dimensional version of the Guerra-Talagrand (GT) inequality. Lately, this method was pushed forward to show chaos in disorder at zero temperature in \cite{ArnabChen15} for the spherical mixed even $p$-spin models. In this section, we utilize the ideas from \cite{C14,ArnabChen15} to establish Theorem~\ref{thm-1}.

\subsection{Guerra-Talagrand inequality}

We begin by recalling the GT inequality for the coupled free energy in terms of Parisi's PDE from \cite{C14}. Let 
\begin{align}\label{add:eq1}
V_N&:=\{R(\sigma^1,\sigma^2):\sigma^1,\sigma^2\in\Sigma_N\}.
\end{align} 
Let $q\in V_N$ be fixed. For $\beta>0$, define the coupled free energy as
\begin{align*}
F_{N,\beta}(q)&:=\frac{1}{N\beta}\e\log \sum_{R(\sigma^1 ,\sigma^2)=q}\exp \beta\bigl(H_{N,t}^1(\sigma^1 )+H_{N,t}^2(\sigma^2  )\bigr).
\end{align*}
Denote $\iota=1$ if $q\geq 0$ and $\iota=-1$ if $q<0.$ Define
\begin{align*}
T(s)&=\left[
\begin{array}{cc}
\xi''(s)&\iota t\xi''(s)\\
\iota t\xi''(s)&\xi''(s)
\end{array}\right]
\end{align*}
for $s\in[0,|q|)$ and
\begin{align}\label{eq-3}
T(s)&=\left[
\begin{array}{cc}
\xi''(s)&0\\
0&\xi''(s)
\end{array}\right]
\end{align}
for $s\in[|q|,1].$ Let $\mathcal{M}$ be the collection of all functions $\alpha$ on $[0,1]$ induced by a probability measure $\mu$, i.e., $\alpha(s)=\mu([0,s]).$ For $\alpha\in\mathcal{M}$,
consider the following PDE,
\begin{align}\label{sec4:eq1}
\partial_s\Psi_{\alpha,\beta}&=-\frac{1}{2}\bigl(\bigl\la T,\triangledown^2\Psi_{\alpha,\beta}\bigr\ra+\beta\alpha\bigl\la T\triangledown \Psi_{\alpha,\beta},\triangledown \Psi_{\alpha,\beta}\bigr\ra\bigr)
\end{align}
for $(\lambda,s,\bx)\in\mathbb{R}\times[0,1)\times\mathbb{R}^2$ with boundary condition
\begin{align}\label{sec4:eq2}
\Psi_{\alpha,\beta}(\lambda,1,\bx)=\frac{1}{\beta}\log\bigl(\cosh(\beta x_1)\cosh(\beta x_2)\cosh(\beta\lambda)+\sinh(\beta x_1)\sinh(\beta x_2)\sinh(\beta\lambda)\bigr).
\end{align}
The GT inequality established in  \cite[Theorem 6]{C14} states that for any $\beta>0$ and $\alpha\in\mathcal{M}$,
\begin{align}
\begin{split}\label{GT}
F_{N,\beta}(q)&\leq \frac{2\log 2}{\beta}+\Psi_{\alpha,\beta}(\lambda,0,h,h)-\lambda q\\
&\quad-\Bigl(\int_0^1\beta\alpha(s)s\xi''(s)ds+t\int_0^{|q|}\beta\alpha(s)s\xi''(s)ds\Bigr).
\end{split}
\end{align}
Note that the form here is slightly different than the one appeared in \cite[Equation (45)]{C14} by a change of variable in $\beta.$

To use this bound in our case, we need to convert it into the form for the coupled maximal energy by sending the inverse temperature $\beta$ to infinity. For any $\gamma\in\mathcal{U}$ with $\gamma(1-)<\infty,$ letting $\alpha=\gamma/\beta$, from \eqref{sec4:eq1} and \eqref{sec4:eq2}, we obtain that in the limit $\beta\rightarrow\infty,$
\begin{align}
\begin{split}\label{sec4:eq3}
F_N(q)&:=\frac{1}{N}\e\max_{R(\sigma^1 ,\sigma^2  )=q}\bigl(H_{N,t}^1(\sigma^1 )+H_{N,t}^2(\sigma^2  )\bigr)\leq \Lambda(\lambda,\gamma,q),
\end{split}
\end{align}
where
\begin{align}
\label{add:eq3}
\Lambda(\lambda,\gamma,q) :=\Psi_\gamma(\lambda,0,h,h)-\lambda q-\Bigl(\int_0^1\gamma(s)s\xi''(s)ds+t\int_0^{|q|}\gamma(s)s\xi''(s)ds\Bigr).
\end{align}
Let
\begin{align}
\begin{split}\label{gf}
g(\lambda,\bx)&:=\max\bigl(x_1+x_2+\lambda,-x_1-x_2+\lambda,x_1-x_2-\lambda,-x_1+x_2-\lambda\bigr).
\end{split}
\end{align}
Here, $\Psi_\gamma$ is the weak solution to the following equation,
\begin{align*}
\partial_s\Psi_{\gamma}&=-\frac{1}{2}\bigl(\bigl\la T,\triangledown^2\Psi_{\gamma}\bigr\ra+\gamma\bigl\la T\triangledown \Psi_{\gamma},\triangledown \Psi_{\gamma}\bigr\ra\bigr)
\end{align*}
for $(\lambda,s,\bx)\in\mathbb{R}\times[0,1)\times\mathbb{R}^2$
with boundary condition
\begin{align}\label{eq1}
\begin{split}
\Psi_{\gamma}(\lambda,1,\bx)&=g(\lambda,\bx),
\end{split}
\end{align}
where $\triangledown^2\Psi_\gamma$ is the Hessian and $\triangledown \Psi_\gamma$ is the gradient of $\Psi_\gamma$ in $\bx$ in the classical sense.
With an approximation procedure, one may relax the condition $\gamma(1-)<\infty$ to obtain the validity of \eqref{sec4:eq3} for arbitrary $\gamma\in\mathcal{U}$. We comment that these results can be justified by a similar treatment as \cite{AC16} and Proposition \ref{pde:prop1} with some minor adjustments. The fact that $\Psi_\gamma$ is two-dimensional does not influence the argument in any essential way. 

\subsection{Determination of $q_{t,h}$}\label{sec4.2}

We first define the overlap constant $q_{h}$ as the minimum of $\Omega_h$. Recall that  $\mu_{h} $ is the measure induced by $\gamma_{h}$ and $\Omega_h$ is the support of $\mu_{h}.$ From Theorem \ref{thm3.1}, $\Omega_h\neq \emptyset$ and $q_{h} <1.$ From Proposition~\ref{prop3.2}, letting $s=q_{h}$ gives
\begin{align}
\begin{split}
\label{sec4.2:eq1}
\e\bigl( \partial_x\Phi_{\gamma_{h} }(q_{h} ,h+\chi)\bigr)^2&=q_{h} ,
\end{split}\\
\begin{split}
\label{sec4.2:eq2}
\xi''(q_{h} )\e\bigl(\partial_{xx}\Phi_{\gamma_{h} }(q_{h} ,h+\chi)\bigr)^2&\leq 1
\end{split}
\end{align}
for $\chi$ a centered Gaussian random variable with variance $\xi'(q_{h} ).$  From the first equation and Lemma~\ref{lem3}, $q_{h} >0$ if $h\neq 0.$ Define
\begin{align*}
\psi_t(s)&=\e \partial_x\Phi_{\gamma_{h} }(q_{h} ,h+\chi_1(s))\partial_x\Phi_{\gamma_{h} }(q_{h} ,h+\chi_2(s))
\end{align*}
for $s\in[-q_{h} ,q_{h} ],$ where $\chi_1(s)$ and $\chi_2(s)$ are jointly Gaussian with mean zero and covariance
\begin{align*}
&\e(\chi_1(s))^2=\e(\chi_2(s))^2=\xi'(q_{h} ),\\
&\e \chi_1(s)\chi_2(s)=t\xi'(s).
\end{align*}
Assume that $h=0$. If $q_{h} >0$, then $\psi_1$ has two fixed points $0$ and $q_{h} $, which contradicts \cite[Theorem 5]{C141}. Thus, $q_{h} =0$ if $h=0.$ 

The overlap constant $q_{t,h}$ of Theorem \ref{thm-1} is determined by the fixed point of $\psi_t$.
\begin{proposition}\label{prop4.2}
	For any $t\in[0,1],$ $\psi_t$ maps $[-q_h,q_h]$ into itself and has a unique fixed point $q_{t,h},$ where $q_{t,h}=0$ if $h=0$ and $q_{t,h}>0$ if $h>0.$ In addition, $t\mapsto q_{t,h}$ is continuous with $q_{1,h}=q_{h} .$
\end{proposition}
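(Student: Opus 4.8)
The plan is to reduce everything to elementary one‑dimensional analysis of $\psi_t$. Write $f:=\partial_x\Phi_{\gamma_h}(q_h,\cdot)$, so that $\psi_t(s)=\e\,f(h+\chi_1(s))f(h+\chi_2(s))$. I would first record the three facts about $f$ that do all the work: $f$ is odd and strictly increasing (Lemma~\ref{lem3}); $f\in C^1$ with $f'=\partial_{xx}\Phi_{\gamma_h}(q_h,\cdot)\ge 0$ bounded (Proposition~\ref{pde:prop1}$(ii)$); and $\|f\|_\infty\le 1$ because the boundary datum of $\Phi_{\gamma_h}$ is $|x|$ --- in particular $f$ is \emph{not} affine, since a nonzero affine odd function is unbounded. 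The self‑map claim is then immediate: the marginal of each $\chi_i(s)$ is $N(0,\xi'(q_h))$, so by the consistency equation \eqref{sec4.2:eq1} we have $\e f(h+\chi_i(s))^2=q_h$, and Cauchy--Schwarz gives $|\psi_t(s)|\le q_h$. When $h=0$ one has $q_h=0$ (again by \eqref{sec4.2:eq1} and Lemma~\ref{lem3}), the interval collapses to $\{0\}$, and $q_{t,0}=0$; so from here on I assume $h>0$, hence $q_h>0$ and $\xi'(q_h),\xi''(q_h)>0$.

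The core estimate is $\psi_t'(s)<1$ on $(-q_h,q_h)$. Since the covariance of $(\chi_1(s),\chi_2(s))$ equals $t\xi'(s)$ with the variances held fixed, Gaussian integration by parts (Price's theorem) gives $\psi_t'(s)=t\,\xi''(s)\,\e[f'(h+\chi_1(s))f'(h+\chi_2(s))]\ge 0$, so $\psi_t$ is nondecreasing. Bounding the expectation by $\e f'(h+\chi)^2$ with $\chi\sim N(0,\xi'(q_h))$, and using that $\xi''$ is even and nondecreasing on $[0,\infty)$ (so $\xi''(s)\le\xi''(q_h)$ for $|s|\le q_h$) together with the second consistency relation \eqref{sec4.2:eq2}, I get $\psi_t'(s)\le t\,\xi''(q_h)\e f'(h+\chi)^2\le t\le 1$. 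For $t<1$ this is already $<1$. For $t=1$ the same chain gives $\le 1$, and I would recover strictness from the Cauchy--Schwarz step: on $(-q_h,q_h)$ the pair $(\chi_1(s),\chi_2(s))$ is a nondegenerate Gaussian (its correlation $\xi'(s)/\xi'(q_h)$ is strictly less than $1$), so equality in Cauchy--Schwarz would force $f'(h+\chi_1(s))=f'(h+\chi_2(s))$ a.s., hence $f'$ a.e.\ constant, hence $f$ affine, a contradiction.

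With $\psi_t'(s)<1$ on the interior, $g_t:=\psi_t-\mathrm{id}$ is continuous, strictly decreasing on $[-q_h,q_h]$, and satisfies $g_t(q_h)=\psi_t(q_h)-q_h\le 0$, so it has exactly one zero $q_{t,h}$. To see $q_{t,h}>0$, note that $\xi'(0)=0$, so $\chi_1(0)$ and $\chi_2(0)$ are independent and $\psi_t(0)=(\e f(h+\chi))^2$; a pointwise check using oddness, strict monotonicity of $f$ and $h>0$ shows $f(h+c)+f(h-c)>0$ for every $c$, hence $\e f(h+\chi)>0$ and $g_t(0)=\psi_t(0)>0$, forcing the unique zero into $(0,q_h]$. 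For $q_{1,h}=q_h$: at $t=1$, $s=q_h$ the covariance equals the common variance, so $\chi_1(q_h)=\chi_2(q_h)$ a.s.\ and $\psi_1(q_h)=\e f(h+\chi)^2=q_h$ by \eqref{sec4.2:eq1}; thus $q_h$ is the fixed point of $\psi_1$. Finally, $(t,s)\mapsto\psi_t(s)$ is jointly continuous (a Gaussian expectation of the bounded continuous function $(x,y)\mapsto f(h+x)f(h+y)$, whose covariance parameter depends continuously on $(t,s)$), and each $g_t$ is strictly decreasing with a unique zero, so a routine compactness argument (every subsequential limit of $q_{t_n,h}\in[-q_h,q_h]$ is a zero of $g_t$, hence equals $q_{t,h}$) yields continuity of $t\mapsto q_{t,h}$.

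The step I expect to be the real obstacle is the strict inequality $\psi_1'(s)<1$ in the borderline case $t=1$: there every inequality in the chain above is an equality except possibly the Cauchy--Schwarz one, so the whole argument hinges on converting ``$(\chi_1(s),\chi_2(s))$ is nondegenerate'' into ``$f$ would be affine, contradicting $\|f\|_\infty\le 1$,'' which is precisely where the specific structure of the Parisi PDE solution (oddness, strict monotonicity via Lemma~\ref{lem3}, and the $|x|$ boundary condition) is indispensable. A secondary technical point is justifying the differentiation formula for $\psi_t'$ throughout $(-q_h,q_h)$, including near the endpoints where the Gaussian vector degenerates in the limit --- but since only pointwise strict monotonicity of $g_t$ on the open interval is needed, continuity of $g_t$ on the closed interval takes care of the endpoints.
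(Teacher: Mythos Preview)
Your proof is correct and shares the paper's core mechanism: Cauchy--Schwarz together with \eqref{sec4.2:eq1} for the self-map, Gaussian integration by parts for $\psi_t'$, and the bound via \eqref{sec4.2:eq2}. The difference is one of packaging. The paper outsources existence and uniqueness of the fixed point (for all $t\in[0,1]$) to an external reference \cite[Theorem~5]{C141}, and then uses the derivative estimate only on $(s,t)\in(-q_h,q_h)\times(0,1)$ to run the implicit function theorem for continuity on the open interval, patching in the endpoints $t=0,1$ by a limsup/liminf argument. You instead make everything self-contained: uniqueness for every $t$ comes directly from $\psi_t'<1$ on the interior, with the borderline case $t=1$ handled by your ``equality in Cauchy--Schwarz forces $f$ affine, contradicting $\|f\|_\infty\le 1$'' observation, and continuity follows from the cleaner subsequential-limit argument. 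Your route trades the black-box citation for one extra paragraph; the paper's is shorter but opaque about what \cite{C141} is actually supplying. One minor quibble: your parenthetical justification ``by \eqref{sec4.2:eq1} and Lemma~\ref{lem3}'' for $q_h=0$ when $h=0$ is not quite right --- that fact is established just before the proposition via the same uniqueness principle you go on to prove --- but since it is already available in the paper you may simply cite it.
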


\begin{proof} By the Cauchy-Schwarz inequality and \eqref{sec4.2:eq1}, it is easy to see that $\psi_t$ maps $[-q_h,q_h]$ into itself. If $h=0,$ then $\psi_t$ is defined only at the origin $0$ and obviously $0$ is the unique fixed point of $\psi_t.$ For $h\neq 0,$ we use \eqref{sec4.2:eq1}, \eqref{sec4.2:eq2}, and \cite[Theorem 5]{C141} to obtain the existence and uniqueness of the fixed point of $\psi_t.$ Moreover, observe that $$
	\psi_t(0)=\bigl(\e\partial_x\Phi_\gamma(q_{h} ,h+\chi)\bigr)^2.
	$$
	From Lemma \ref{lem3}, we see that $\psi_t(0)>0$ and thus, $q_{t,h}>0.$ Finally, from \eqref{sec4.2:eq2} and the Cauchy-Schwarz inequality, using Gaussian integration by parts gives
	\begin{align*}
	\psi_t'(s)&=t\xi''(s)\e \partial_{xx}\Phi_{\gamma_{h} }(q_{h} ,h+\chi_1(s))\partial_{xx}\Phi_{\gamma_{h} }(q_{h} ,h+\chi_2(s))\\
	&<\xi''(q_{h} )\e\bigl(\partial_{xx}\Phi_{\gamma_{h} }(q_{h} ,h+\chi)\bigr)^2\\
	&\leq 1
	\end{align*}
	for $(s,t)\in(-q_h,q_h)\times (0,1).$
	From this, $(\psi_t(s)-s)'<0$.
	By the implicit function theorem, there exists a continuous function $q(t)$ on $(0,1)$ such that $\psi_t(q(t))=q(t).$ From the uniqueness of $q_{t,h}$, it follows that $q_{t,h}=q(t)$ is continuous on $(0,1).$ To obtain the continuity of $t\mapsto q_{t,h}$ at $0$ and $1.$ Denote $a=\limsup_{t\rightarrow 1-}q_{t,h}$. From the continuity of $\psi_t(s)$ in both variables $(s,t),$
	$$
	\psi_1(a)=\limsup_{t\rightarrow 1-}\psi_t(q_{t,h})=\limsup_{t\rightarrow 1-}q_{t,h}=a,
	$$
	which implies that $a=q_{1,h}$ by the uniqueness of $q_{1,h}.$ Similarly, one may argue that $\liminf_{t\rightarrow 1-}q_{t,h}=q_{1,h}$. Thus, $t\mapsto q_{t,h}$ is continuous at $1.$ Similarly, we can conclude that this function is also continuous at $0.$ The equation $q_{1,h}=q_{h}$ is valid directly by the definition of $\psi_1.$
\end{proof}

\subsection{Control of the GT bound}

Recall the GT bound from \eqref{sec4:eq3}. Although $F_N(q)$ is defined only on $V_N,$ the functional $\Lambda(\lambda,\gamma,q)$ is indeed well-defined on $\mathbb{R}\times\mathcal{U}\times[-1,1].$ This allows us to define $$
\Lambda(q)=\inf_{\lambda\in\mathbb{R},\gamma\in\mathcal{U}}\Lambda(\lambda,\gamma,q)
$$
for $q\in[-1,1].$ The main goal of this subsection is to get the following controls for $\Lambda(q).$
\begin{proposition}
	\label{sec4:prop1} Let $t\in(0,1).$ Recall the overlap constant $q_{t,h}$ from Proposition \ref{prop4.2}.
	\begin{itemize}
		\item[$(i)$] For any $q\in [-q_{h} ,q_{h} ]\setminus\{q_{t,h}\},$ $\Lambda(q)<2M(h)$,
		\item[$(ii)$] 	For any $q\notin [-q_{h} ,q_{h} ]$, we have $\Lambda(q)<2M(h)$.
	\end{itemize}
\end{proposition}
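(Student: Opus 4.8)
The plan is to establish both parts of Proposition \ref{sec4:prop1} by combining the GT bound \eqref{sec4:eq3} with a careful choice of the test function $\gamma$ and the parameter $\lambda$, so that the right-hand side $\Lambda(\lambda,\gamma,q)$ can be compared directly with $2M(h)=2\mathcal{P}_h(\gamma_h)$. The natural guess is $\gamma = \gamma_h$ and $\lambda = 0$ when $q = 0$, but for general $q$ one must build a two-dimensional ansatz out of $\gamma_h$ that respects the block structure of $T(s)$ in \eqref{eq-3}. Concretely, I would first record the baseline identity $\Lambda(0,\gamma_h,0) = 2\Phi_{\gamma_h}(0,h) - \int_0^1 \gamma_h(s)s\xi''(s)\,ds \cdot 2 = 2M(h)$, using that at $q=0$ and $\lambda=0$ the two-dimensional PDE $\Psi_{\gamma_h}$ decouples into two independent copies of the one-dimensional Parisi PDE $\Phi_{\gamma_h}$ (since $T$ is diagonal on all of $[0,1]$ when $q=0$, and the boundary condition $g(0,\bx) = |x_1|+|x_2|$ factorizes). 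This shows $\Lambda(0) \le 2M(h)$, and the whole content of the proposition is that the inequality is \emph{strict} away from $q_{t,h}$.

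For the strictness I would adopt the differential approach of \cite{C14}: define $h(q) := \Lambda(q) - 2M(h)$ and study its behavior near $q=0$ (and more generally on $[-q_h,q_h]$) by computing $\partial_\lambda \Lambda(\lambda,\gamma_h,q)|_{\lambda=0}$ and $\partial_q$ of the resulting expression. The key computation is that $\partial_\lambda \Psi_{\gamma_h}(\lambda,0,h,h)|_{\lambda=0}$ equals the expected product $\e\bigl[\partial_{x}\Phi_{\gamma_h}(q_h, h+\chi_1)\,\partial_x\Phi_{\gamma_h}(q_h,h+\chi_2)\bigr]$ with the appropriate correlated Gaussians — i.e. essentially $\psi_t(q)$ from Subsection \ref{sec4.2} — via the martingale property in Lemma \ref{lem} and a chain-rule/Girsanov argument on the coupled SDE. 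Optimizing over $\lambda$ then produces a self-consistency equation whose solution is exactly the fixed point $q_{t,h}$ of Proposition \ref{prop4.2}; because $\psi_t$ is a strict contraction on $(-q_h,q_h)$ (the estimate $\psi_t'(s)<1$ was shown in the proof of Proposition \ref{prop4.2}, using \eqref{sec4.2:eq2}), the function $q\mapsto \Lambda(q)$ has a \emph{strict} maximum at $q=q_{t,h}$ relative to $2M(h)$, and $\Lambda(q_{t,h})=2M(h)$, giving part $(i)$.

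For part $(ii)$, when $|q|>q_h$ the matrix $T(s)$ has the coupled form on the larger interval $[0,|q|)$, and one exploits that $\gamma_h$ vanishes on $[0,q_h)$: on that sub-interval the PDE has no nonlinear term, so the relevant second moments $\e u_{\gamma_h}(s)^2$ are strictly smaller than $s$ there (this is precisely the content of the computation in the proof of Theorem \ref{thm3.1}, where $\e u_{\gamma_h}(s)^2 - s < 0$ for $s$ near — and by the same monotonicity, below — the support). Plugging $\gamma = \gamma_h$ and $\lambda = 0$ into \eqref{add:eq3} and comparing with $2M(h)$, the extra term $-t\int_0^{|q|}\gamma_h(s)s\xi''(s)\,ds$ plus the discrepancy between $\Psi_{\gamma_h}(0,0,h,h)$ and $2\Phi_{\gamma_h}(0,h)$ coming from the off-diagonal part of $T$ on $[q_h,|q|)$ yields a strictly negative net contribution. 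Making this last comparison quantitative — i.e. showing the coupling on $[q_h,|q|)$ strictly \emph{lowers} $\Psi_{\gamma_h}(0,0,h,h)$ relative to the decoupled value — is, I expect, the main obstacle; it should follow from a convexity/Jensen argument on the SDE representation of $\Psi_{\gamma_h}$ (the coupled drift reduces the spread of $|x_1|-$type increments), analogous to the proof of Lemma \ref{lem3}, together with the strict positivity of $\partial_{xx}\Phi_{\gamma_h}$ from Lemma \ref{lem3} to rule out the degenerate equality case.
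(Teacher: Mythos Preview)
Your treatment of part $(i)$ is essentially the paper's argument: compute $\Lambda(0,\gamma_h,q)=2M(h)$ for $|q|\le q_h$ (using $\gamma_h\equiv 0$ on $[0,q_h)$ so the extra integral vanishes and the two-dimensional PDE decouples via Lemma~\ref{sec4.3:lem1}), then differentiate in $\lambda$ to get $\partial_\lambda\Lambda(0,\gamma_h,q)=\psi_t(q)-q$, and perturb $\lambda$ away from $0$ whenever $q\neq q_{t,h}$. You do not need the contraction estimate $\psi_t'<1$ here; uniqueness of the fixed point (Proposition~\ref{prop4.2}) suffices.

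Part $(ii)$, however, has a genuine gap. You propose to plug $\gamma=\gamma_h$, $\lambda=0$ into $\Lambda$ and argue that the off-diagonal coupling on $[q_h,|q|)$ ``strictly lowers'' $\Psi_{\gamma_h}(0,0,h,h)$ below $2\Phi_{\gamma_h}(0,h)$ by a Jensen-type argument. This does not work as stated: with $\gamma=\gamma_h$ the integral terms in $\Lambda(0,\gamma_h,q)$ do \emph{not} match those in $2M(h)$ --- there is a leftover $-t\int_0^{|q|}\gamma_h s\xi''\,ds$ that you would need to beat, and there is no reason the coupling should produce a decrease of exactly that size. The paper instead chooses the modified test function
\[
\gamma_t=\frac{\gamma_h}{1+t}\ \text{on }[0,|q|),\qquad \gamma_t=\gamma_h\ \text{on }[|q|,1),
\]
which is engineered precisely so that $\int_0^1\gamma_t s\xi''\,ds+t\int_0^{|q|}\gamma_t s\xi''\,ds=\int_0^1\gamma_h s\xi''\,ds$, reducing the claim to $\Psi_{\gamma_t}(0,0,h,h)<2\Phi_{\gamma_h}(0,h)$. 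The strictness is then obtained not by convexity/Jensen but by a contradiction on the optimal controls (Proposition~\ref{sec4.3:prop1}): if equality held, the maximizer $(v_1,v_2)$ would satisfy $v_1=\iota v_2$ on $[q_h,|q|]$, forcing the driving Brownian motions $B_1,B_2$ (obtained from $\mathbf{W}$ via $T^{1/2}$) to satisfy $B_1=\iota B_2$, which contradicts $\e B_1(w)B_2(w)=\iota t w$ for $t<1$. Your reference to Theorem~\ref{thm3.1} and the sign of $\e u_{\gamma_h}(s)^2-s$ is not the relevant mechanism here.
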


The remainder of the subsection is devoted to establishing Proposition \ref{sec4:prop1}. First of all, we adapt the stochastic optimal control representation for the PDE $\Psi_\gamma$ in a similar manner as Theorem \ref{thm0}.  Denote by $\mathbf{W}=\{\mathbf{W}(w)=({W}_1(w),{W}_2(w)),\mathscr{G}_w,0\leq w\leq 1\}$ a two-dimensional Brownian motion, where $(\mathscr{G}_w)_{0\leq w\leq 1}$ satisfies the usual condition (see Definition~ 2.25 \cite{KS}). For $0\leq r< s\leq 1,$ denote by $\mathcal{D}[r,s]$ the space of all two-dimensional progressively measurable processes $v=(v_1,v_2)$ with respect to $(\mathscr{G}_w)_{r\leq w\leq s}$ satisfying $
\sup_{r\leq w\leq s}|v_1(w)|\leq 1$ and $\sup_{r\leq w\leq s}|v_2(w)|\leq 1.$ Endow the space $\mathcal{D}[s,t]$ with the norm
\begin{align}\label{eq2}
\|v\|_{r,s}&=\Bigl(\e\int_r^s(v_1(w)^2+v_2(w)^2)dw\Bigr)^{1/2}.
\end{align}
Define a functional
\begin{align*}
\bF_\gamma^{r,s}(\lambda,v,\bx)&=\e\left[\bC_\gamma^{r,s}(\lambda,v,\bx)-\bL_\gamma^{r,s}(v)\right]
\end{align*}
for  $(\lambda,v,\bx)\in\mathbb{R}\times\mathcal{D}[r,s]\times\mathbb{R}^2,$ where
\begin{align*}
\bC_\gamma^{r,s}(\lambda,v,\bx)&:=\Psi_\gamma\Bigl(\lambda,s,\bx+\int_r^s\alpha_\gamma(w)T(w)v(w)dw+\int_r^sT(w)^{1/2}d\mathbf{W}(w)\Bigr),\\
\bL_\gamma^{r,s}(v)&:=\frac{1}{2}\int_r^s\alpha_\gamma(w)\left<T(w)v(w),v(w)\right>dw.
\end{align*}
The following is an analogue of Theorem \ref{thm0}.

\begin{theorem}\label{thm2}  For any $\gamma\in\mathcal{U},$
	\begin{align}
	\label{thm2:eq2}
	\Psi_\gamma(\lambda,s,\bx)&=\max\left\{\bF_\gamma^{r,s}(\lambda,v,\bx)\big|v\in \mathcal{D}[r,s]\right\}.
	\end{align}
	The maximum of \eqref{thm2:eq2} is attained by $v_{\gamma}(r)=\triangledown \Psi_\gamma(\lambda,r,\vX_\gamma(r))$, where the two-dimensional stochastic process $(\vX_\gamma(w))_{r\leq w\leq s}$ satisfies
	\begin{align*}
	d\vX_\gamma(w)&=\gamma(w)T(w)\triangledown \Psi_\gamma(\lambda,w,\vX_\gamma(w))dw+T(w)^{1/2}d\mathbf{W}(w),\\
	\vX_\gamma(r)&=\bx.
	\end{align*}
\end{theorem}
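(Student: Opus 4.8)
The statement to be proved is Theorem~\ref{thm2}, the stochastic optimal control (Hopf--Cole type) representation for the two-dimensional Parisi PDE solution $\Psi_\gamma$. The plan is to mirror the proof of Theorem~\ref{thm0} (the one-dimensional case from \cite{AC16}), adapting it to the two-dimensional setting with the matrix diffusion coefficient $T(w)$.

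\textbf{Step 1: Smooth case via It\^o's formula.} First I would establish the identity for $\gamma\in\mathcal{U}_c$ (equivalently $\alpha_\gamma=\gamma$ continuous, so that $\Psi_\gamma$ is a classical solution with continuous spatial derivatives up to the needed order by the two-dimensional analogue of Proposition~\ref{pde:prop1}). Fix $\lambda$, $0\le r\le s\le 1$, $\bx\in\mathbb{R}^2$ and an arbitrary control $v\in\mathcal{D}[r,s]$. Consider the controlled diffusion $\vY(w)=\bx+\int_r^w\gamma(\tau)T(\tau)v(\tau)d\tau+\int_r^w T(\tau)^{1/2}d\mathbf{W}(\tau)$ and apply It\^o's formula to $\Psi_\gamma(\lambda,w,\vY(w))$ on $[r,s]$. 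Using the PDE \eqref{sec4:eq1} (with $\alpha=\gamma$) to cancel the $\partial_s\Psi_\gamma$ term against the Hessian term $\frac12\langle T,\triangledown^2\Psi_\gamma\rangle$, the drift contribution collapses to
\[
\gamma(w)\langle T(w)\triangledown\Psi_\gamma,v(w)\rangle-\tfrac12\gamma(w)\langle T(w)\triangledown\Psi_\gamma,\triangledown\Psi_\gamma\rangle,
\]
and by the elementary inequality $\langle Ta,b\rangle-\tfrac12\langle Ta,a\rangle\le\tfrac12\langle Tb,b\rangle$ (valid since $T(w)$ is symmetric positive semidefinite, which one checks from \eqref{eq-3} and its off-diagonal counterpart using $|t|\le 1$), this is bounded above by $\tfrac12\gamma(w)\langle T(w)v(w),v(w)\rangle$. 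Taking expectations (the stochastic integral is a martingale because $\triangledown\Psi_\gamma$ is bounded on $[0,s_1]\times\mathbb{R}^2$ for $s_1<1$) yields $\Psi_\gamma(\lambda,r,\bx)\ge \bF_\gamma^{r,s}(\lambda,v,\bx)$ for every admissible $v$. Equality in the pointwise inequality forces $v(w)=\triangledown\Psi_\gamma(\lambda,w,\vY(w))$, which leads to the closed SDE for $\vX_\gamma$; existence and uniqueness of its strong solution follows from the Lipschitz-in-space bounds on $\triangledown\Psi_\gamma$ on $[0,s_1]\times\mathbb{R}^2$, and plugging $v_\gamma$ back in gives equality, establishing \eqref{thm2:eq2} in the smooth case. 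One subtlety I would address is that the boundary condition $g(\lambda,\bx)$ in \eqref{gf} is only Lipschitz, not $C^1$; but this is handled exactly as in the one-dimensional case --- the solution is smooth for $s<1$ by the regularizing effect of the (uniformly) parabolic equation, and one passes to the limit $s\uparrow 1$ at the very end or simply works on $[r,s]$ with $s<1$ and then lets $s\to 1$.

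\textbf{Step 2: General $\gamma\in\mathcal{U}$ by approximation.} For arbitrary $\gamma\in\mathcal{U}$ --- in particular allowing $\gamma(1-)=\infty$ and atoms --- I would approximate $\gamma$ by a sequence $(\gamma_n)\subset\mathcal{U}_c$ converging weakly to $\gamma$, as supplied by the two-dimensional analogue of Proposition~\ref{pde:prop1}$(iii)$, with locally uniform convergence of $\Psi_{\gamma_n}$ and its spatial derivatives on $[0,s_1)\times[-M,M]^2$. The representation \eqref{thm2:eq2} holds for each $\gamma_n$ by Step~1. On the right side, $\bF_{\gamma_n}^{r,s}(\lambda,v,\bx)\to\bF_\gamma^{r,s}(\lambda,v,\bx)$ for each fixed $v$ (dominated convergence, using the Lipschitz bound \eqref{lip}-type estimate for $|\Psi_{\gamma_n}-\Psi_\gamma|$ together with $L^1$ convergence of $\gamma_n$ against $\xi''$, and the boundedness of $v$); and one checks the supremum over $v$ passes to the limit, so $\Psi_\gamma(\lambda,r,\bx)=\sup_v\bF_\gamma^{r,s}(\lambda,v,\bx)$. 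That the supremum is attained, and attained by $v_\gamma=\triangledown\Psi_\gamma(\lambda,\cdot,\vX_\gamma(\cdot))$, follows by showing $v_{\gamma_n}\to v_\gamma$ in $\|\cdot\|_{r,s}$ via stability of the SDE solutions $\vX_{\gamma_n}\to\vX_\gamma$ (Gronwall, using the uniform Lipschitz bounds on $\triangledown\Psi_{\gamma_n}$) together with the locally uniform convergence of the gradients. This is the same approximation scheme used for \cite[Theorem 3]{AC14}, which the excerpt already invokes for Theorem~\ref{thm0}; the two-dimensionality and the non-diagonal $T$ on $[0,|q|)$ do not alter it in any essential way.

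\textbf{Main obstacle.} The genuinely new ingredient compared to \cite{AC16} is the matrix diffusion: one must verify that the quadratic-form inequality $\langle Ta,b\rangle-\tfrac12\langle Ta,a\rangle\le\tfrac12\langle Tb,b\rangle$ holds --- i.e.\ that $T(w)$ is positive semidefinite on all of $[0,1]$ --- and that $T(w)^{1/2}$ is well-defined and the map $w\mapsto T(w)^{1/2}$ is measurable and bounded (it has a jump at $w=|q|$, which is harmless). Positive semidefiniteness of the $2\times2$ block $\begin{psmallmatrix}\xi''&\iota t\xi''\\\iota t\xi''&\xi''\end{psmallmatrix}$ amounts to $\xi''(s)\ge0$ and $\xi''(s)^2(1-t^2)\ge0$, both clear since $\xi''\ge0$ and $t\in(0,1)$. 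The other potential difficulty --- the lack of $C^1$ regularity of the boundary datum $g(\lambda,\cdot)$ and the possibility $\gamma(1-)=\infty$ --- is, as noted, exactly the difficulty already resolved for the one-dimensional Parisi PDE by Proposition~\ref{pde:prop1}, and its two-dimensional counterpart (whose proof the paper relegates to the appendix) furnishes precisely the interior derivative bounds and approximation statements needed. Hence, modulo invoking these analogues, the proof is a routine transcription, which is why I expect the write-up to simply say that the argument follows \cite{AC16} and \cite[Theorem 3]{AC14} with the obvious modifications.
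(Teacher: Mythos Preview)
Your proposal is correct and follows precisely the approach the paper indicates: the paper does not give a self-contained proof of Theorem~\ref{thm2} but simply states that ``the proof of Theorem~\ref{thm2} is a standard computation of It\^o's formula'' and refers to \cite[Theorem~2]{AC16} and \cite[Theorem~5]{C14}. Your Step~1 (verification bound plus equality for the feedback control via It\^o and the positive-semidefiniteness inequality $\langle Ta,b\rangle-\tfrac12\langle Ta,a\rangle\le\tfrac12\langle Tb,b\rangle$) and Step~2 (approximation through the two-dimensional analogue of Proposition~\ref{pde:prop1}) are exactly the content of those references, adapted to the matrix coefficient $T$; the obstacles you flag (PSD of $T$, Lipschitz-only boundary datum, $\gamma(1-)=\infty$) are the right ones and your handling of them matches what the paper takes for granted.
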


The proof of Theorem \ref{thm2} is a standard computation of It\^{o}'s formula. We invite the readers to check either \cite[Theorem 2]{AC16} or \cite[Theorem 5]{C14}. The following lemma listed some key properties of $\Psi_\gamma$ when $\lambda=0.$ It will be used in the proof of Proposition \ref{sec4:prop1}$(i).$

\begin{lemma}
	\label{sec4.3:lem1}
	Assume $|q|<1.$ Let $\gamma\in\mathcal{U}$. For any $r\in [|q|,1)$ and $\bx\in\mathbb{R}^2$,
	\begin{align}
	\begin{split}\label{sec4.3:lem1:eq1}
	\Psi_\gamma(0,r,\bx)&=\Phi_{\gamma}(r,x_1)+\Phi_{\gamma}(r,x_2),
	\end{split}
	\end{align}
	and $\Psi_\gamma(\lambda,r,\bx)$ is partially differentiable in $\lambda$ in the classical sense with
	\begin{align}
	\begin{split}\label{sec4.3:lem1:eq2}
	\partial_\lambda \Psi_\gamma(0,r,\bx)&=\partial_x\Phi_{\gamma}(r,x_1)\partial_x\Phi_{\gamma}(r,x_2).
	\end{split}
	\end{align}
\end{lemma}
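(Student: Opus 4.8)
The lemma rests on the observation that for $s\in[|q|,1]$ the matrix $T(s)$ of \eqref{eq-3} equals $\xi''(s)$ times the identity, so that on $[|q|,1)$ the equation \eqref{sec4:eq1} for $\Psi_\gamma$ decouples into two copies of the one-dimensional Parisi equation \eqref{pde}, and that the boundary datum $g$ of \eqref{gf} decouples at $\lambda=0$, since $g(0,\bx)=\max(|x_1+x_2|,|x_1-x_2|)=|x_1|+|x_2|$. For \eqref{sec4.3:lem1:eq1} the plan is a uniqueness argument: by definition of the weak solution, the restriction of $\Psi_\gamma(0,\cdot,\cdot)$ to $[|q|,1)$ is a weak solution of $\partial_s\Psi=-\tfrac{\xi''(s)}2(\Delta\Psi+\gamma(s)|\triangledown\Psi|^2)$ with terminal value $|x_1|+|x_2|$ at $s=1$, and so is $\tilde\Psi(s,\bx):=\Phi_\gamma(s,x_1)+\Phi_\gamma(s,x_2)$, because each summand solves \eqref{pde} with terminal value $|x_j|$; the two-dimensional version of the uniqueness in Proposition~\ref{pde:prop1}$(i)$, applied on the sub-interval $[|q|,1)$, forces $\Psi_\gamma(0,s,\bx)=\tilde\Psi(s,\bx)$, and differentiating in $\bx$ gives $\triangledown\Psi_\gamma(0,r,\bx)=(\partial_x\Phi_\gamma(r,x_1),\partial_x\Phi_\gamma(r,x_2))$ for $r\in[|q|,1)$.

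For \eqref{sec4.3:lem1:eq2} I would invoke the stochastic control representation of Theorem~\ref{thm2} with target time $s=1$ on $[r,1]\subset[|q|,1]$. Since $T$ is diagonal there, the controlled process is $\vX^v=(X^1,X^2)$ with $dX^j(w)=\gamma(w)\xi''(w)v_j(w)\,dw+\xi''(w)^{1/2}\,dW_j(w)$ driven by the two independent components of $\mathbf W$, and the running cost $\bL_\gamma^{r,1}(v)$ splits over $j$, so that $\Psi_\gamma(\lambda,r,\bx)=\max_{v\in\mathcal D[r,1]}\e[\,g(\lambda,\vX^v(1))-\bL_\gamma^{r,1}(v)\,]$. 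A direct computation shows $g(\lambda,\by)=\max(|y_1|,|y_2|)+|\lambda+\mathrm{sign}(y_1y_2)\min(|y_1|,|y_2|)|$; in particular $\lambda\mapsto g(\lambda,\by)$ is convex and $1$-Lipschitz, hence so is $\lambda\mapsto\Psi_\gamma(\lambda,r,\bx)$, and $\partial_\lambda g(0,\by)=\mathrm{sign}(y_1)\mathrm{sign}(y_2)$ whenever $y_1y_2\neq0$. The standard envelope argument then identifies $\partial_\lambda\Psi_\gamma(0,r,\bx)=\e\,\partial_\lambda g(0,\vX^{v^0}(1))$, where $v^0$ is the maximizer at $\lambda=0$; by \eqref{sec4.3:lem1:eq1} one has $v^0(w)=(\partial_x\Phi_\gamma(w,X^1(w)),\partial_x\Phi_\gamma(w,X^2(w)))$, so each $X^j$ solves exactly the Parisi SDE of Theorem~\ref{thm0} started at $x_j$ and driven by $W_j$. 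Consequently $X^1(1)$ and $X^2(1)$ are independent, their laws are atomless (Girsanov, exactly as in the proof of Lemma~\ref{lem3}), whence $\e\,\partial_\lambda g(0,\vX^{v^0}(1))=\e\,\mathrm{sign}(X^1(1))\cdot\e\,\mathrm{sign}(X^2(1))$, and the martingale identity \eqref{eq10} together with $\partial_x\Phi_\gamma(b,\cdot)\to\mathrm{sign}(\cdot)$ as $b\uparrow1$ gives $\e\,\mathrm{sign}(X^j(1))=\partial_x\Phi_\gamma(r,x_j)$. Combining the last three displays yields \eqref{sec4.3:lem1:eq2}.

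The main obstacle is making the envelope step rigorous: showing that both one-sided derivatives of $\lambda\mapsto\Psi_\gamma(\lambda,r,\bx)$ at $0$ equal $\e\,\partial_\lambda g(0,\vX^{v^0}(1))$, rather than merely sandwiching it. This requires uniqueness of the maximizer in the representation (the two-dimensional analogue of Lemma~\ref{prop1}, valid when $\gamma(r)>0$) together with continuity at $\lambda=0$ of the maximizer map $\lambda\mapsto v^\lambda$ and of $\lambda\mapsto\e\,\partial_\lambda^+g(\lambda,\vX^{v^\lambda}(1))$, the latter again reducing to atomlessness of the controlled endpoints via Girsanov. The degenerate case $\gamma(r)=0$ — in particular $\gamma\equiv0$, where $\Psi_\gamma$ and $\Phi_\gamma$ are explicit Gaussian convolutions and one may differentiate under the expectation directly — is handled separately, for example by replacing $\gamma$ with $\gamma+\eps$ and letting $\eps\downarrow0$ using the Lipschitz bound of Proposition~\ref{pde:prop1}$(i)$. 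Since all of this mirrors the positive-temperature analysis in Chen~\cite{C14}, in the paper I would present it briefly with that reference.
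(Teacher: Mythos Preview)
Your proposal is correct in outline and reaches the same conclusions, but it differs from the paper's argument in two places worth noting.

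For \eqref{sec4.3:lem1:eq1} you invoke two-dimensional PDE uniqueness on $[|q|,1)$, whereas the paper works entirely through the variational representation of Theorem~\ref{thm2}: since $T$ is diagonal on $[|q|,1]$ and $g(0,\by)=|y_1|+|y_2|$, the functional $\bF_\gamma^{r,1}(0,v,\bx)$ splits over $j=1,2$ and the maximum is the sum of the two one-dimensional maxima, which are $\Phi_\gamma(r,x_j)$ by Theorem~\ref{thm0}. Both routes are valid; the paper's avoids appealing to the two-dimensional uniqueness statement and stays within the tools already set up.

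For the differentiability in $\lambda$ your envelope sketch is close to the paper's, but the paper uses the abstract Lemma~\ref{lem-1} rather than uniqueness of the maximizer. The point is that Theorem~\ref{thm2} already furnishes an explicit maximizer $v_\gamma^{r,\lambda,\bx}$ (via the SDE), and a Gronwall estimate on the SDE shows $\lambda\mapsto v_\gamma^{r,\lambda,\bx}$ is continuous; together with continuity of $(\lambda,v)\mapsto\partial_\lambda\bF_\gamma^{r,1}(\lambda,v,\bx)$ (checked via atomlessness of the controlled endpoint, as you indicate), Lemma~\ref{lem-1} gives right differentiability at every $\lambda$, and continuous right derivative implies full differentiability. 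This bypasses the two-dimensional analogue of Lemma~\ref{prop1} and hence the restriction $\gamma(r)>0$, so the separate treatment of $\gamma(r)=0$ you propose is unnecessary. Your use of the martingale identity \eqref{eq10} to identify $\e\,\mathrm{sign}(X^j(1))$ with $\partial_x\Phi_\gamma(r,x_j)$ requires a limit $b\uparrow 1$ that is not covered by \eqref{eq10} as stated; the paper instead obtains $\partial_x\Phi_\gamma(r,x_j)=\e\,\mathrm{sign}(X^j(1))$ directly by the same envelope argument applied to the one-dimensional representation.
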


The proof of Lemma \ref{sec4.3:lem1} utilizes the variational representations for $\Phi_\gamma(r,x_1),$ $\Phi_{\gamma}(r,x_2)$, and $\Psi_\gamma(0,r,\bx)$ introduced above. While the validity of \eqref{sec4.3:lem1:eq1} follows directly from these representations, the proof of the partial differentiability of $\Psi_\gamma(\lambda,r,\bx)$ in $\lambda$ is relatively difficult and depends on the following technical lemma concerning about the right differentiability of some functions defined in terms of maximization problems.

\begin{lemma}[{\cite[Lemma 2]{C14}}]
	\label{lem-1}
	Let $K$ be a metric space and $I$ be an interval with right open edge. Let $f$ be a real-valued function on $K\times I$ and $f_0(y)=\sup_{a\in K}f(a,y).$
	Suppose that there exists a $K$-valued continuous function $a(y)$ on $I$ su	ch that $f_0(y)=f(a(y),y)$ and $\partial_yf$ is continuous on $K\times I$, then $f_0$ is right-differentiable with derivative $\partial_yf(a(y),y)$ for all $y\in I$.
\end{lemma}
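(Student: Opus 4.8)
\textbf{Proof proposal for Lemma \ref{lem-1}.} The statement is a soft, general fact about maximization problems with a continuous selector, so I would not attempt anything model-specific; everything follows from elementary real analysis. The plan is to fix $y\in I$ and compute the right-hand Dini derivative of $f_0$ at $y$ directly from the definition, showing it equals $\partial_y f(a(y),y)$.

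First I would establish the lower bound on the right derivative. For $\delta>0$ small enough that $y+\delta\in I$, the suboptimal choice $a(y)$ in the supremum defining $f_0(y+\delta)$ gives
\begin{align*}
\frac{f_0(y+\delta)-f_0(y)}{\delta}\ \geq\ \frac{f(a(y),y+\delta)-f(a(y),y)}{\delta}\ \longrightarrow\ \partial_y f(a(y),y)
\end{align*}
as $\delta\downarrow 0$, where the limit uses only differentiability of $f(a(y),\cdot)$ at $y$ (a consequence of $\partial_y f$ existing and being continuous on $K\times I$). Hence $\liminf_{\delta\downarrow0}\frac{f_0(y+\delta)-f_0(y)}{\delta}\geq\partial_y f(a(y),y)$.

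Next I would establish the matching upper bound, which is where the continuity of the selector $a(\cdot)$ is used. For each small $\delta>0$ the attainment hypothesis gives $f_0(y+\delta)=f(a(y+\delta),y+\delta)$, while $f_0(y)\geq f(a(y+\delta),y)$. Therefore
\begin{align*}
\frac{f_0(y+\delta)-f_0(y)}{\delta}\ \leq\ \frac{f(a(y+\delta),y+\delta)-f(a(y+\delta),y)}{\delta}\ =\ \partial_y f\bigl(a(y+\delta),y+\delta_\ast\bigr)
\end{align*}
for some $\delta_\ast=\delta_\ast(\delta)\in(0,\delta)$ by the mean value theorem applied to $t\mapsto f(a(y+\delta),t)$ on $[y,y+\delta]$. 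As $\delta\downarrow 0$ we have $a(y+\delta)\to a(y)$ in $K$ by continuity of the selector and $y+\delta_\ast\to y$, so joint continuity of $\partial_y f$ on $K\times I$ forces the right-hand side to converge to $\partial_y f(a(y),y)$. Thus $\limsup_{\delta\downarrow0}\frac{f_0(y+\delta)-f_0(y)}{\delta}\leq\partial_y f(a(y),y)$. Combining the two bounds yields that the right derivative exists and equals $\partial_y f(a(y),y)$, as claimed.

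\textbf{Main obstacle.} There is no serious obstacle; the only point requiring care is that the selector $a(\cdot)$ need not be optimal at $y+\delta_\ast$ for intermediate times, which is exactly why one compares $f_0(y+\delta)$ against the suboptimal value $f(a(y+\delta),y)$ rather than $f(a(y),y)$ in the upper-bound step, and why joint (not merely separate) continuity of $\partial_y f$ is needed to pass to the limit once $a(y+\delta)$ drifts. One should also note the hypothesis guarantees finiteness of $f_0$ near $y$ (it is attained), so all the difference quotients above are well defined. Since the paper attributes this lemma to \cite[Lemma 2]{C14}, I would simply cite it rather than reproduce this argument in full.
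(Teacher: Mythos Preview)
Your argument is correct and is the standard envelope-theorem proof one would expect. The paper itself does not prove this lemma at all; it simply quotes the statement and cites \cite[Lemma 2]{C14}, exactly as you anticipated in your closing remark.
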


\begin{proof}[\bf Proof of Lemma \ref{sec4.3:lem1}]
	Assume that $|q|<1$ and $\gamma\in \mathcal{U}$ is fixed. Let $r\in[|q|,1]$ and $s=1$ be fixed. Recall the function $g$ from \eqref{gf}. From Theorem \ref{thm2} associated to the pair $(r,s)$, 
	\begin{align}\label{sec4.3:lem1:proof:eq1}
		\Psi_\gamma(\lambda,r,\bx)&=\max\left\{\bF_\gamma^{r,1}(\lambda,v,\bx)\big|v\in \mathcal{D}[r,1]\right\}.
		\end{align}
	From \eqref{eq-3} and \eqref{eq1},
		\begin{align*}
		\Psi_\gamma(\lambda,r,\bx)&=\max_{v\in \mathcal{D}[r,1]} \e\Bigl[g\Bigl(\lambda,\bx+\int_r^1\xi''\gamma  vdw+\int_r^1\sqrt{\xi''}d\mathbf{W}\Bigr)-\frac{1}{2}\int_r^1\xi''\gamma\la v,v\ra dw\Bigr].
	\end{align*}
    Since $g(0,\by)=|y_1|+|y_2|$, the foregoing equation becomes
	\begin{align*}
	\Psi_\gamma(\lambda,r,\bx)&=\max_{v=(v_1,v_2)\in \mathcal{D}[r,1]} \Bigl\{\e\Bigl[\Bigl|x_1+\int_r^1\xi''\gamma  v_1dw+\int_r^1\sqrt{\xi''}d{W}_1\Bigr|-\frac{1}{2}\int_r^1\xi''\gamma v_1^2dw\Bigr]\\
	&\qquad\qquad\qquad+\e\Bigl[\Bigl|x_2+\int_r^1\xi''\gamma  v_2dw+\int_r^1\sqrt{\xi''}d{W}_2\Bigr|-\frac{1}{2}\int_r^1\xi''\gamma v_2^2dw\Bigr]\Bigr\}\\
	&=\Phi_\gamma(r,x_1)+\Phi_\gamma(r,x_2)
	\end{align*}
	by applying Theorem \ref{thm0} with the same pair $(r,s).$ This gives \eqref{sec4.3:lem1:eq1}.
	
	To establish the partial differentiability of $\Psi_\gamma$ in $\lambda$, observe that $g$ is Lipschitz in $\lambda$ and it can be written as
	\begin{align*}
	g(\lambda,\by)&=\max\bigl(|y_1+y_2|+\lambda,|y_1-y_2|-\lambda\bigr).
	\end{align*}
	Note that
	\begin{align*}
	\partial_\lambda g(\lambda,\by)&=\mbox{sign}\bigl(2\lambda+|y_1+y_2|-|y_1-y_2|\bigr)=2I_{\{2\lambda+|y_1+y_2|-|y_1-y_2|>0\}}-1\,\,a.e.,
	\end{align*}
	where $\mbox{sign}(a):=1$ if $a\geq 0$ and $:=-1$ if $a<0$ and $I_{\{2\lambda+|y_1+y_2|-|y_1-y_2|>0\}}$ is the indication function on $\{2\lambda+|y_1+y_2|-|y_1-y_2|>0\}$. By the dominated convergence theorem, for any $v\in \mathcal{D}[r,1],$
		\begin{align*}
		\partial_\lambda \mathcal{F}_\gamma^{r,1}(\lambda,v,\bx)&=2\p\Bigl(2\lambda+\bigl|Y_{\gamma,1}^v+Y_{\gamma,2}^v\bigr|>\bigl|Y_{\gamma,1}^v-Y_{\gamma,2}^v\bigr|\Bigr)-1,
		\end{align*}
		where $\bY_\gamma^v=(Y_{\gamma,1}^v,Y_{\gamma,2}^v)$ is defined by
		$$
		\bY_\gamma^v=\bx+\int_r^1\gamma\xi''vdw+\int_r^1\sqrt{\xi''}d\bW.
		$$
	Here, observe that the equality 
	$$
	2\lambda+\bigl|Y_{\gamma,1}^v+Y_{\gamma,2}^v\bigr|=\bigl|Y_{\gamma,1}^v-Y_{\gamma,2}^v\bigr|
	$$
	implies that $Y_{\gamma,1}^v$ or $Y_{\gamma,2}^v$ must be equal to $\lambda$ or $-\lambda,$ which happens with zero probability. Consequently,
	\begin{align*}
	\p\Bigl(2\lambda+\bigl|Y_{\gamma,1}^v+Y_{\gamma,2}^v\bigr|=\bigl|Y_{\gamma,1}^v-Y_{\gamma,2}^v\bigr|\Bigr)=0.
	\end{align*}
	This guarantees that $\partial_\lambda \mathcal{F}_\gamma^{r,1}(\lambda,v,\bx)$ is continuous in $(\lambda,v)$.
    Next, we recall that from Theorem \ref{thm2}, the optimizer of \eqref{sec4.3:lem1:proof:eq1} is given by $v_\gamma^{r,\lambda,\bx}(w)=\triangledown\Psi_\gamma\bigl(\lambda,w,\bX_\gamma^{r,\lambda,\bx}(w)\bigr)$, where $\bX_\gamma^{r,\lambda,\bx}$ satisfies
	\begin{align*}
	d\bX_\gamma^{r,\lambda,\bx}(w)&=\gamma(w)\xi''(w)\triangledown \Psi_\gamma(\lambda,w,\bX_\gamma^{r,\lambda,\bx}(w))dw+\sqrt{\xi''(w)}d\mathbf{W}(w),\,\,r\leq w\leq 1,\\
	\bX_\gamma^{r,\lambda,\bx}(r)&=\bx.
	\end{align*}
    Since $g$ is uniformly Lipschitz, we can apply the same argument as Proposition \ref{pde:prop1} to obtain that $\triangledown \Psi_\gamma(\lambda,s,\bx)$ is Lipschitz in $(\lambda,\bx)\in \mathbb{R}^3$ uniformly over all $s\in [0,s_0]$ for any $0<s_0<1.$ As a result, an application of the Gronwall inequality yields (see, e.g., the argument of \cite[Theorem 3]{AC14}) that for any $s_0\in(0,1)$, there exists $K_0$ such that with probability $1,$
	\begin{align*}
	\sup_{w\in [0,s_0]}\bigl|\bX_{\gamma}^{r,\lambda,\bx}(w)-\bX_{\gamma}^{r,\lambda',\bx}(w)\bigr|\leq K_{0}|\lambda-\lambda'|,\,\,\forall \lambda,\lambda'\in \mathbb{R}.
	\end{align*}
    Therefore, the process $\lambda\mapsto v_\gamma^{r,\lambda,\bx}$ is continuous, i.e., $\lim_{\lambda'\rightarrow \lambda}\|v_\gamma^{r,\lambda',\bx}-v_\gamma^{r,\lambda,\bx}\|_{r,1}=0,$ where $\|\cdot\|_{r,1}$ is defined through \eqref{eq2}. This together with the continuity of $\partial_\lambda \mathcal{F}_\gamma^{r,1}(\lambda,v,\bx)$ in $(\lambda,v)$ established above implies that $\Psi_\gamma(\lambda,r,\bx)$ is right partially differentiable at all $\lambda\in \mathbb{R}$ by Lemma \ref{lem-1}. Furthermore, this right derivative is equal to  $\partial_\lambda \mathcal{F}_\gamma^{r,1}(\lambda,v_\gamma^{r,\lambda,\bx},\bx)$ and is continuous in $\lambda.$ It is a well-known result (see, e.g., \cite{BRU}) that if a function has a continuous right derivative on an open interval $U$, then it is differentiable on $U$. From these, we obtain the partial differentiability of $\Psi_\gamma(\lambda,r,\bx)$ in $\lambda.$
    
    To see how \eqref{sec4.3:lem1:eq2} is obtained, we note that when $\lambda=0,$ $g(0,\by)=|y_1|+|y_2|$, which implies $\triangledown \Psi_\gamma(0,w,\by)=(\partial_x\Phi_\gamma(w,y_1),\partial_x\Phi_\gamma(w,y_2)).$ Thus, the stochastic processes $X_{\gamma,1}^{r,0,\bx}$ and $X_{\gamma,2}^{r,0,\bx}$ are exactly the minimizers of the variational representations \eqref{MaxFormula} for $\Phi_\gamma(r,x_1)$ and $\Phi_\gamma(r,x_2)$ with respect to independent standard Brownian motions $W_1$ and $W_2$, respectively. Thus,
    \begin{align*}
    \partial_\lambda\Psi_\lambda(0,r,\bx)&=\e \mbox{sign}\bigl(X_{\gamma,1}^{r,0,\bx}\bigr)\cdot \e\mbox{sign}\bigl(X_{\gamma,2}^{r,0,\bx}\bigr)\\
    &=\partial_x\Phi_\gamma(r,x_1)\partial_x\Phi_\gamma(r,x_2),
    \end{align*}
    where the last equality is obtained by noting that $\partial_x\Phi_\gamma(r,x_j)=\e \mbox{sign}\bigl(X_{\gamma,j}^{r,0,\bx}\bigr)$, which can be derived by the above argument. This completes our proof.
\end{proof}

\begin{proof}[\bf Proof of Proposition \ref{sec4:prop1}$(i)$] If $h=0,$ from Subsection \ref{sec4.2} and Proposition \ref{prop4.2}, we see that $q_{h} =0,$ so $[-q_{h} ,q_{h} ]\setminus\{q_{t,h}\}=\emptyset$ and $(i)$ is vacuously valid. Assume now $h\neq 0$ and $q\in[-q_{h} ,q_{h} ]\setminus\{q_{t,h}\}.$ Recall the Gaussian random variables  $\chi,\chi_1(s),\chi_2(s)$ and the function $\psi_t$ introduced in Subsection~\ref{sec4.2}. Note that $\gamma_{h} =0$ on $[0,q_{h} )$ and $q_h<1.$ From \eqref{sec4.3:lem1:eq1},
	\begin{align*}
	\Psi_{\gamma_{h} }(0,0,h,h)&=\e\Psi_{\gamma_{h} }(0,q_{h} ,h+\chi_1(q),h+\chi_2(q))\\
	&=\e \Phi_{\gamma_{h} }(q_{h} ,h+\chi_1(q))+\e\Phi_{\gamma_{h} }(q_{h} ,h+\chi_2(q))\\
	&=2\e\Phi_{\gamma_{h} }(q_{h} ,h+\chi)\\
	&=2\Phi_{\gamma_{h} }(0,h).
	\end{align*}
    This implies $\Lambda(0,\gamma_h,q)=2\mathcal{P}_h(\gamma_h)=2M(h)$ 
	because $$\int_0^{|q|}\gamma_{h}(s)s\xi''(s)ds=0$$ in $\Lambda$ (see \eqref{add:eq3}) for $|q|\leq q_h.$ 
	On the other hand, from \eqref{sec4.3:lem1:eq2},
	\begin{align*}
	\partial_\lambda\Psi_{\gamma_{h} }(0,0,h,h)&=\e \partial_\lambda\Psi_{\gamma_{h} }(0,q_{h} ,h+\chi_1(q),h+\chi_2(q))\\
	&=\e\partial_x\Phi_{\gamma_{h} }(q_{h} ,h+\chi_1(q))\partial_x\Phi_{\gamma_{h} }(q_{h} ,h+\chi_2(q))\\
	&=\psi_t(q).
	\end{align*}
	Therefore,
	\begin{align*}
	\partial_{\lambda}\Lambda(0,\gamma_{h} ,q)&=\psi_t(q)-q.
	\end{align*}
	Since $\psi_t$ has a unique fixed point $q_{t,h}$ by Proposition \ref{prop4.2}, depending on the sign of $\psi_t(q)-q$, we may vary $\lambda$ slightly to obtain $\Lambda(q)\leq\Lambda(\lambda,\gamma_{h} ,q)<\Lambda(0,\gamma_{h} ,q)=2M(h).$
\end{proof}

To show Proposition \ref{sec4:prop1}$(ii)$, we need the following crucial lemma, which allows us to quantify the difference between the one-dimensional and two-dimensional Parisi PDEs  in an elementary way:
\begin{proposition}\label{sec4.3:prop1}
	Assume that $|q|>q_{h} .$ Define $\gamma_t\in\mathcal{U}$ by $$
	\gamma_t=\left\{
	\begin{array}{ll}
	\frac{\gamma_{h}}{1+t},&\mbox{on $[0,|q|)$},\\
	\gamma_h,&\mbox{on $[|q|,1)$}.
	\end{array}
	\right.
	$$ The following two statements hold:
	\begin{enumerate}
		\item[$(i)$] If $v_{\gamma_t}=(v_1,v_2)$ is the maximizer to the variational problem \eqref{thm2:eq2} for $\Psi_{\gamma_t}(0,0,\bx)$ using $(r,s)=(0,|q|),$ then
		\begin{align}
		\begin{split}
		\label{sec4.3:prop:eq1}
		\Psi_{\gamma_t}(0,0,\bx)&\leq \Phi_{\gamma_{h} }(0,x_1)+\Phi_{\gamma_{h} }(0,x_2)
		-\frac{t(1-t)}{2(1+t)^2}\int_{0}^{|q|}\gamma_{h} \xi''\e\left(v_1-\iota v_2\right)^2dw.
		\end{split}
		\end{align}
		
		\item[$(ii)$] Define
		\begin{align}
		\begin{split}\label{sec4.3:prop:eq2}
		\left(u_1(w),u_2(w)\right)&=\frac{1}{1+t}T(w)v_{\gamma_t}(w),
		\end{split}\\
		\begin{split}
		\label{sec4.3:prop:eq3}
		(B_1(r),{B}_2(r))&=\frac{1}{\xi''(w)^{1/2}}T(w)^{1/2}\mathbf{W}(w)
		\end{split}
		\end{align}
		for $0\leq r\leq |q|.$ If $$
		\Psi_{\gamma_t}(0,0,\bx)=\Phi_{\gamma_{h} }(0,x_1)+\Phi_{\gamma_{h} }(0,x_2),$$
		then $u_1$ and $u_2$ are the maximizers of the variational problems \eqref{MaxFormula} for $\Phi_{\gamma_{h} }(0,x_1)$ and $\Phi_{\gamma_{h} }(0,x_2)$ using $(r,s)=(0,|q|)$ with respect to the standard Brownian motions $B_1$ and $B_2$. Moreover, on the interval $[q_{h} ,|q|]$,
		\begin{align*}
		u_1(w)&=\partial_x\Phi_{\gamma_{h} }(w,X_{1,\gamma_h}(w)),\\
		u_2(w)&=\partial_x\Phi_{\gamma_{h} }(w,X_{2,\gamma_h}(w)),
		\end{align*}
		where $(X_{1,\gamma_h}(w))_{0\leq w\leq|q|}$ and $(X_{2,\gamma_h}(w))_{0\leq w\leq |q|}$ satisfy
		\begin{align*}
		dX_{1,\gamma_h}(w)&=\gamma_{h} (w)\xi''(w)\partial_x\Phi_{\gamma_{h} }(w,X_{1,\gamma_h}(w))dw+\xi''(w)^{1/2}dB_1(w),\\
		dX_{2,\gamma_h}(w)&=\gamma_{h}(w)\xi''(w)\partial_x\Phi_{\gamma_{h} }(w,X_{2,\gamma_h}(w))dw+\xi''(w)^{1/2}dB_2(w)
		\end{align*}
		with initial condition $X_{\gamma_h}(0)=x_1$ and $X_{\gamma_h}(0)=x_2.$
	\end{enumerate}
\end{proposition}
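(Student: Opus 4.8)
The plan is to compare the two-dimensional control problem representing $\Psi_{\gamma_t}(0,0,\bx)$ with two decoupled copies of the one-dimensional problem representing $\Phi_{\gamma_h}$, exploiting that on $[|q|,1)$ the matrix $T$ is diagonal and $\gamma_t=\gamma_h$ there. First I would write the representation of Theorem~\ref{thm2} for $\Psi_{\gamma_t}(0,0,\bx)$ on the interval $(r,s)=(0,|q|)$. Since $\gamma_t=\gamma_h$ on $[|q|,1)$, the solution $\Phi_{\gamma_t}$ coincides with $\Phi_{\gamma_h}$ on $[|q|,1]$ (as $\Phi_\gamma(s,\cdot)$ depends only on $\gamma|_{[s,1)}$), and Lemma~\ref{sec4.3:lem1} applied with $\gamma_h$ at time $|q|$ (here $|q|<1$) gives $\Psi_{\gamma_t}(0,|q|,\by)=\Phi_{\gamma_h}(|q|,y_1)+\Phi_{\gamma_h}(|q|,y_2)$. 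Substituting this terminal value into \eqref{thm2:eq2} turns the problem into the maximization, over $v=(v_1,v_2)\in\mathcal D[0,|q|]$, of
\[
\e\Bigl[\Phi_{\gamma_h}(|q|,Y_1)+\Phi_{\gamma_h}(|q|,Y_2)-\tfrac12\int_0^{|q|}\gamma_t(w)\la T(w)v(w),v(w)\ra\,dw\Bigr],
\]
where $(Y_1,Y_2)$ is the controlled diffusion started at $\bx$.

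On $[0,|q|)$ one has $\gamma_t=\gamma_h/(1+t)$ and $T(w)=\xi''(w)M$, where $M$ is the $2\times2$ symmetric matrix with $1$ on the diagonal and $\iota t$ off the diagonal. I would then introduce $u_1:=\tfrac1{1+t}(v_1+\iota t v_2)$ and $u_2:=\tfrac1{1+t}(\iota t v_1+v_2)$, which satisfy $|u_j|\le1$ (using $|v_j|\le1$, $\iota^2=1$), so $u_1,u_2\in D[0,|q|]$. Setting $\mathbf B:=M^{1/2}\mathbf W$, whose marginals $B_1,B_2$ are standard Brownian motions (this is the pair $(B_1,B_2)$ of \eqref{sec4.3:prop:eq3}), the identity $T(w)^{1/2}=\xi''(w)^{1/2}M^{1/2}$ shows that the $j$-th coordinate of $(Y_1,Y_2)$ equals $x_j+\int_0^{|q|}\gamma_h\xi''u_j\,dw+\int_0^{|q|}(\xi'')^{1/2}\,dB_j$, i.e. the terminal state of the one-dimensional control problem of Theorem~\ref{thm0} for $\Phi_{\gamma_h}(0,x_j)$ driven by $B_j$; consequently $\Phi_{\gamma_h}(|q|,Y_j)=C^{0,|q|}(u_j,x_j)$ in the notation of Theorem~\ref{thm0} with weight $\gamma_h$. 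A direct expansion of the quadratic forms (again $\iota^2=1$) gives the pointwise identity
\[
\gamma_h\xi''(u_1^2+u_2^2)-\gamma_t\xi''\la Mv,v\ra=-\frac{t(1-t)}{(1+t)^2}\,\gamma_h\xi''\,(v_1-\iota v_2)^2,
\]
so, adding and subtracting $\tfrac12\int_0^{|q|}\gamma_h\xi''(u_1^2+u_2^2)\,dw$ in the displayed maximand and taking $v=v_{\gamma_t}$, we obtain
\[
\Psi_{\gamma_t}(0,0,\bx)=F^{0,|q|}(u_1,x_1)+F^{0,|q|}(u_2,x_2)-\frac{t(1-t)}{2(1+t)^2}\int_0^{|q|}\gamma_h\xi''\,\e(v_1-\iota v_2)^2\,dw ,
\]
where $F^{0,|q|}$ is the functional of Theorem~\ref{thm0} with weight $\gamma_h$. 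Since $F^{0,|q|}(u_j,x_j)\le\Phi_{\gamma_h}(0,x_j)$ by Theorem~\ref{thm0}, this gives exactly \eqref{sec4.3:prop:eq1}, which is part~$(i)$.

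For part~$(ii)$, assume $\Psi_{\gamma_t}(0,0,\bx)=\Phi_{\gamma_h}(0,x_1)+\Phi_{\gamma_h}(0,x_2)$. In the last display the correction term is $\le0$ (because $t\in(0,1)$ and $\gamma_h,\xi''\ge0$) and $F^{0,|q|}(u_j,x_j)\le\Phi_{\gamma_h}(0,x_j)$, so equality forces both $\int_0^{|q|}\gamma_h\xi''\,\e(v_1-\iota v_2)^2\,dw=0$ and $F^{0,|q|}(u_j,x_j)=\Phi_{\gamma_h}(0,x_j)$ for $j=1,2$. Using $\xi''>0$ on $(0,1)$ and $\gamma_h>0$ on $(q_h,1)$, the first identity gives $v_1=\iota v_2$, hence $u_1=\iota u_2$, for a.e.\ $w\in[q_h,|q|]$; the second says $u_1,u_2$ attain the maxima in \eqref{MaxFormula} for $\Phi_{\gamma_h}(0,x_1),\Phi_{\gamma_h}(0,x_2)$ with respect to $B_1,B_2$, which is the first claim of $(ii)$. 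For the pointwise formula, I would localize: for each $r'\in(q_h,|q|)$ the dynamic-programming property of \eqref{MaxFormula} (Theorem~\ref{thm0} on $[r',|q|]$) shows that the restriction of $u_j$ to $[r',|q|]$ is optimal for $\Phi_{\gamma_h}(r',X_{j,\gamma_h}(r'))$, and since $\gamma_h(r')>0$, Lemma~\ref{prop1} forces $u_j(w)=\partial_x\Phi_{\gamma_h}(w,X_{j,\gamma_h}(w))$ on $[r',|q|]$; letting $r'\downarrow q_h$ and invoking the continuity of $\partial_x\Phi_{\gamma_h}$ (Proposition~\ref{pde:prop1}$(ii)$) and of $w\mapsto X_{j,\gamma_h}(w)$ extends the identity to $[q_h,|q|]$.

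The matrix computations and the matching of the two variational representations are routine; the step I expect to need the most care is the pointwise identification in $(ii)$. The two-dimensional maximizer $v_{\gamma_t}$ is progressively measurable only with respect to $\mathcal F^{\mathbf W}$, which is strictly larger than the filtration of a single $B_j$, so one has to observe that the bound $F^{0,|q|}(u_j,x_j)\le\Phi_{\gamma_h}(0,x_j)$ still applies because $B_j$ remains a Brownian motion for $\mathcal F^{\mathbf W}$; and, since on $\{\gamma_h=0\}=[0,q_h)$ the control influences neither the cost nor the dynamics, the maximizer of \eqref{MaxFormula} fails to be unique there, which is precisely why the pointwise formula can only be asserted on $[q_h,|q|]$ and must be reached by first localizing to subintervals $[r',|q|]$ on which $\gamma_h$ is strictly positive before applying Lemma~\ref{prop1}.
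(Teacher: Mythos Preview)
Your proposal is correct and follows precisely the strategy of \cite[Proposition~5]{C14}, which is exactly what the paper invokes (it omits the proof and refers to that reference). The key steps---using Lemma~\ref{sec4.3:lem1} to decouple the terminal value, the linear change of control $u=\tfrac{1}{1+t}Mv$ together with $\mathbf{B}=M^{1/2}\mathbf{W}$, the quadratic identity producing the $(v_1-\iota v_2)^2$ defect, and the localization via Lemma~\ref{prop1} on $[r',|q|]$ with $r'>q_h$---are all the right ones, and your flagging of the filtration enlargement (that $B_j$ is an $\mathcal{F}^{\mathbf{W}}$-Brownian motion, so the one-dimensional upper bound of Theorem~\ref{thm0} still applies to controls adapted to $\mathcal{F}^{\mathbf{W}}$) is exactly the care point the argument needs.
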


Theorem \ref{thm2} and Proposition \ref{sec4.3:prop1} are analogous to the results of Theorem 5 and Proposition 5 in \cite{C14}, which were used to control the PDE solution $\Psi_{\alpha,\beta}$ in the GT bound \eqref{GT}. Although the space $\mathcal{U}$ is different from $\mathcal{M}$ in \cite{C14}, the same proofs there carry through in our case.
We do not reproduce the proofs here.

\begin{proof}[\bf Proof of Proposition \ref{sec4:prop1}$(ii)$]
	Let $q\notin[-q_{h} ,q_{h} ].$ Observe that
	\begin{align*}
	\Lambda(q)&\leq\Lambda(0,\gamma_t,q)=\Psi_{\gamma_t}(0,0,h,h)-\int_0^1w\xi''(w)\gamma_{h} (w)dw.
	\end{align*}
	It suffices to show that $\Psi_{\gamma_t}(0,0,h,h)<2\Phi_{\gamma_{h} }(0,h).$ If, on the contrary, the two sides are the same, then Proposition \ref{sec4.3:prop1}$(ii)$ readily implies
	\begin{align*}
	u_1(w)&=\partial_x\Phi_{\gamma_{h} }(w,X_{1,\gamma_h}(w)),\\
	u_2(w)&=\partial_x\Phi_{\gamma_{h} }(w,X_{2,\gamma_h}(w)),
	\end{align*}
	for $w\in [q_{h} ,|q|]$, where $X_{1,\gamma_h}=(X_{1,\gamma_h}(w))_{0\leq w\leq |q|}$ and $X_{2,\gamma_h}=(X_{2,\gamma_h}(w))_{0\leq w\leq |q|}$ satisfy
	\begin{align}
	\begin{split}\label{sde}
	X_{1,\gamma_h}(w)&=h+\int_0^w\gamma_{h} (a)\xi''(a)\partial_x\Phi_{\gamma_{h} }(a,X_{1,\gamma_h}(a))da+\int_0^w\xi''(a)^{1/2}dB_1(a),\\
	X_{2,\gamma_h}(w)&=h+\int_0^w\gamma_{P}(a)\xi''(a)\partial_x\Phi_{\gamma_{h} }(a,X_{2,\gamma_h}(a))da+\int_0^w\xi''(a)^{1/2}dB_2(a).
	\end{split}
	\end{align}
	On the other hand, from \eqref{sec4.3:prop:eq1}, $v_1=\iota v_2$ on $[q_{h} ,|q|]$, which together with \eqref{sec4.3:prop:eq2} leads to $u_1=\iota u_2$ on $[q_{h} ,|q|].$ Now since $\partial_x\Phi_{\gamma_{h} }(w,\cdot)$ is a strictly increasing odd function by Lemma \ref{lem3},
	$$
	X_{1,\gamma_h}(w)=\iota X_{2,\gamma_h}(w),\,\,\forall w\in[q_{h} ,|q|].
	$$
	Consequently, from \eqref{sde}, 
	$$
	0=X_{1,\gamma_h}(w)-\iota X_{2,\gamma_h}(w)=(1-\iota)h+\int_0^w\xi''(a)^{1/2}d(B_1-\iota B_2)(a),\,\,\forall w\in[q_{h} ,|q|].
	$$
	This forces $B_1=\iota B_2$. However, from \eqref{sec4.3:prop:eq3}, $\iota w=\e B_1(w)B_2(w)=\iota tw$ for $w\in[q_{h} ,|q|]$, a contradiction since $t\in(0,1).$ This finishes our proof.
\end{proof}

\subsection{Proof of Theorem \ref{thm-1}}

Before we start the proof, we recall some concentration inequalities for the extreme values of Gaussian processes. They will be used several times in this subsection and Section \ref{Sec5}.   Recall the Hamiltonians, $H_N^h$, $H_{N,t}^{1,h}$, and $H_{N,t}^{2,h}$ from Section $1$. Observe that if $h=0,$ for any nonempty $A\subset\Sigma_N$ and $\hat{A}\subset \Sigma_N\times\Sigma_N$,
\begin{align*}
\max_{\sigma\in A}\e\Bigl(\frac{H_N^h(\sigma)}{N}\Bigr)^2&=\frac{\xi(1)}{N}
\end{align*}
and
\begin{align*}
\max_{(\sigma^1 ,\sigma^2  )\in \hat A}\e \Bigl(\frac{H_{N,t}^{1,h}(\sigma^1 )+H_{N,t}^{2,h}(\sigma^2  )}{N}\Bigr)^2&=\frac{2}{N}\max_{(\sigma^1 ,\sigma^2  )\in \hat{A}}\bigl(\xi(1)+t\xi(R(\sigma^1 ,\sigma^2  ))\bigr)\leq \frac{4\xi(1)}{N}.
\end{align*}
From the Gaussian concentration of measure (see \cite{BLM}), the random variables
\begin{align*}
{L_N^h}(A)&:=\max_{\sigma\in A}\frac{H_N^h(\sigma)}{N},\\
{\hat{L}}_{N,t}^h(\hat A)&:=\max_{R(\sigma^1 ,\sigma^2  )\in \hat A}\frac{H_{N,t}^{1,h}(\sigma^1 )+H_{N,t}^{2,h}(\sigma^2  )}{N}
\end{align*}
satisfy
\begin{align}
\label{con1}
\p\bigl(|L_N^h(A)-\e L_N^h(A)|\geq l\bigr)\leq 2\exp\Bigl(-\frac{Nl^2}{4\xi(1)}\Bigr).
\end{align}
and
\begin{align}
\label{con}
\p\bigl(|{\hat{L}}_{N,t}^h(\hat{A})-\e {\hat{L}}_{N,t}^h(\hat{A})|\geq l\bigr)\leq 2\exp\Bigl(-\frac{Nl^2}{8\xi(1)}\Bigr).
\end{align}
In addition, we note a well-known upper bound for the Gaussian extremal process (see, e.g., \cite{BLM}) that, for $h=0,$ gives
\begin{align}
\label{exg}
\p |L_N^0(A)|\leq \sqrt{2\xi''(1)\log 2}.
\end{align}
A crucial feature of these inequalities is that the upper bounds are valid independent of the choice of $A,\hat{A}$ and external field $h.$

\begin{proof}[\bf Proof of Theorem \ref{thm-1}] To prove Theorem \ref{thm-1}, let $t\in(0,1)$ be fixed. Recall $V_N$ from \eqref{add:eq1}. For any $\varepsilon>0,$ let $I:=[-1,1]\setminus(q_{t,h}-\varepsilon,q_{t,h}+\varepsilon)$ and $I_N:=V_N\cap I$. Define
	$$
	\hat{A}_N=\{(\sigma^1 ,\sigma^2  )\in\Sigma_N^2:R(\sigma^1 ,\sigma^2  )\in I_N\}
	$$
	and for $q\in V_N,$
	$$
	\hat{A}_{N,q}=\{(\sigma^1 ,\sigma^2  )\in\Sigma_N^2:R(\sigma^1 ,\sigma^2  )=q\}.
	$$
	Write
	\begin{align*}
	\e{\hat{L}}_{N,t}^h(\hat{A}_N)=\e\max_{q\in I_N}\bigr({\hat{L}}_{N,t}^h(\hat{A}_{N,q})-\e {\hat{L}}_{N,t}^h(\hat{A}_{N,q})+\e {\hat{L}}_{N,t}^h(\hat{A}_{N,q})\bigr).
	\end{align*}
	Since $I_N$ contains at most $2N+1$ elements, this and the uniform concentration \eqref{con} together imply
	\begin{align*}
	\limsup_{N\rightarrow\infty}\e {\hat{L}}_{N,t}^h(\hat{A}_N)&\leq\limsup_{N\rightarrow\infty}\max_{q\in I_N}\e {\hat{L}}_{N,t}^h(\hat{A}_{N,q})\\
	&\leq \lim_{N\rightarrow\infty}\max_{q\in I_N}\Lambda(q)\\
	&\leq \max_{q\in S}\Lambda(q).
	\end{align*}
	Note that one may use the variational representation \eqref{thm2:eq2} for $\Psi_\gamma$ to obtain the continuity of $\Lambda(\lambda,\gamma,q)$ (see \cite[Theorem 4]{C14}), which implies that $\Lambda(q)$ is upper semi-continuous on $[-1,1]$.
	Since $\Lambda(q)<2M(h)$ for all $q\in[-1,1]\setminus\{q_{t,h}\}$ by Proposition \ref{sec4:prop1}, $\max_{q\in S}\Lambda(q)<2M(h)$ and thus,
	$$
	\limsup_{N\rightarrow\infty}\frac{1}{N}\e\max_{R(\sigma^1 ,\sigma^2  )\in I_N}\bigl(H_{N,t}^{1,h}(\sigma^1 )+H_{N,t}^{2,h}(\sigma^2  )\bigr)=\limsup_{N\rightarrow\infty}\e {\hat{L}}_{N,t}^h(\hat{A}_N)<2M(h).
	$$
	Using \eqref{con} again, there exists a constant $K>0$ such that the following event holds  with probability at least $1-K\exp\bigl(-N/K\bigr)$,
	\begin{align}\label{add:eq4}
	\frac{1}{N}\max_{R(\sigma^1 ,\sigma^2  )\in I_N}\bigl(H_{N,t}^{1,h}(\sigma^1 )+H_{N,t}^{2,h}(\sigma^2  )\bigr)&<\frac{1}{N}\Bigl(\max_{\sigma^1 \in\Sigma_N}H_{N,t}^{1,h}(\sigma^1 )+\max_{\sigma^2  \in\Sigma_N}H_{N,t}^{2,h}(\sigma^2  )\Bigr).
	\end{align}
	Note that
	\begin{align*}
	H_{N,t}^{1,h}(\sigma_{t,h}^1 )=\max_{\sigma \in\Sigma_N}H_{N,t}^{1,h}(\sigma ),\,\,H_{N,t}^{2,h}(\sigma_{t,h}^2)=\max_{\sigma  \in\Sigma_N}H_{N,t}^{2,h}(\sigma  ).
	\end{align*}
	Assume that $h=0.$ Since $q_{t,0}=0,$ the set $I_N$ is symmetric with respect to the origin. As a result, if $|R(\sigma_{t,h}^1 ,\sigma_{t,h}^2)|\in I_N$, then $R(\sigma_{t,h}^1 ,\sigma_{t,h}^2)\in I_N,$ which violates \eqref{add:eq4}. This means that the event $|R(\sigma_{t,h}^1 ,\sigma_{t,h}^2)|\in I_N$ has probability at most $K\exp\bigl(-N/K\bigr)$, which completes the proof of Theorem~\ref{thm-1}$(i).$ Similarly, if $h\neq 0$ and $R(\sigma_{t,h}^1,\sigma_{t,h}^2)\in I_N,$ then this again violates \eqref{add:eq4} and we conclude that the event $R(\sigma_{t,h}^1 ,\sigma_{t,h}^2)\in I_N$ has probability at most $K\exp\bigl(-N/K\bigr)$. The property that $q_{t,h}\in(0,q_h)$ follows directly from Proposition \ref{prop4.2}. This finishes the proof of Theorem~\ref{thm-1}$(ii).$
\end{proof}

\section{Establishing exponential number of multiple peaks}\label{Sec5}

This section establishes the main result on the energy landscape of $H_N$. Subsection \ref{sec:prop-0} proves Proposition~\ref{prop-0} and Subsection~\ref{sec:thm1} verifies Theorem \ref{thm1} by using chaos in disorder.

\subsection{Proof of Proposition \ref{prop-0}}
\label{sec:prop-0}

We will prove Proposition \ref{prop-0}. We first establish some concentration inequalities for the Hamiltonian $H_N$ and the magnetization $m_N$ when they are evaluated at the maximizer of $L_N^h.$ Recall $M(h)$ from \eqref{add:eq2}. 

\begin{proposition}\label{sec5:prop1}
	$M$ is continuously differentiable on $\mathbb{R}$ with
	\begin{align*}
	M'(h)&=\partial_x\Phi_{\gamma_{h} }(0,h).
	\end{align*}
	In particular, $M'(0)=0.$
\end{proposition}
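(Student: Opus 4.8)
The plan is to view $M(h)=\inf_{\gamma\in\mathcal{U}}\mathcal{P}_h(\gamma)$ as the lower envelope of the family of functions $h\mapsto\mathcal{P}_h(\gamma)=\Phi_\gamma(0,h)-\frac{1}{2}\int_0^1\xi''(s)s\gamma(s)\,ds$ and to read off $M'(h)$ from the unique minimizer $\gamma_h$, whose uniqueness is Theorem~\ref{thm4}. I would first record two preliminary facts. For each fixed $\gamma\in\mathcal{U}$ the map $h\mapsto\mathcal{P}_h(\gamma)$ is continuously differentiable with $\frac{d}{dh}\mathcal{P}_h(\gamma)=\partial_x\Phi_\gamma(0,h)$, because Proposition~\ref{pde:prop1}$(ii)$ (case $k=1$) gives that $\partial_x\Phi_\gamma$ exists and is continuous on $[0,1)\times\mathbb{R}$, while the $\gamma$-integral term is independent of $h$. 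Moreover $M$ is convex on $\mathbb{R}$: for each $N$ the random function $h\mapsto L_N^h=\max_{\sigma\in\Sigma_N}\bigl(H_N(\sigma)/N+h\,m_N(\sigma)\bigr)$ is a maximum of affine functions of $h$, hence convex; combining \eqref{add:eq2} with the Gaussian concentration \eqref{con1} yields $\e L_N^h\to M(h)$, so $M$ is a pointwise limit of the convex functions $h\mapsto\e L_N^h$ and is therefore convex. In particular $M$ has finite one-sided derivatives with $M_-'(h)\le M_+'(h)$ at every point.

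Next I would squeeze these one-sided derivatives between $\partial_x\Phi_{\gamma_h}(0,h)$ by using $\gamma_h$ as a competitor at a nearby external field. Since $M(h')\le\mathcal{P}_{h'}(\gamma_h)$ while $M(h)=\mathcal{P}_h(\gamma_h)$, and the $\gamma$-integral term cancels,
\[
M(h')-M(h)\ \le\ \Phi_{\gamma_h}(0,h')-\Phi_{\gamma_h}(0,h)\ =\ \int_h^{h'}\partial_x\Phi_{\gamma_h}(0,u)\,du .
\]
Dividing by $h'-h$ and letting $h'\downarrow h$, the continuity of $u\mapsto\partial_x\Phi_{\gamma_h}(0,u)$ gives $M_+'(h)\le\partial_x\Phi_{\gamma_h}(0,h)$; letting instead $h'\uparrow h$ (so $h'-h<0$ and the inequality reverses upon division) gives $M_-'(h)\ge\partial_x\Phi_{\gamma_h}(0,h)$. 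Together with $M_-'(h)\le M_+'(h)$ this forces
\[
M_-'(h)=M_+'(h)=\partial_x\Phi_{\gamma_h}(0,h),
\]
so $M$ is differentiable at every $h$ with the stated derivative. A convex function on an open interval that is everywhere differentiable automatically has continuous derivative — its derivative coincides with both the left derivative, which is left-continuous, and the right derivative, which is right-continuous — so $M\in C^1(\mathbb{R})$. (Alternatively one may establish that $h\mapsto\gamma_h$ is continuous in the $L^1$ metric on $\mathcal{U}$ via a compactness-and-uniqueness argument using continuity of $\mathcal{P}_h$ and Theorem~\ref{thm4}, and then conclude from the joint continuity in Proposition~\ref{pde:prop1}.) Finally, $M'(0)=\partial_x\Phi_{\gamma_0}(0,0)=0$ since $\partial_x\Phi_{\gamma_0}(0,\cdot)$ is odd by Lemma~\ref{lem3}.

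I do not expect a genuine obstacle here: along the convexity route the passage from ``differentiable everywhere'' to $C^1$ is essentially free, and the rest is just combining the competitor inequality with the regularity of $\Phi_\gamma$ (Proposition~\ref{pde:prop1}) and uniqueness of the Parisi measure (Theorem~\ref{thm4}). If one wished to avoid convexity and argue instead with Dini derivatives directly, the real difficulty would be showing $\partial_x\Phi_{\gamma_{h'}}(0,\cdot)\to\partial_x\Phi_{\gamma_h}(0,\cdot)$ locally uniformly as $h'\to h$, which would require both the continuity of $h\mapsto\gamma_h$ and uniform control of the spatial derivatives of $\Phi_\gamma$ under perturbations of $\gamma$, of the kind furnished by Proposition~\ref{pde:prop1}$(ii)$.
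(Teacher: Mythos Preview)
Your proposal is correct and follows essentially the same route as the paper's proof (which the authors attribute to \cite{P08}): use the minimizer $\gamma_h$ as a competitor at $h\pm\delta$ to bound the difference quotients of $M$ by those of $\Phi_{\gamma_h}(0,\cdot)$, take limits using the regularity of $\partial_x\Phi_{\gamma_h}$ from Proposition~\ref{pde:prop1}, and invoke convexity of $M$ to close the squeeze and upgrade differentiability to $C^1$ (the paper cites \cite[Theorem~25.3]{convex}); $M'(0)=0$ comes from Lemma~\ref{lem3} in both. Your write-up is in fact more explicit than the paper's about why convexity is needed to pass from the two one-sided bounds to actual differentiability.
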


\begin{proof}
	This proof is exactly the same as that of  \cite[Theorem 1]{P08}. Let $\delta>0.$ Using the Parisi formula \eqref{parisi},
	\begin{align*}
	\frac{M(h+\delta)-M(h)}{\delta}&\leq \frac{\Phi_{\gamma_{h} }(0,h+\delta)-\Phi_{\gamma_{h} }(0,h)}{\delta}
	\end{align*}
    and
	\begin{align*}
	\frac{M(h)-M(h-\delta)}{\delta}&\geq \frac{\Phi_{\gamma_h}(0,h)-\Phi_{\gamma_h}(0,h-\delta)}{\delta}.
	\end{align*}
    Applying Proposition~\ref{pde:prop1}$(iii)$ yields
	\begin{align*}
	\limsup_{\delta\downarrow 0}\frac{M(h+\delta)-M(h)}{\delta}&\leq \partial_x\Phi_{\gamma_{h} }(0,h)
	\end{align*}
	and
	\begin{align*}
	\liminf_{\delta\downarrow 0}\frac{M(h)-M(h-\delta)}{\delta}&\geq \partial_x\Phi_{\gamma_{h} }(0,h),
	\end{align*}
	which completes the proof of the differentiability of $M$. Using Lemma \ref{lem3} gives $M'(0)=0.$ Finally, since $M$ is convex and differentiable on $\mathbb{R},$ it is a classical result in convex analysis (see, e.g.,  \cite[Theorem 25.3]{convex}) that these two assumptions automatically imply the continuity of $M'$ and thus, $M$ is continuously differentiable. This finishes our proof.
\end{proof}

Let $\sigma_h:=\mbox{Argmax}_{\sigma\in\Sigma_N}H_N^h(\sigma).$ In what follows, we show that the magnetization of $\sigma_h$ is concentrated around a constant.

\begin{proposition}\label{thm3}
	For any $\varepsilon>0$, there exists some $K>0$ such that
	\begin{align*}
	\p \Bigl(\Bigl|m_N(\sigma_h)-M'(h)\Bigr|\geq \varepsilon\Bigr)\leq K\exp \Bigl(-\frac{N}{K}\Bigr)
	\end{align*}
	for any $N\geq 1.$
\end{proposition}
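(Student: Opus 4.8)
The plan is to obtain the concentration of $m_N(\sigma_h)$ from the already-established differentiability of $M$ together with the Gaussian concentration inequality \eqref{con1}. The key observation is that the magnetization is, up to a vanishing error, the derivative of the maximal energy $L_N^h$ in the external field $h$. Indeed, for fixed disorder, the map $h\mapsto L_N^h=\max_{\sigma}\bigl(H_N(\sigma)/N+hm_N(\sigma)\bigr)$ is convex in $h$, being a maximum of affine functions, and at any point of differentiability its derivative equals $m_N(\sigma_h)$ (for $h\neq 0$ the maximizer $\sigma_h$ is a.s.\ unique, so $L_N^h$ is differentiable at $h$ a.s.; for general $h$ one works with left/right derivatives). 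So the strategy is: (1) sandwich $m_N(\sigma_h)$ between difference quotients of $L_N^h$; (2) use \eqref{con1} to replace each $L_N^{h\pm\delta}$ by its expectation $\e L_N^{h\pm\delta}$ with exponentially small error; (3) use that $\e L_N^{h\pm\delta}\to M(h\pm\delta)$ and that $M$ is continuously differentiable with $M'(h)=\partial_x\Phi_{\gamma_h}(0,h)$ by Proposition \ref{sec5:prop1}, to pin the difference quotients near $M'(h)$.

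In more detail, the first step is convexity: for any $\delta>0$ and any realization of the disorder,
\begin{align*}
\frac{L_N^{h+\delta}-L_N^h}{\delta}\;\geq\; m_N(\sigma_h)\;\geq\;\frac{L_N^h-L_N^{h-\delta}}{\delta},
\end{align*}
which follows by comparing the value of the functional at the maximizer $\sigma_h$ of $H_N^h$ with the (larger) maximal values at $h\pm\delta$. Next, the uniform Gaussian concentration \eqref{con1} applied with $A=\Sigma_N$ gives, for each of $h-\delta,h,h+\delta$, that $|L_N^{h'}-\e L_N^{h'}|<\delta^2$ (say) with probability at least $1-2e^{-N\delta^4/(4\xi(1))}$; on the intersection of these three events the difference quotients above differ from $(\e L_N^{h+\delta}-\e L_N^h)/\delta$ and $(\e L_N^h-\e L_N^{h-\delta})/\delta$ by at most $2\delta$. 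Then the deterministic convergence $\e L_N^{h'}\to M(h')$ for $h'\in\{h-\delta,h,h+\delta\}$ — which holds by \eqref{parisi} together with $L^1$-convergence from \eqref{con1}, or directly from known results — lets us conclude that for $N$ large both difference quotients lie within, say, $\delta$ of $(M(h+\delta)-M(h))/\delta$ and $(M(h)-M(h-\delta))/\delta$ respectively. Finally, choosing $\delta=\delta(\varepsilon)$ small enough that these two quantities are within $\varepsilon/2$ of $M'(h)$ (possible since $M$ is differentiable at $h$), and absorbing all the additive $O(\delta)$ errors into $\varepsilon$, yields $|m_N(\sigma_h)-M'(h)|<\varepsilon$ off an event of probability at most $6e^{-N\delta^4/(4\xi(1))}$; after adjusting the constant $K$ to also cover the finitely many small $N$, this is the claimed bound.

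The main obstacle — and it is a mild one — is handling the case $h=0$ cleanly, since there the maximizer is not unique (the set of maximizers is $\{\sigma_0,-\sigma_0\}$) and $m_N(\pm\sigma_0)=\pm m_N(\sigma_0)$, so the statement concerns a specific choice of $\sigma_h$. However, since $M'(0)=0$ by Proposition \ref{sec5:prop1}, the two-sided difference-quotient sandwich forces $|m_N(\sigma_0)|<\varepsilon$ regardless of the sign ambiguity, so the argument goes through verbatim; one simply uses $|m_N(\sigma_h)|\le \max\bigl((L_N^{\delta}-L_N^0)/\delta,\,(L_N^0-L_N^{-\delta})/\delta\bigr)$ together with the parity $L_N^{-\delta}=L_N^{\delta}$ at $h=0$ to reduce to a one-sided estimate. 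A secondary point requiring a line of care is justifying $\e L_N^{h'}\to M(h')$ uniformly enough in the three shifted fields; this follows from the $N$-independence of the constant in \eqref{con1} and the existence of the limit \eqref{add:eq2}, so no new input is needed. All remaining steps are routine.
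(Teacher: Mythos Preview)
Your proof is correct, but it takes a different route from the paper's. The paper argues by showing that the \emph{restricted} maxima
\[
B_N^\pm:=\max_{\pm(m_N(\sigma)-M'(h))>\varepsilon}\frac{H_N^h(\sigma)}{N}
\]
satisfy $\e B_N^\pm\leq \e L_N^{h\pm\lambda}-\lambda(\varepsilon\pm M'(h))$ for any $\lambda>0$, and then uses the differentiability of $M$ at $h$ (Proposition~\ref{sec5:prop1}) to find $\lambda$ small with $\limsup_N\e B_N^\pm\leq M(h)-\varepsilon'$ for some $\varepsilon'>0$; Gaussian concentration then forces $B_N^\pm<L_N^h$ with the required probability, so the maximizer cannot lie in either restricted set. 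Your argument instead sandwiches $m_N(\sigma_h)$ directly between the difference quotients $(L_N^{h\pm\delta}-L_N^h)/(\pm\delta)$ via convexity of $h\mapsto L_N^h$, concentrates each $L_N^{h'}$ around its mean, and appeals to differentiability of $M$ to finish --- this is the classical Griffiths-lemma mechanism. Both proofs use exactly the same two ingredients (Proposition~\ref{sec5:prop1} and \eqref{con1}) and are essentially dual to one another: the paper perturbs $h$ inside the constrained maximum, you perturb $h$ in the unconstrained one. Your approach is slightly more elementary and requires no restricted-maximum machinery; the paper's approach, on the other hand, is stylistically aligned with the restricted-maxima arguments used throughout (e.g.\ in the proof of Theorem~\ref{thm-1}), which is presumably why it was chosen.
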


\begin{proof}
	Let $\varepsilon>0$.
	Define
	\begin{align*}
	B_{N}^+&=\max_{m_N(\sigma)>M'(h)+\varepsilon}\frac{H_N^h(\sigma)}{N},\\
	B_{N}^-&=\max_{m_N(\sigma)<M'(h)-\varepsilon}\frac{H_N^h(\sigma)}{N}.
	\end{align*}
	Then for any $\lambda>0,$
	\begin{align*}
	\e B_{N}^+&\leq \e\max_{m_N(\sigma)>M'(h)+\varepsilon}\Bigl(\frac{H_N(\sigma)}{N}+(h+\lambda)m_N(\sigma)-\lambda M'(h)\Bigr)-\lambda \varepsilon\\
	&\leq \e L_{N}^{h+\lambda} -\lambda(M'(h)+\varepsilon).
	\end{align*}
	and
	\begin{align*}
	\e B_{N}^-&\leq \e\max_{m_N(\sigma)<M'(h)-\varepsilon}\Bigl(\frac{H_N(\sigma)}{N}+(h-\lambda) m_N(\sigma)+\lambda M'(h)\bigr)\Bigr)-\lambda \varepsilon\\
	&\leq \e L_N^{h-\lambda} -\lambda(\varepsilon-M'(h)).
	\end{align*}
	Passing to the limit, 
	\begin{align}\label{prop0:proof:eq1}
	\limsup_{N\rightarrow\infty}\e B_N^\pm&\leq M(h\pm\lambda)-\lambda(\varepsilon\pm M'(h)).
	\end{align}
	By Proposition \ref{sec5:prop1}, the $\lambda$-derivative at zero on the right-hand side of \eqref{prop0:proof:eq1} is equal to $-\varepsilon$. Thus, we can choose $\lambda$ small enough so that
	\begin{align*}
	\limsup_{N\rightarrow\infty}\e B_N^\pm&\leq M(h)-\varepsilon'.
	\end{align*}
	for some $\varepsilon'>0.$
	From the Gaussian concentration of measure \eqref{con1}, there exists a constant $K>0$ such that as long as $N$ is big enough, the following holds with probability at least $1-K\exp(-N/K)$,
	\begin{align*}
	B_N^\pm\leq  L_N(h) -\frac{\varepsilon'}{2}.
	\end{align*}
	Therefore, the probability that $\sigma_h$ satisfies $|m_N(\sigma_h)-M'(h)|> \varepsilon$ is at most $K\exp(-N/K),$ which finishes the proof.
\end{proof}

Recall the function $E(h)$ from \eqref{eq-5}. Next, we show that $H_N(\sigma_h)/N$ is essentially near the energy level $E(h).$

\begin{proposition}\label{prop2}
	Let $h\geq 0$. For any $\varepsilon>0$, there exists some $K>0$ such that
	\begin{align*}
	\p\Bigl(\Bigl|\frac{H_N(\sigma_h)}{N}-E(h)\Bigr|\geq \varepsilon\Bigr)\leq K\exp\Bigl(-\frac{N}{K}\Bigr)
	\end{align*}
	for all $N\geq 1.$
\end{proposition}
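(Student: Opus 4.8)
The starting point is the elementary identity coming from the definition $H_N^h=H_N+hNm_N$. Evaluating at the maximizer $\sigma_h$ of $H_N^h$ gives
\begin{align*}
\frac{H_N(\sigma_h)}{N}=L_N^h-h\,m_N(\sigma_h),
\end{align*}
so that, recalling $E(h)=M(h)-hM'(h)$ from \eqref{eq-5},
\begin{align*}
\Bigl|\frac{H_N(\sigma_h)}{N}-E(h)\Bigr|\leq \bigl|L_N^h-M(h)\bigr|+h\bigl|m_N(\sigma_h)-M'(h)\bigr|.
\end{align*}
Thus it suffices to control each of the two terms on the right with exponential probability. The second term is handled directly by Proposition \ref{thm3}: for any $\delta>0$ there is $K_1>0$ with $\p(|m_N(\sigma_h)-M'(h)|\geq\delta)\leq K_1e^{-N/K_1}$ for all $N\geq 1$.

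For the first term I would use the Gaussian concentration inequality \eqref{con1} applied with $A=\Sigma_N$, which gives $\p(|L_N^h-\e L_N^h|\geq\delta)\leq 2\exp(-N\delta^2/(4\xi(1)))$, together with the Parisi formula \eqref{parisi} (equivalently \eqref{add:eq2}), which guarantees $L_N^h\to M(h)$ a.s. and hence $\e L_N^h\to M(h)$ (the convergence of the means follows from a.s.\ convergence plus \eqref{con1}, or directly from the convexity/boundedness available here). Consequently there is $N_0=N_0(h,\xi,\delta)$ such that $|\e L_N^h-M(h)|<\delta$ for all $N\geq N_0$, and therefore $\p(|L_N^h-M(h)|\geq 2\delta)\leq 2\exp(-N\delta^2/(4\xi(1)))$ for $N\geq N_0$.

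Combining the two estimates with the triangle inequality above and choosing $\delta=\varepsilon/(3+h)$ (so that $2\delta+h\delta<\varepsilon$), we obtain, for all $N\geq N_0$,
\begin{align*}
\p\Bigl(\Bigl|\frac{H_N(\sigma_h)}{N}-E(h)\Bigr|\geq\varepsilon\Bigr)\leq 2\exp\Bigl(-\frac{N\delta^2}{4\xi(1)}\Bigr)+K_1e^{-N/K_1}\leq \tfrac12 Ke^{-N/K}
\end{align*}
for a suitable $K>0$ depending only on $h,\xi,\varepsilon$. Finally, for the finitely many $1\leq N<N_0$ the probability is trivially bounded by $1\leq Ke^{-N/K}$ after enlarging $K$ if necessary, which yields the claim for all $N\geq 1$. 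I do not expect a genuine obstacle here: all the real work is already contained in Proposition \ref{thm3} (concentration of the magnetization at the maximizer) and in the concentration inequality \eqref{con1}; the only point requiring a small amount of care is that \eqref{con1} concentrates $L_N^h$ around its mean rather than around $M(h)$, so one must invoke the (rate-free) convergence $\e L_N^h\to M(h)$ and then absorb the small-$N$ regime into the constant $K$.
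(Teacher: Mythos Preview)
Your proposal is correct and follows essentially the same route as the paper: decompose $H_N(\sigma_h)/N-E(h)$ into $(L_N^h-\e L_N^h)+(\e L_N^h-M(h))+h(M'(h)-m_N(\sigma_h))$, then control the three pieces respectively by the Gaussian concentration \eqref{con1}, the convergence $\e L_N^h\to M(h)$, and Proposition~\ref{thm3}. Your version is in fact slightly more explicit than the paper's (choice of $\delta$, absorbing the finitely many small $N$ into $K$), but the argument is the same.
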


\begin{proof}
	From the definition of $\sigma_h$, we write
	\begin{align*}
	\frac{H_N(\sigma_h)}{N}-E(h)&=\Bigl(L_N^h-\e L_N^h\Bigr)\\
	&+\Bigl(\e L_N^h-M(h)\Bigr)\\
	&+h\big(M'(h)-m_N(\sigma_h)\bigr).
	\end{align*}
	Now the first term on the right-hand side can be controlled by applying the Gaussian concentration of measure to $L_N^h$. The second term can be handled by noting that $M(h)=\lim_{N\rightarrow\infty}\e L_N^h$. Control of the last term follows from Proposition \ref{thm3}.
\end{proof}

Next we develop an auxiliary lemma that will be used to show that $\lim_{h\rightarrow \infty}E(h)=0$ in Proposition \ref{prop-0}.
\begin{lemma}\label{lem1}
	If $m_N(\sigma^1)=k_1/N$ and $m_N(\sigma^2)=k_2/N$, then $$
	R(\sigma^1,\sigma^2)\geq \frac{1}{4}\Bigl(\frac{3k_1}{N}+\frac{3k_2}{N}-2\Bigr).
	$$
\end{lemma}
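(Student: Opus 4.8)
The plan is to argue combinatorially, by classifying the $N$ coordinates according to the pair of signs they carry. For each $i\in\{1,\dots,N\}$ the pair $(\sigma^1_i,\sigma^2_i)$ is one of $(1,1),(1,-1),(-1,1),(-1,-1)$; let $a,b,c,d$ denote the number of indices of each of these four types, in that order. Then $a+b+c+d=N$, and, reading off the two magnetizations and the overlap,
$$ k_1=(a+b)-(c+d),\qquad k_2=(a+c)-(b+d),\qquad NR(\sigma^1,\sigma^2)=(a+d)-(b+c). $$

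First I would use these linear relations to reduce to a one-parameter problem. Adding the two magnetization identities gives $a-d=\tfrac12(k_1+k_2)$, subtracting them gives $b-c=\tfrac12(k_1-k_2)$, and together with $a+b+c+d=N$ this shows that once $d$ is fixed all of $a,b,c$ are determined; in particular the number of agreeing coordinates is $a+d=2d+\tfrac12(k_1+k_2)$, so
$$ NR(\sigma^1,\sigma^2)=2(a+d)-N=4d-N+(k_1+k_2). $$
Hence $R$ is an increasing affine function of the single free count $d$, and the desired inequality for $R$ is equivalent to a suitable lower bound for $d$.

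Second I would extract that lower bound from the non-negativity of $a,b,c$ together with the trivial cardinality bounds $b\le\#\{i:\sigma^1_i=1\}$, $b\le\#\{i:\sigma^2_i=-1\}$, $c\le\#\{i:\sigma^1_i=-1\}$ and $c\le\#\{i:\sigma^2_i=1\}$; each of these translates, via $a-d=\tfrac12(k_1+k_2)$ and $b-c=\tfrac12(k_1-k_2)$, into a linear constraint on $d$. Choosing for the given $k_1,k_2$ the binding constraint and substituting the resulting bound into $NR=4d-N+(k_1+k_2)$ should yield, after simplification, precisely $R\ge\tfrac14\big(\tfrac{3k_1}{N}+\tfrac{3k_2}{N}-2\big)$.

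I expect the main obstacle to be exactly this last bookkeeping: one must track which cardinality constraint is active as $k_1,k_2$ and $k_1\pm k_2$ vary in sign, and then verify that the resulting piecewise-linear envelope for $d$ produces the coefficient $\tfrac34$ and the constant $-\tfrac12$ — this is sharper than the crude estimate $d\ge 0$, which only gives the weaker bound $R\ge m_N(\sigma^1)+m_N(\sigma^2)-1$. A convenient way to organize the verification is to settle the symmetric case $k_1=k_2$ first, where the extremal configuration is transparent, and then reduce the general case to it by an exchange argument on coordinates.
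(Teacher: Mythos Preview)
Your combinatorial setup with the four counts $a,b,c,d$ is equivalent to the paper's, which works instead with the sets $P_j=\{i:\sigma^j_i=1\}$ (so $a=|P_1\cap P_2|$, $d=|P_1^c\cap P_2^c|$, etc.), and your reduction to a one-parameter problem in $d$ is correct. You also correctly observe that $d\ge 0$ alone yields only $R\ge m_N(\sigma^1)+m_N(\sigma^2)-1$. The obstacle you anticipate in the final step is real, but for a more basic reason than bookkeeping: the inequality as stated is \emph{false}. Take $N=2$, $\sigma^1=(1,-1)$, $\sigma^2=(-1,1)$; then $k_1=k_2=0$ and $R=-1$, whereas the claimed lower bound is $-\tfrac12$. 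More generally, your own parametrization gives the sharp bound $NR=4d_{\min}+(k_1+k_2)-N$ with $d_{\min}=\max\bigl(0,-\tfrac12(k_1+k_2)\bigr)$, i.e.\ $R\ge |m_N(\sigma^1)+m_N(\sigma^2)|-1$. The extra cardinality constraints you propose ($b\le |P_1|$, $b\le|P_2^c|$, $c\le|P_1^c|$, $c\le|P_2|$) unwind, via $b=\tfrac{N-k_2}{2}-d$ and $c=\tfrac{N-k_1}{2}-d$, to $d\ge 0$ or $d\ge -\tfrac12(k_1+k_2)$ and are therefore redundant with $a,b,c,d\ge 0$; they cannot produce the coefficient $\tfrac34$.

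What happened in the paper's own argument is an arithmetic slip: in the displayed chain $NR\ge |P_1\cap P_2|-|P_2^c|-|P_1^c|\ge\tfrac12\bigl(k_1+k_2-\tfrac{N-k_1}{2}-\tfrac{N-k_2}{2}\bigr)$, the outer factor $\tfrac12$ should multiply only $k_1+k_2$ (coming from $|P_1\cap P_2|\ge\tfrac12(k_1+k_2)$), not the terms $|P_1^c|+|P_2^c|=\tfrac{N-k_1}{2}+\tfrac{N-k_2}{2}$. The corrected conclusion is $NR\ge k_1+k_2-N$, precisely your ``crude'' bound from $d\ge 0$. This weaker inequality is still sufficient for the only place the lemma is invoked (bounding $\xi(1)-\xi(R)$ in the proof that $E(h)\to 0$ as $h\to\infty$): there the magnetizations are close to $1$, and replacing $\tfrac12(3M'(h)-3\varepsilon_h-1)$ by $2(M'(h)-\varepsilon_h)-1$ merely changes the harmless constant $\tfrac{3\xi''(1)}{4}$ to $\xi''(1)$.
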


\begin{proof}
	Set
	\begin{align*}
	P_1=\{1\leq i\leq N:\sigma_i^1=1\},\\
	P_2=\{1\leq i\leq N:\sigma_i^2=1\}.
	\end{align*}
	Note that $|P_1^c|=(N-k_1)/2$ and $|P_2^c|=(N-k_2)/2$ and that
	\begin{align*}
	|P_1\cap P_2|&=\frac{1}{2}\bigl(|P_1|+|P_2|-|P_1\cap P_2^c|-|P_2\cap P_1^c|\bigr)\\
	&\geq \frac{1}{2}\bigl(|P_1|+|P_2|-|P_2^c|-|P_1^c|\bigr)\\
	&=\frac{1}{2}(k_1+k_2).
	\end{align*}
	Lemma \ref{lem1} then follows by
	\begin{align*}
	NR(\sigma^1,\sigma^2)&=|P_1\cap P_2|+|P_1^c\cap P_2^c|-|P_1\cap P_2^c|-|P_1^c\cap P_2|\\
	&\geq |P_1\cap P_2|-|P_2^c|-|P_1^c|\\
	&\geq \frac{1}{2}\Bigl(k_1+k_2-\frac{(N-k_1)}{2}-\frac{(N-k_2)}{2}\Bigr)\\
	&=\frac{1}{4}\bigl(3k_1+3k_2-2N\bigr).
	\end{align*}
\end{proof}

\begin{lemma}\label{lem2}
	$\lim_{h\rightarrow\infty}E(h)=0.$
\end{lemma}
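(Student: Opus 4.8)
The plan is to write
$$
E(h)=\bigl(M(h)-h\bigr)+h\bigl(1-M'(h)\bigr)
$$
and to prove that each of the two summands tends to $0$ as $h\to\infty$; since both are nonnegative (see below) this gives $E(h)\to 0$, and incidentally $E(h)\ge 0$. The elementary ingredients are: since $m_N(\sigma)\le 1$ we have $L_N^h\le L_N^0+h$, hence $M(h)\le M(0)+h$; since $\e H_N(\mathbf 1)=0$ for the all-plus configuration $\mathbf 1$ and $m_N(\mathbf 1)=1$, we have $\e L_N^h\ge \e H_N(\mathbf 1)/N+h=h$, hence $M(h)\ge h$; and $L_N^h=\max_\sigma\bigl(H_N(\sigma)/N+hm_N(\sigma)\bigr)$ is a maximum of affine functions of $h$, so it, and its limit $M$, are convex. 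Convexity together with $M(h)\le M(0)+h$ forces $M'(h)\le 1$ for every $h$ (otherwise $M(h)-h$ would be unbounded), and then $1-M'$ is nonnegative and, by convexity of $M$, nonincreasing. All this uses only Proposition \ref{sec5:prop1} and the normalization of $H_N$.

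The first main step is to show $M(h)-h\to 0$. I would write $\frac{H_N^h(\sigma)}{N}-h=\frac{H_N(\sigma)}{N}-h\bigl(1-m_N(\sigma)\bigr)$ and note $1-m_N(\sigma)=\frac{2}{N}D(\sigma)$, where $D(\sigma)=\#\{i:\sigma_i=-1\}$, then split $\max_{\sigma\in\Sigma_N}$ into $\{D(\sigma)\le\varepsilon N\}$ and its complement for a fixed small $\varepsilon\in(0,1/2)$. On the complement the penalty is at least $2h\varepsilon$, so that part is at most $L_N^0-2h\varepsilon$; by the uniform bound \eqref{exg} on $\e L_N^0$ and the Gaussian concentration \eqref{con1}, once $2h\varepsilon$ exceeds $\sup_N\e L_N^0$ the quantity $\e\bigl(L_N^0-2h\varepsilon\bigr)^+$ is exponentially small in $N$. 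On $\{D(\sigma)\le\varepsilon N\}$ one drops the (nonpositive) penalty and bounds $\e\max_{D(\sigma)\le\varepsilon N}H_N(\sigma)/N$: there are at most $e^{N\mathcal H(\varepsilon)}$ such configurations ($\mathcal H$ the binary entropy), each $H_N(\sigma)/N$ centered Gaussian of variance $\xi(1)/N$, so this expectation is at most $\sqrt{2\xi(1)\mathcal H(\varepsilon)}+o(1)$, uniformly in $N$. Using $\max(a,b)\le a^+ + b^+$ and $\e L_N^h\to M(h)$ (from \eqref{con1}), one gets $M(h)-h\le\sqrt{2\xi(1)\mathcal H(\varepsilon)}$ for all $h$ large depending on $\varepsilon$; letting $h\to\infty$ and then $\varepsilon\to 0$, and combining with $M(h)\ge h$, yields $M(h)-h\to 0$. (Lemma \ref{lem1} can be used here to describe the geometry of the set $\{D(\sigma)>\varepsilon N\}$ relative to $\mathbf 1$ in place of the crude split, but the entropy bound is the quantitative core.)

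For the second step, since $M$ is differentiable (Proposition \ref{sec5:prop1}) we have $M(h)=M(0)+\int_0^h M'(s)\,ds$, i.e. $M(h)-h=M(0)-\int_0^h\bigl(1-M'(s)\bigr)ds$, so the first step gives $\int_0^\infty\bigl(1-M'(s)\bigr)ds=M(0)<\infty$. As $1-M'$ is nonnegative, nonincreasing and integrable,
$$
h\bigl(1-M'(h)\bigr)\le 2\int_{h/2}^h\bigl(1-M'(s)\bigr)ds\le 2\int_{h/2}^\infty\bigl(1-M'(s)\bigr)ds\xrightarrow[h\to\infty]{}0 .
$$
Hence $E(h)=(M(h)-h)+h(1-M'(h))\to 0$. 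As a consistency check one may instead invoke Propositions \ref{prop2} and \ref{thm3} together with the comparison $H_N(\sigma_h)/N\ge H_N(\mathbf 1)/N+h(1-m_N(\sigma_h))$, which gives $E(h)=\lim_N\e[H_N(\sigma_h)/N]\ge h(1-M'(h))\ge 0$.

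I expect the main obstacle to be the step $M(h)-h\to 0$: one must simultaneously (a) bound the contribution of configurations near $\mathbf 1$ by an entropy estimate that is small as $\varepsilon\to 0$ and, crucially, \emph{uniform in $N$}, and (b) verify that the external-field penalty genuinely suppresses every configuration far from $\mathbf 1$, which forces the threshold in the concentration bound \eqref{con1} to grow linearly in $h$. Once $M(h)-h\to 0$ is in hand, the passage to $h(1-M'(h))\to 0$ via the integrability of $1-M'$ is routine.
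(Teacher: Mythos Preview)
Your proof is correct and takes a genuinely different, more elementary route than the paper's.

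The paper first proves $M'(h)\to 1$ by invoking Proposition~\ref{thm3} (concentration of $m_N(\sigma_h)$ around $M'(h)$), then introduces an auxiliary positive-temperature free energy $V_N(s,h)$ restricted to the set $A_N(h,\varepsilon_h)=\{|m_N(\sigma)-M'(h)|<\varepsilon_h\}$, differentiates in $s$ via Gaussian integration by parts, and bounds the resulting overlap term using Lemma~\ref{lem1}. The temperature $\beta_h$ is tuned as $(1-M'(h)+\varepsilon_h)^{-1/2}$ so that the interpolation estimate yields $\e\max_{A_N(h,\varepsilon_h)}H_N(\sigma)/N\to 0$, from which $E(h)\to 0$ follows.

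Your argument bypasses all of this. The decomposition $E(h)=(M(h)-h)+h(1-M'(h))$ reduces the problem to $M(h)-h\to 0$, which you obtain by a direct first-moment entropy bound on configurations close to $\mathbf 1$ together with the trivial external-field penalty far from $\mathbf 1$; the second term then falls out from integrability of the nonincreasing function $1-M'$. In particular you never use Proposition~\ref{thm3}, Lemma~\ref{lem1}, or any positive-temperature interpolation---only convexity of $M$, the sandwich $h\le M(h)\le M(0)+h$, the standard bound $\e\max_{i\le M}X_i\le\sigma\sqrt{2\log M}$, and \eqref{con1}. Two small points to tighten: after invoking $\max(a,b)\le a^++b^+$ you need $\e(B')^+$ rather than $\e B'$, but $(B')^+\le\max_{D(\sigma)\le\varepsilon N}|H_N(\sigma)|/N$ gives the same bound up to an innocuous $\log 2/N$; and for the uniformity in $N$ of $\e(L_N^0-2h\varepsilon)^+$, the Gaussian tail from \eqref{con1} indeed gives a bound decreasing in $N$, so $N=1$ is the worst case. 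What the paper's approach buys is a link to quantities (restricted Gibbs energies, overlaps on $A_N$) that are already in play elsewhere; what your approach buys is a short, self-contained argument that would work for any Gaussian field indexed by $\Sigma_N$ with translation-invariant variance, with no Parisi-type input beyond differentiability of $M$.
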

\begin{proof}
	The argument contains three major steps. For any $\varepsilon>0$, define the set
	$$
		A_N(h,\varepsilon):=\bigl\{\sigma\in\Sigma_N:|m_N(\sigma)-M'(h)|<\varepsilon\bigr\}.
	$$
		First, we verify that $\lim_{h\rightarrow\infty }M'(h)=1.$
	Observe that using \eqref{con1} and \eqref{exg} with $h=0$ and $A=\Sigma_N$, we have that, with probability at least $1-Ke^{-KN}$,
	\begin{align*}
	-C\leq \max_{\sigma\in\Sigma_N}\frac{H_N(\sigma)}{N}< C,
	\end{align*}
		where $C,K>0$ are constants independent of $N.$
    This inequality implies that, with probability at least $1-Ke^{-KN},$
	\begin{align*}
	-\frac{C}{h}+\max_{\sigma\in\Sigma_N}m_N(\sigma)\leq \max_{\sigma\in\Sigma_N}\Bigl(\frac{H_N(\sigma)}{Nh}+m_N(\sigma)\Bigr)\leq \frac{C}{h}+\max_{\sigma\in\Sigma_N}m_N(\sigma).
	\end{align*} 
	Therefore, from the definition of $M(h)$ and the Borel-Cantelli lemma,
	\begin{align}
	\begin{split}\label{eq-6}
	\lim_{h\rightarrow\infty}\frac{M(h)}{h}&=\lim_{N\rightarrow\infty}\max_{\sigma\in\Sigma_N}m_N(\sigma)=1.
	\end{split}
	\end{align}
	On the other hand, from Proposition \ref{thm3}, a similar reasoning also gives that for any $\varepsilon>0,$
	\begin{align}
	\begin{split}\label{eq-7}
	\lim_{h\rightarrow\infty}\frac{M(h)}{h}&=\lim_{h\rightarrow\infty}\lim_{N\rightarrow\infty}\max_{A_N(h,\varepsilon)}\Bigl(\frac{H_N(\sigma)}{Nh}+m_N(\sigma)\Bigr)\\
	&=\lim_{h\rightarrow\infty}\lim_{N\rightarrow\infty}\max_{A_N(h,\varepsilon)} m_N(\sigma).
	\end{split}
	\end{align}
	Note that $M'(h)$ is a nondecreasing continuous function. If $\lim_{h\rightarrow\infty}M'(h)\neq 1$, then one can choose $\varepsilon$ small enough such that the above limits \eqref{eq-6} and \eqref{eq-7} contradict each other. Therefore, we conclude that $\lim_{h\rightarrow\infty}M'(h)=1.$
	
	Next, consider an auxiliary free energy, $$
	V_N(s,h)=\frac{1}{N\beta_h}\e\log\sum_{A_N(h,\varepsilon_h)}\exp\bigl( \beta_h\sqrt{s}H_N(\sigma)\bigr),
	$$
	where $\varepsilon_h$ is any positive function of $h$ satisfying $\lim_{h\rightarrow\infty}\varepsilon_h=0$ and $$
	\beta_h:=(1-M'(h)+\varepsilon_h)^{-1/2}.$$ Using Gaussian integration by parts, we get
	\begin{align*}
	\partial_sV_N(s,h)&=\frac{\beta_h}{2}\e\bigl\la \bigl(\xi(1)-\xi(R(\sigma^1,\sigma^2))\bigr)\bigr\ra'\\
	&\leq \frac{\beta_h}{2}\max\bigl\{\xi(1)-\xi(R(\sigma^1,\sigma^2)):\sigma^1,\sigma^2\in A_N(h,\varepsilon_h)\bigr\}\\
	&\leq \frac{\beta_h\xi''(1)}{2}\Bigl(1-\frac{1}{2}\Bigl(3M'(h)-3\varepsilon_h-1\Bigr)\Bigr)\\
	&=\frac{3\xi''(1)}{4}\bigl(1-M'(h)+\varepsilon_h\bigr)^{1/2}.
	\end{align*}
	Here, $\sigma^1$ and $\sigma^2$ in the first equality are two i.i.d. samplings from the Gibbs expectation with respect to the partition function,
	$$
	\sum_{A_N(h,\varepsilon_h)}\exp\bigl( \beta_h\sqrt{s}H_N(\sigma)\bigr)
	$$
	and $\la\cdot\ra'$ denotes the corresponding Gibbs average. The second inequality also used Lemma \ref{lem1}.
	In conclusion, we obtain that
	\begin{align}
	\begin{split}\label{eq4}
	|V_N(1,h)-V_N(0,h)|&\leq \max_{0\leq s\leq 1}\partial_sV_N(s,h)\\
	&\leq \frac{3\xi''(1)}{4}\bigl(1-M'(h)+\varepsilon_h\bigr)^{1/2}.
	\end{split}
	\end{align}
	The two terms on the left-hand side have the limits given in equations \eqref{eq5} and \eqref{eq6}. Note that $\lim_{h\rightarrow\infty}\beta_h=\infty$. Using the evident inequalities
	\begin{align*}
	\e\max_{A_N(h,\varepsilon_h)}\frac{H_N(\sigma)}{N}\leq V_N(1,h)&\leq \frac{\log 2}{\beta_h}+\e\max_{A_N(h,\varepsilon_h)}\frac{H_N(\sigma)}{N},
	\end{align*}
	we deduce that
	\begin{align}\label{eq5}
	\limsup_{h\rightarrow\infty}\limsup_{N\rightarrow\infty}\e\max_{A_N(h,\varepsilon_h)}\frac{H_N(\sigma)}{N}=\limsup_{h\rightarrow\infty}\limsup_{N\rightarrow\infty}V_N(1,h).
	\end{align}
	In addition, since
	\begin{align*}
	0<V_N(0,h)&=\frac{1}{N\beta_h}\log |{A_N(h,\varepsilon_h)}|\leq\frac{\log 2}{\beta_h},
	\end{align*}
	we also have
	\begin{align}
	\label{eq6}
	\limsup_{h\rightarrow\infty}\limsup_{N\rightarrow\infty}|V_N(0,h)|=0.
	\end{align}
	As a result, plugging \eqref{eq5} and \eqref{eq6} into \eqref{eq4} yields
	\begin{align}\label{eq3}
	\limsup_{h\rightarrow\infty}\limsup_{N\rightarrow\infty}\e\max_{A_N(h,\varepsilon_h)}\frac{H_N(\sigma)}{N}=0.
	\end{align}
	
	Finally, observe that from Proposition \ref{thm3}, for any $\varepsilon,h>0$
	\begin{align*}
	M(h)=\lim_{N\rightarrow\infty}\e\max_{A_N(h,\varepsilon)}\Bigl(\frac{H_N(\sigma)}{N}+hm_N(\sigma)\Bigr).
	\end{align*}
	This implies that
	\begin{align*}
	|E(h)|&\leq \varepsilon h+\lim_{N\rightarrow\infty}\e\max_{A_N(h,\varepsilon)}\frac{H_N(\sigma)}{N}.
	\end{align*}
	Since \eqref{eq3} holds for any $\varepsilon_h$ with $\lim_{h\rightarrow \infty}\varepsilon_h=0,$ letting $\varepsilon=\varepsilon_h=1/h^2$ in the last inequality and sending $h\rightarrow\infty$, we finish the proof.
\end{proof}

\begin{proof}[\bf Proof of Proposition \ref{prop-0}]
	Clearly $E(0)=M(0)$. From Proposition \ref{prop2} and Lemma \ref{lem2}, $E$ is continuous and satisfies $\lim_{h\rightarrow\infty}E(h)=0$. To prove monotonicity, for any $h'>h\geq 0$, write
	\begin{align*}
	E(h')-E(h)&=\bigl(M(h')-M(h)-(h'-h)M'(h')\bigr)+h\bigl(M'(h)-M'(h')\bigr).
	\end{align*}
	Since $M$ is convex, the first bracket on the right-hand side is not positive and neither is the other term. Therefore, $E$ is nonincreasing.
\end{proof}

\subsection{Proof of Theorem \ref{thm1}}
\label{sec:thm1}
Recall that $q_h$ is the smallest number in the support of the Parisi measure $\mu_h$. Also, recall the constant $q_{t,h}\leq q_h$ from Proposition \ref{prop4.2}.

\begin{proof}[\bf Proof of Theorem \ref{thm1}]
	Fix $h\geq 0.$ For a given $\varepsilon>0$, fix $t\in(0,1)$ such that
	\begin{align}\label{eq-8}
	\bigl((1-\sqrt{t})+\sqrt{1-t}\bigr)\bigl(\sqrt{2\xi''(1)\log 2}+1\bigr)<\frac{\varepsilon}{2}
	\end{align}
	and
	\begin{align}\label{eq-9}
	0\leq q_{t,h}-q_h\leq \frac{\varepsilon}{2}.
	\end{align}
	Let $n\geq 2$ be an integer, which will be specified later. Consider $n$ i.i.d. copies  $H_N^1,\ldots, H_N^n$ of $H_N$. Set
	\begin{align*}
	H_{N,t}^{\ell,h}(\sigma^\ell)=\sqrt{t}H_N(\sigma^\ell)+\sqrt{1-t}H_N^\ell(\sigma^\ell)+h\sum_{i=1}^N\sigma_i^\ell.
	\end{align*}
	Denote by ${\sigma}_{t,h}^1,\ldots,{\sigma}_{t,h}^n$ the maximizers of $H_{N,t}^{1,h},\ldots,H_{N,t}^{n,h}$ over the configuration space $\Sigma_N$, respectively. We need the following control:
	\begin{enumerate}
		\item	From Theorem \ref{thm-1}, there exists some $K_1>0$ independent of $n$ such that for any $1\leq \ell<\ell'\leq n$, the following inequality holds,
		\begin{align*}
		\p\Bigl(\bigl|R\bigl({\sigma}_{t,h}^{\ell },{\sigma}_{t,h}^{\ell'}\bigr)-q_{t,h}\bigr|\geq \frac{\varepsilon}{2}\Bigr)\leq K_1\exp\Bigl(-\frac{N}{K_1}\Bigr),
		\end{align*}
		from which
		\begin{align}
		\begin{split}
		\label{proof:eq1}
		&\p\Bigl(\max_{1\leq \ell<\ell'\leq n}|R\bigl({\sigma}_{t,h}^{\ell },{\sigma}_{t,h}^{\ell'}\bigr)-q_{t,h}|\geq \frac{\varepsilon}{2}\Bigr)\\
		&\leq \sum_{1\leq \ell<\ell'\leq n}\p\Bigl(|R\bigl({\sigma}_{t,h}^{\ell },{\sigma}_{t,h}^{\ell'}\bigr)-q_{t,h}|\geq \frac{\varepsilon}{2}\Bigr)\\
		&\leq n^2K_1\exp\Bigl(-\frac{N}{K_1}\Bigr).
		\end{split}
		\end{align}
		\item From Proposition \ref{prop2}, there exists $K_2>0$ independent of $n$ such that
		\begin{align}\label{proof:eq2}
		&\p\Bigl(\max_{1\leq \ell\leq n}\Bigl|\frac{H_{N,t}^{\ell,0}(\sigma_{t,h}^\ell)}{N}-E(h)\Bigr|\geq \frac{\varepsilon}{2}\Bigr)\leq nK_2\exp\Bigl(-\frac{N}{K_2}\Bigr).
		\end{align}
		\item From Proposition \ref{thm3}, there exists $K_3>0$ independent of $n$ such that
		\begin{align}\label{proof:eq4}
				&\p\Bigl(\max_{1\leq \ell\leq n}\bigl|m_N(\sigma_{t,h}^\ell)-M'(h)\bigr|\geq \varepsilon\Bigr)\leq nK_3\exp\Bigl(-\frac{N}{K_3}\Bigr).
		\end{align}
		\item Note that
		\begin{align*}
		\Bigl|\frac{H_{N,t}^{\ell,0}(\sigma_{t,h}^\ell)}{N}-\frac{H_{N}(\sigma_{t,h}^\ell)}{N}\Bigr|&\leq (1-\sqrt{t})\frac{|L_N|}{N}+\sqrt{1-t}\frac{|L_N^\ell|}{N},
		\end{align*}
		where $L_N:=N^{-1}\max_{\sigma\in\Sigma_N}H_N(\sigma)$ and $L_N^{\ell}:=N^{-1}\max_{\sigma^\ell\in\Sigma_N}H_N^\ell(\sigma^\ell).$
		Here, from \eqref{exg}, for any $N\geq 1,$
		$$
		\e|L_N|\leq \sqrt{2\xi''(1)\log 2}.
		$$
		Using the Gaussian concentration of measure \eqref{con1} for $N^{-1}L_N,N^{-1}L_N^1,\ldots,N^{-1}L_N^n$ and the above inequality, there exists some $K_4>0$ independent of $n$ such that
		\begin{align}\label{proof:eq3}
		\p\Bigl(\frac{1}{N}\max\bigl(|L_N|,|L_{N}^1|,\ldots,|L_{N}^n|\bigr) \geq \sqrt{2\xi''(1)\log 2}+1\Bigr)\leq (n+1)K_4\exp\Bigl(-\frac{N}{K_4}\Bigr).
		\end{align}		
	\end{enumerate}	
	From above, we see that \eqref{proof:eq1}, \eqref{proof:eq2}, \eqref{proof:eq4}, and \eqref{proof:eq3} are valid with probability at least
	\begin{align*}
	&1-n^2K_1\exp\Bigl(-\frac{N}{K_1}\Bigr)-nK_2\exp\Bigl(-\frac{N}{K_2}\Bigr)-nK_3\exp\Bigl(-\frac{N}{K_3}\Bigr)-(n+1)K_4\exp\Bigl(-\frac{N}{K_4}\Bigr).
	\end{align*}
	Consequently,  the following three statements hold with probability at least $1-n^2K_5e^{-N/K_5}$ for $K_5:=K_1+K_2+K_3+K_4$. First, from \eqref{proof:eq2}, \eqref{proof:eq3}, and then \eqref{eq-8}, for $1\leq \ell\leq n$,	
	$$
	\Bigl|\frac{H_{N}(\sigma_{t,h}^\ell)}{N}-E(h)\Bigr|\leq \bigl((1-\sqrt{t})+\sqrt{1-t}\bigr)\bigl(\sqrt{2\xi''(1)\log 2}+1\bigr)+\frac{\varepsilon}{2}<\varepsilon.
	$$
	Second, for $1\leq \ell\leq n,$
	$$
	|m_N(\sigma_{t,h}^\ell)-M'(h)|<\varepsilon.
	$$
	Finally, from \eqref{eq-9} and \eqref{proof:eq1},  for $1\leq \ell<\ell'\leq n$,
	\begin{align*}
	\bigl|R(\sigma_{t,h}^\ell,\sigma_{t,h}^{\ell'})-q_{h}\bigr|\leq \frac{\varepsilon}{2}+|q_{t,h}-q_h|<\varepsilon.
	\end{align*}
	Note that the constants $K_1,K_2,K_3,K_4$ are independent of $n$. Now take $n$ to be the largest integer less than $e^{N/3K_5}$
	and set $S_{N}(h)=\{\sigma_{t,h}^1,\ldots,\sigma_{t,h}^n\}.$ Then the announced statement in Theorem \ref{thm1} holds with $K=3K_5$.
\end{proof}


\appendix
\setcounter{secnumdepth}{0}
\section{Appendix}


\noindent This appendix is devoted to establishing Proposition \ref{pde:prop1}. We prove a priori estimate first:

\begin{lemma}\label{pde:lem}
	Let $0<r_0<r_1<r_2<\infty.$ Suppose that $\kappa_1,\kappa_2\in L^\infty([r_0 ,r_2]\times\mathbb{R})$ and $g\in L^\infty(\mathbb{R})$ with $\|\kappa_i\|_\infty\leq C_i$ for $i=1,2$ and $\|g\|_\infty\leq C_0$. Assume that $u$ is the classical solution to
	$$
	\partial_ru(r,x)=\partial_{xx}u(r,x)+\kappa_1(r,x)\partial_xu(r,x)+k_2(r,x),\,\,\forall (r,x)\in(r_0 ,{r_2} ]\times\mathbb{R}
	$$
	with initial condition $u(r_0 ,x)=g(x).$ If $$
	r\mapsto\|\partial_xu(r,\cdot)\|_\infty$$
	is continuous on $[r_0 ,{r_2} ]$, then there exists a nonnegative continuous function $F$ on $[0,\infty)^3$ depending only on $r_0 ,r_2$ such that
	\begin{align*}
	\sup_{(r,x)\in[r_1 ,r_2]\times\mathbb{R}}|\partial_xu(r,x)|&\leq F(C_0,C_1,C_2).
	\end{align*}
	
\end{lemma}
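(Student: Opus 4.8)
The plan is to use parabolic smoothing: represent $u$ by the Duhamel (variation of parameters) formula with the one-dimensional heat kernel, differentiate once in $x$, and then absorb the resulting singular integral term by a Gronwall-type argument. Only the $L^\infty$ bounds on $\kappa_1,\kappa_2,g$ are available, so the heat-kernel route is preferable to Schauder-type estimates.

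First I would set $\phi(r):=\|\partial_x u(r,\cdot)\|_\infty$, which by hypothesis is continuous on $[r_0,r_2]$ and hence bounded there, and absorb the lower-order terms into a source $f(r,x):=\kappa_1(r,x)\partial_x u(r,x)+\kappa_2(r,x)$, so that $\|f(r,\cdot)\|_\infty\le C_1\phi(r)+C_2$. Since $\phi$ is bounded on $[r_0,r_2]$, $u$ has at most linear growth in $x$ there; and since $g,f$ are bounded, the function
\[
\tilde u(r,x)=\int_{\mathbb R}G(r-r_0,x-y)\,g(y)\,dy+\int_{r_0}^{r}\!\!\int_{\mathbb R}G(r-s,x-y)\,f(s,y)\,dy\,ds,\qquad G(t,x)=\tfrac{1}{\sqrt{4\pi t}}\,e^{-x^2/4t},
\]
is bounded, solves the same equation with the same initial data, and therefore coincides with $u$ by parabolic uniqueness in the class of functions of at most linear growth. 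Differentiating in $x$ and using the elementary identity $\int_{\mathbb R}|\partial_x G(t,x)|\,dx=(\pi t)^{-1/2}$ yields, for every $r\in(r_0,r_2]$,
\[
\phi(r)\ \le\ \frac{C_0}{\sqrt{\pi(r-r_0)}}+\frac{C_2}{\sqrt\pi}\int_{r_0}^{r}\frac{ds}{\sqrt{r-s}}+\frac{C_1}{\sqrt\pi}\int_{r_0}^{r}\frac{\phi(s)}{\sqrt{r-s}}\,ds .
\]

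Next I would close this inequality. The middle term equals $\tfrac{2C_2}{\sqrt\pi}\sqrt{r-r_0}\le\tfrac{2C_2}{\sqrt\pi}\sqrt{r_2-r_0}$, so it remains to control $\int_{r_0}^r(r-s)^{-1/2}\phi(s)\,ds$ against a locally integrable inhomogeneity $a(r)$ of the form $C_0(\pi(r-r_0))^{-1/2}+\text{const}$. This is precisely the setting of the generalized (singular) Gronwall inequality: from $\phi(r)\le a(r)+b\int_{r_0}^r(r-s)^{-1/2}\phi(s)\,ds$ with $b=C_1/\sqrt\pi$ one obtains $\phi(r)\le a(r)+\int_{r_0}^r E(r-s)\,a(s)\,ds$ for an iterated kernel $E$ with $E(t)\le K(C_1,r_2-r_0)(1+t^{-1/2})$; equivalently, and more self-contained, one can bootstrap on consecutive subintervals of length of order $C_1^{-2}$, on each of which the prefactor $C_1\sqrt\pi\,(r-r_0)^{1/2}$ multiplying $\sup_{s\le r}(s-r_0)^{1/2}\phi(s)$ is at most $\tfrac12$. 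The computation that keeps all this uniform in $r$ is the Beta integral $\int_{r_0}^{r}(r-s)^{-1/2}(s-r_0)^{-1/2}\,ds=\pi$, together with $\int_{r_0}^r(s-r_0)^{-1/2}\,ds\le 2\sqrt{r_2-r_0}$; these render every term in the resulting bound an explicit expression in $C_0,C_1,C_2$ (and $r_0,r_2$) that is nonnegative, continuous, and nondecreasing in $(C_0,C_1,C_2)$. Restricting finally to $r\in[r_1,r_2]$ replaces the only singular term $C_0(\pi(r-r_0))^{-1/2}$ by the bounded quantity $C_0(\pi(r_1-r_0))^{-1/2}$, and collecting the bounds gives $\sup_{(r,x)\in[r_1,r_2]\times\mathbb R}|\partial_x u(r,x)|\le F(C_0,C_1,C_2)$ for a continuous $F$ depending only on $r_0,r_1,r_2$.

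The main obstacle is the $(r-r_0)^{-1/2}$ singularity in both the kernel and the inhomogeneity $a$, which precludes a direct application of the classical Gronwall lemma; overcoming it requires the singular-kernel version of Gronwall's inequality (or the short-interval bootstrap above), and in either case rests on the Beta integral identity that makes the iterated kernels integrable uniformly in the time endpoint. A secondary point is the justification of the Duhamel representation for merely bounded $g$ and $L^\infty$ coefficients: I would handle this by constructing the bounded solution $\tilde u$ from the formula and then invoking parabolic uniqueness in the linear-growth class, where the hypothesis that $r\mapsto\|\partial_x u(r,\cdot)\|_\infty$ is continuous on $[r_0,r_2]$ is exactly what guarantees $u$ lies in that class.
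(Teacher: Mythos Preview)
Your proposal is correct and follows essentially the same route as the paper: Duhamel representation with the heat kernel, differentiation in $x$ using $\int_{\mathbb R}|\partial_xG(t,x)|\,dx=(\pi t)^{-1/2}$, and closure via a singular Gronwall inequality relying on the Beta integral $\int_{r_0}^r(r-s)^{-1/2}(s-r_0)^{-1/2}\,ds=\pi$. Your write-up is in fact more careful than the paper's in two respects: you justify the Duhamel identity via parabolic uniqueness in the linear-growth class (the paper simply invokes Duhamel), and you flag explicitly that the Gronwall step is the singular-kernel version, whereas the paper just writes ``from the Gronwall inequality'' and displays the iterated-kernel bound directly.
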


\begin{proof} From the Duhamel principle,
	\begin{align*}
	u(r,x)&=P_{r-r_0 }g(x)+\int_{r_0 }^rP_{r-w}(\kappa_1(w,\cdot)\partial_xu(w,\cdot)+\kappa_2(w,\cdot))(x)dw,
	\end{align*}
	where for any $f\in L^\infty(\mathbb{R})$,
	$$
	P_{a}f(x):=\frac{1}{\sqrt{4\pi a}}\int_{\mathbb{R}}f(y)e^{-\frac{(x-y)^2}{4a}}dy.
	$$
	A direct computation leads  to
	\begin{align*}
	\partial_xu(r,x)&=\partial_x(P_{r-r_0 }g(x))+\int_{r_0 }^r\partial_x\bigl(P_{r-w}(\kappa_1(w,\cdot)\partial_xu(w,\cdot)+\kappa_2(w,\cdot))(x)\bigr)dw.
	\end{align*}
	Note that
	\begin{align*}
	\partial_x(P_{r-r_0 }g(x))&=\frac{-1}{2(r-r_0 )\sqrt{4\pi (r-r_0 )}}\int_{\mathbb{R}}(x-y)g(y)e^{-\frac{(x-y)^2}{4(r-r_0 )}}dy,
	\end{align*}
	from which
	\begin{align*}
	|\partial_x(P_{r-r_0 }g(x))|&\leq \frac{C_0}{2(r-r_0 )\sqrt{4\pi (r-r_0 )}}\int_{\mathbb{R}}|x-y|e^{-\frac{(x-y)^2}{4(r-r_0 )}}dy=\frac{C_0}{\sqrt{\pi(r-r_0 ) }}.
	\end{align*}
	A similar computation also yields  that
	\begin{align*}
	\int_{r_0 }^r\partial_x\bigl(P_{r-w}(\kappa_1(w,\cdot)\partial_xu(w,\cdot)+\kappa_2(w,\cdot))(x)\bigr)dw&\leq \int_{r_0 }^r\frac{C_1\|\partial_xu(w,\cdot)\|_\infty+C_2}{\sqrt{\pi(r-w)}} dw.
	\end{align*}
	As a result,
	\begin{align*}
	\|\partial_xu(r,\cdot)\|_\infty&\leq \phi_0(r)+C_1\int_{r_0 }^r\frac{\|\partial_xu(w,\cdot)\|_\infty}{\sqrt{\pi(r-w)}} dw,
	\end{align*}
	where $$
	\phi_0(r):=\frac{C_0}{\sqrt{\pi(r-r_0 )}}+\frac{2C_2\sqrt{r-r_0 }}{\sqrt{\pi}}.
	$$
	From the Gronwall inequality,
	\begin{align*}
	\|\partial_xu(r,\cdot)\|_\infty&\leq \phi_0(r)+\int_{r_0 }^r\frac{C_1\phi_0(w)}{\sqrt{\pi(r-w)}}\exp\Bigl(\int_{w}^r\frac{dl}{\sqrt{\pi(r-l)}}\Bigr)dw\\
	&=\phi_0(r)+\int_{r_0 }^r\frac{C_1\phi_0(w)}{\sqrt{\pi(r-w)}}\exp\Bigl(2\sqrt{\frac{r-w}{\pi}}\Bigr)dw\\
	&\leq \phi(r),
	\end{align*}
	where
	$$
	\phi(r):=\phi_0(r)+\frac{C_1}{\pi}\exp\Bigl(2\sqrt{\frac{r_2-r_0 }{\pi}}\Bigr)\bigl(\pi C_0+4C_2\sqrt{r_2-r_0 }\bigr),
	$$
	and the last inequality was obtained by using $\int_{r_0 }^r1/\sqrt{(r-w)(w-r_0 )}dw =\pi.$ This finishes our proof.
\end{proof}

With the help of Lemma \ref{pde:lem}, we obtain some controls on the spacial derivatives of $\Phi_\gamma.$

\begin{lemma}\label{pde:lem3}
	Let $s_1\in(0,1)$. For any $\gamma\in \mathcal{U}_d,$
	\begin{align}
	\label{pde:prop2:eq0}
	\sup_{(s,x)\in[0,1)\times\mathbb{R}}|\partial_x\Phi_\gamma(s,x)|\leq 1
	\end{align}
	and for $k\geq 2$
	\begin{align}\label{pde:prop2:eq1}
	\sup_{(s,x)\in[0,s_1]\times\mathbb{R}}|\partial_x^k\Phi_\gamma(s,x)|\leq F_k(\gamma(s_1)),
	\end{align}
	where $F_k$ is nonnegative continuous on $[0,\infty)$ depending on $s_1$ only and independent of $\gamma.$
\end{lemma}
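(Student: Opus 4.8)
The plan is to use the explicit structure of $\Phi_\gamma$ for atomic $\gamma$ and then bootstrap the higher $x$-derivatives on $[0,s_1]$ through the a priori estimate of Lemma~\ref{pde:lem}. By the $L^1$-stability \eqref{lip} it suffices to treat $\gamma$ induced by a measure with finitely many atoms (so that $\gamma(1-)<\infty$ and the Cole--Hopf picture below is legitimate); the general atomic case then follows by approximation together with Proposition~\ref{pde:prop1}$(iii)$ and the continuity of the $F_k$'s. For such $\gamma$, let $0\le t_0<\cdots<t_m<1$ be its atoms and $\gamma\equiv c_j$ on $[t_j,t_{j+1})$, with $t_{m+1}:=1$. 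On each interval the substitution $u=e^{c_j\Phi_\gamma}$ (when $c_j>0$), or $u=\Phi_\gamma$ (when $c_j=0$), converts \eqref{pde} into the backward heat equation $\partial_s u=-\tfrac{\xi''(s)}{2}\partial_{xx}u$, whose solution is a Gaussian convolution of its terminal datum. Running this recursion downward from $s=1$ builds $\Phi_\gamma$; since (by Step~2) each terminal datum is $1$-Lipschitz in $x$, the exponentials $e^{c_j\Phi_\gamma(t_{j+1},\cdot)}$ are integrable against the heat kernel, so $\Phi_\gamma$ is $C^\infty$ in $x$ on $[0,1)\times\mathbb{R}$, all $\partial_x^k\Phi_\gamma$ are continuous there and match across the atoms, each $\partial_x^k\Phi_\gamma$ is a classical solution of the PDE obtained by differentiating \eqref{pde} $k$ times in $x$, and $s\mapsto\|\partial_x^k\Phi_\gamma(s,\cdot)\|_\infty$ is finite and continuous on $(0,1)$.

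\smallskip\noindent\emph{Step 2 (the bound \eqref{pde:prop2:eq0}).} Differentiating \eqref{pde} once, $w:=\partial_x\Phi_\gamma$ solves $\partial_s w=-\tfrac{\xi''}{2}\bigl(\partial_{xx}w+2\gamma\,w\,\partial_x w\bigr)$, a linear backward advection--diffusion equation with no zeroth-order term; by the parabolic maximum principle $\|w(s,\cdot)\|_\infty$ does not increase as $s$ decreases, while the first heat step turns the terminal value $\partial_x|x|=\operatorname{sign}$ into a smooth function of supremum norm $1$ (its $x$-derivative being a Gaussian convolution of $\operatorname{sign}$), whence $\|w(s,\cdot)\|_\infty\le 1$ on $[0,1)$. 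Equivalently, Cole--Hopf exhibits $\partial_x\Phi_\gamma$ as a ratio $\mathbb{E}[\operatorname{sign}(\cdot)\,e^{(\cdots)}]/\mathbb{E}[e^{(\cdots)}]$ of modulus $\le1$, which is exactly what justifies the Lipschitz claim used in Step~1; alternatively $\Phi_\gamma(s,\cdot)$ is convex (Lemma~\ref{lem4}) with slopes in $[-1,1]$ by oddness and Lemma~\ref{lem3}.

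\smallskip\noindent\emph{Step 3 (the bounds \eqref{pde:prop2:eq1}).} Pass to the time variable $\tau(s)=\tfrac12(\xi'(1)-\xi'(s))$, turning \eqref{pde} into the forward equation $\partial_\tau\widetilde\Phi=\partial_{xx}\widetilde\Phi+\widetilde\gamma(\partial_x\widetilde\Phi)^2$ with $\widetilde\Phi(0,\cdot)=|\cdot|$ and $\widetilde\gamma(\tau)=\gamma(s(\tau))$; the slab $s\in[0,s_1]$ corresponds to $\tau\in[\tau(s_1),\tau(0)]$, on which $\widetilde\gamma\le\gamma(s_1)$ since $\gamma$ is nondecreasing. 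Induct on $k\ge2$. Differentiating the $\tau$-equation $k-1$ times and writing $v_j=\partial_x^j\widetilde\Phi$,
\[
\partial_\tau v_{k-1}=\partial_{xx}v_{k-1}+\bigl(2\widetilde\gamma\,v_1\bigr)\partial_x v_{k-1}+\widetilde\gamma\sum_{j=1}^{k-2}\binom{k-1}{j}v_{j+1}\,v_{k-j},
\]
which has exactly the form treated in Lemma~\ref{pde:lem}, with first-order coefficient $\kappa_1=2\widetilde\gamma v_1$ (so $\|\kappa_1\|_\infty\le 2\gamma(s_1)$ by Step~2), inhomogeneity the displayed sum — note that the only term producing $v_k$, namely $2v_1v_k$, has been absorbed into $\kappa_1\partial_x v_{k-1}$, so the sum involves only the strictly lower-order derivatives $v_2,\dots,v_{k-1}$ — and datum $g=v_{k-1}$ taken at an initial time just inside $(s_1,1)$. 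For $k=2$ the sum is empty and $\|g\|_\infty\le1$ by Step~2; for $k\ge3$ the inductive bounds $F_2(\gamma(s_1)),\dots,F_{k-1}(\gamma(s_1))$ control $\|g\|_\infty$ and the inhomogeneity. Lemma~\ref{pde:lem} then bounds $\partial_x v_{k-1}=\partial_x^k\widetilde\Phi$ over $[0,s_1']$ for each $s_1'<s_1$ by a constant depending only on $\gamma(s_1)$ and $s_1$; letting $s_1'\uparrow s_1$ and using the continuity in $s$ from Step~1 yields \eqref{pde:prop2:eq1} with a continuous $F_k$.

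\smallskip\noindent\emph{Main obstacle.} The delicate point is the interaction with the singular boundary $s=1$: the datum $|x|$ is not differentiable there and $\gamma$ may be large near $1$, so the a priori estimate must be run strictly inside $(0,1)$, and one must verify that on $[0,s_1]$ only $\gamma(s_1)$ (through the bound $\widetilde\gamma\le\gamma(s_1)$) and $s_1$ — not the size of $\gamma$ near $1$ — survive in the final constants. This, together with organizing the induction so that each differentiated equation genuinely fits the hypotheses of Lemma~\ref{pde:lem} (in particular that its datum is already controlled), is the heart of the argument.
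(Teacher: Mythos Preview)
Your proposal is correct and follows essentially the same route as the paper: Cole--Hopf for atomic $\gamma$ to obtain smoothness and the bound $|\partial_x\Phi_\gamma|\le 1$, then the time change $\tau(s)=\tfrac12(\xi'(1)-\xi'(s))$ and an induction in $k$ in which the $(k-1)$-times differentiated equation is fed into Lemma~\ref{pde:lem} to control $\partial_x^k\Phi_\gamma$. Your write-up is in fact more explicit than the paper's in two places --- you spell out the differentiated equation and isolate the crucial point that the top-order product $2v_1v_k$ is absorbed into the drift $\kappa_1\partial_x v_{k-1}$, and you offer several independent justifications of \eqref{pde:prop2:eq0} --- but the architecture is identical.
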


\begin{proof}
	Assume that $\gamma\in \mathcal{U}_d.$ One can explicitly solve $\Phi_\gamma$ in the classical sense by performing the Cole-Hopf transformation. In fact, if $\gamma=\sum_{i=0}^{m}a_i1_{[q_{i},q_{i+1})}$ for some sequences
	\begin{align*}
	&0=q_0<q_1<\cdots<q_m<q_{m+1}=1,\\
	&0\leq a_0\leq a_1\leq \cdots\leq a_{m-1}\leq a_{m}<\infty,
	\end{align*}
	then for any $1\leq i\leq m,$
	\begin{align*}
	\Phi_{\gamma}(s,x)&=\frac{1}{a_i}\log \e\exp a_i\Phi_\gamma(q_{i+1},x+z\sqrt{\xi'(q_{i+1})-\xi'(s)})
	\end{align*}
	for all $(s,x)\in [q_i,q_{i+1})\times\mathbb{R},$ where $z$ is a standard normal random variable. Using the initial condition $\Phi_\gamma(1,x)=|x|$ and an iteration argument, it can be easily checked that for any $k\geq 0$, $\partial_x^k\Phi\in C([0,1)\times\mathbb{R})$ and for any $k\geq 0$ and $s_0\in(0,1)$, $\partial_s\partial_x^k\Phi(s,\cdot) \in C(\mathbb{R})$. Furthermore, it can be verified that \eqref{pde:prop2:eq0} holds, and for any $s_0\in(0,1)$, there exists a constant $C$ such that $$
	\sup_{x\in\mathbb{R}}|\partial_{x}^k\Phi_\gamma(s,x)-\partial_x^k\Phi_\gamma(s',x)|\leq C|s-s'|,\,\,\forall s,s'\in[0,1),\,x\in\mathbb{R},\,\,k\geq 2.$$
	This inequality implies that $s\mapsto \|\partial_x^k\Phi_\gamma(s,\cdot)\|_\infty$ is a continuous function for $s\in[0,1).$ Now define $\zeta(s)=(\xi'(1)-\xi'(s))/2$. Fix $0<s_1<s_0<1$. Set $r_0=\zeta(s_0)$, $r_1=\zeta(s_1)$, and $r_2=\zeta(0)$. Evidently, the function $u(r,x):=\partial_{x}\Phi_\gamma(\zeta^{-1}(r),x)$ satisfies
	\begin{align*}
	\partial_ru(r,x)&=\partial_{xx}u(r,x)+\kappa_1(r,x)\partial_xu(r,x)+\kappa_2(r,x)
	\end{align*}
	for $(r,x)\in [r_0,r_2]\times\mathbb{R}$ with initial condition $\partial_xu(r_0,x)=\partial_x\Psi_\gamma(\zeta^{-1}(r_0),x),$ where
	\begin{align*}
	\kappa_1(r,x)&:=\gamma(\zeta^{-1}(r))\partial_xu(r,x),\\
	\kappa_2(r,x)&:=0.
	\end{align*}
	Since $\|\kappa_1\|_\infty\leq \gamma(\zeta^{-1}(r_0))<\infty$, we can apply Lemma \ref{pde:lem} to get that
	\begin{align*}
	\sup_{(s,x)\in[0,s_1]\times\mathbb{R}}|\partial_x\Phi_\gamma(s,x)|&=\sup_{(r,x)\in[r_1,r_2]\times\mathbb{R}}|\partial_xu(r,x)|\\
	&\leq F(1,\gamma(\zeta^{-1}(r_1)),0)\\
	&= F(1,\gamma(s_1),0),
	\end{align*}
	where $F$ is a nonnegative continuous function on $[0,\infty)^3$ depending only on $s_1.$ Letting $F_1(y)=F(1,y,0)$ gives \eqref{pde:prop2:eq1} with $k=1.$ For $k\geq 2,$ note that
	\begin{align*}
	 \partial_t\bigl(\partial_x^k\Phi_\gamma(s,x)\bigr)&=\frac{\xi''(s)}{2}\Bigl(\partial_{xx}\bigl(\partial_x^k\Phi_\gamma(s,x)\bigr)+2\gamma(s)\partial_x\Phi_\gamma(s,x)\bigl(\partial_x^{k+1}\Phi_\gamma(s,x)\bigr)+K(s,x)\Bigr),
	\end{align*}
	where $K(s,x)$ is the sum of products of spatial derivatives of $\Phi_\gamma$ of order at most $k.$ One may use a similar argument as the case $k=1$ together with an induction procedure to obtain the proof of \eqref{pde:prop2:eq1} for all $k\geq 2$.
\end{proof}

Following a similar definition of the weak solution for the original Parisi PDE in Jagannath-Tobasco \cite{JT2}, we define the weak solution of the Parisi PDE \eqref{pde} as follows.

\begin{definition}[Weak solution]
Let $\gamma \in \mathcal{U}.$ Let $\Phi$ be a continuous function on $[0,1]\times \mathbb{R}$ with essentially bounded weak derivative $\partial_x\Phi.$ We say that $\Phi$ is a weak solution to
	\begin{align*}
	\partial_s\Phi (s,x)&=-\frac{\xi''(s)}{2}\bigl(\partial_{xx}\Phi (s,x)+\gamma(s)\bigl(\partial_x\Phi (s,x)\bigr)^2\bigr)
	\end{align*}
	on $[0,1)\times\mathbb{R}$ with $\Phi(1,x)=|x|$ if
	\begin{align*}
	\int_0^1\int_{\mathbb{R}}\Bigl(-\Phi\partial_s \phi+\frac{\xi''(s)}{2}\Bigl(\Phi\partial_{xx}\phi+\gamma(s)\bigl(\partial_{x}\Phi\bigr)^2\phi\Bigr)\Bigr)dxds+\int_{\mathbb{R}}\phi(1,x)|x|dx=0
	\end{align*}
	for all smooth $\phi$ on $(0,1]\times\mathbb{R}$ with compact support.
	
\end{definition}

\begin{proof}[\bf Proof of Proposition \ref{pde:prop1}]
	Let us pick any $(\gamma_n)_n\subset\mathcal{U}_d$ with weak limit $\gamma.$ From the estimates in Lemma \ref{pde:lem3}, one readily sees that for any $k\geq 0,$ the sequence $(\partial_x^k\Phi_{\gamma_n})_{n\geq 1}$ is equicontinuous and uniformly bounded on $[0,s_1)\times\mathbb{R}$ for all $s_1\in(0,1)$. Thus, from the Arzela-Ascoli theorem combined with a diagonal process, on any $[0,s_1]\times[-M,M],$ $(\partial_x^k\Phi_{\gamma_n})_{n\geq 1}$ converges uniformly on any compact subset of $[0,1)\times\mathbb{R}$ for any $k\geq 0$. Here, without loss of generality, we use the same sequence $(\gamma_n)_{n\geq 1}$ instead of adapting a subsequence for notational convenience. The above discussion makes the following well-defined,
	$$
	\Phi_{\gamma}(s,x):=\lim_{n\rightarrow\infty}\Phi_{\gamma_n}(s,x)
	$$
	for $(s,x)\in[0,1]\times\mathbb{R}.$ Note that for $s\in[0,1)$, $\Phi_{\gamma}(s,x)$ is differentiable in $x\in\mathbb{R}$ and satisfies $\|\partial_x\Phi_\gamma(s,\cdot)\|_\infty<\infty$. Since $\Phi_{\gamma_n}$ satisfies
	\begin{align*}
	\partial_s\Phi_{\gamma_n}(s,x)&=-\frac{\xi''(s)}{2}\bigl(\partial_{xx}\Phi_{\gamma_n}(s,x)+\gamma_n(s)\bigl(\partial_x\Phi_{\gamma_n}(s,x)\bigr)^2\bigr)
	\end{align*}
	for $(s,x)\in[0,1)\times\mathbb{R}$ with boundary condition $\Phi_{\gamma_n}(1,x)=|x|,$ passing to the limit gives
	\begin{align*}
	\int_0^1\int_{\mathbb{R}}\Bigl(-\Phi_{\gamma}\partial_s \phi+\frac{\xi''(s)}{2}\Bigl(\Phi_{\gamma}\partial_{xx}\phi+\gamma(s)\bigl(\partial_{x}\Phi_\gamma\bigr)^2\phi\Bigr)\Bigr)dxds+\int_{\mathbb{R}}\phi(1,x)|x|dx=0
	\end{align*}
	for all smooth functions $\phi$ on $(0,1]\times\mathbb{R}$ with compact support. Therefore, $\Phi_\gamma$ exists in the weak sense and the fact that it satisfies the Lipschitz property \eqref{lip} follows by applying \eqref{MaxFormula} and noting that $|x|$ has Lipschitz constant $1.$ Note that this Lipschitz property also implies that the definition of $\Phi_\gamma$ is independent of the choice of the sequence $(\gamma_n)_{n\geq 1}$. To see the uniqueness of $\Phi_\gamma$, it can be obtained via a fixed point argument identical to \cite[Lemma 13]{JT2}. These together give $(i)$. The above discussion and Lemma \ref{pde:lem3} imply $(ii)$. Since one can also pick this sequence $(\gamma_n)$ from $\mathcal{U}_c$ and perform a similar procedure as above, we also obtain $(iii)$.
\end{proof}


\bibliography{EnergyLandscape_arxiv}
\bibliographystyle{plain}

\end{document}